\pgfplotsset{compat=newest}
\pgfplotsset{plot coordinates/math parser=false}
\newlength\densfnfwidth
\newlength\spinfnfwidth
\newlength\temperfnfwidth
\newlength\denslayerswidth
\newlength\spinlayerswidth
\newlength\temperlayerswidth
\let\pa\partial  
\let\na\nabla  
\let\eps\varepsilon  
\newcommand{\R}{{\mathbb R}} 
\newcommand{\C}{{\mathbb C}} 
\newcommand{\diver}{\operatorname{div}}  
\newcommand{\J}{{\mathcal J}}
\newcommand{\DD}{\mathrm{D}}
\newcommand{\dom}{{\mathcal D}}
\newtheorem{theorem}{Theorem}   
\newtheorem{lemma}[theorem]{Lemma}   
\newtheorem{proposition}[theorem]{Proposition}   
\newtheorem{remark}[theorem]{Remark}
\begin{document}  

\title[Energy-transport models for spin transport]{Energy-transport models for 
spin transport in ferromagnetic semiconductors}

\author{Ansgar J\"ungel}
\address{A.J.: Institute for Analysis and Scientific Computing, Vienna University of  
	Technology, Wiedner Hauptstra\ss e 8--10, 1040 Wien, Austria}
\email{juengel@tuwien.ac.at} 

\author{Polina Shpartko}
\address{P.S.: Institute for Analysis and Scientific Computing, Vienna University of  
	Technology, Wiedner Hauptstra\ss e 8--10, 1040 Wien, Austria}
\email{polina.shpartko@tuwien.ac.at}

\author{Nicola Zamponi}
\address{N.Z.: Institute for Analysis and Scientific Computing, Vienna University of  
	Technology, Wiedner Hauptstra\ss e 8--10, 1040 Wien, Austria}
\email{nicola.zamponi@tuwien.ac.at}

\date{\today}

\thanks{The authors acknowledge partial support from   
the Austrian Science Fund (FWF), grants P22108, P24304, and W1245.} 

\begin{abstract}
Explicit energy-transport equations for the spinorial carrier transport
in ferromagnetic semiconductors are calculated from a general spin energy-transport
system that was derived by Ben Abdallah and El Hajj from a spinorial 
Boltzmann equation. The novelty of our approach are the
simplifying assumptions leading to explicit models which extend both spin
drift-diffusion and semiclassical energy-transport equations.
The explicit models allow us to examine the interplay between the spin and 
charge degrees of freedom. In particular, the monotonicity of the 
entropy (or free energy) and gradient estimates are shown for these models and
the existence of weak solutions to a time-discrete version of one of 
the models is proved, using novel truncation arguments. Numerical experiments
in one-dimensional multilayer structures using a finite-volume discretization 
illustrate the effect of the temperature and the polarization parameter.
\end{abstract}

\keywords{Spin transport, energy-transport equations, entropy inequalities,
existence of weak solutions, finite-volume method, semiconductors.}  
 
\subjclass[2000]{35J47, 35J60, 65M08, 82D37.}  

\maketitle

\tableofcontents


\section{Introduction}

Spintronics is a new emerging field in solid-state physics with the aim to
exploit the spin degree of freedom of electrons, which may lead to smaller and
faster semiconductor devices with reduced power consumption. The aim of
the mathematical modeling of spin-polarized materials is to develop a 
hierarchy of models that describe the relevant physical phenomena in an accurate 
way and, at the same time, allow for fast and
efficient numerical predictions. A model class which seems to fulfill the
requirements of precision and simplicity are moment equations derived from
the (spinorial) Boltzmann equation. 

In the literature, up to now,
mostly lowest-order moment equations for spin transport have been investigated,
namely spin drift-diffusion-type equations
\cite{ElH14,PSP05,PoNe11,Sai04}. These models are mathematically analyzed in
\cite{GaGl10,Gli08,JNS15,Zam14}. When hot electron
thermalization has to be taken into account, the carrier transport needs to be
described by higher-order moment equations including energy transport.
This leads to semiclassical energy-transport equations in semiconductors,
see, e.g., \cite{BeDe96,BDG96,CKRSD92,Jue09}. A spinorial energy-transport
model was derived in \cite{BeEl09}, but the equations are not explicit such that
its structure is not easy to analyze.
The goal of this paper is to derive and analyze simplified explicit 
versions of this model.

The starting point is the spinorial Boltzmann equation
for the distribution function $F(x,k,t)$ with values in the space of
Hermitian $2\times 2$ matrices,
\begin{equation}\label{1.be}
  \pa_t F + k\cdot\na_x F - \na_x V\cdot\na_k F = Q(F) 
	+ \frac{\mathrm{i}}{2}[\vec{\Omega}\cdot\vec{\sigma},F] + Q_{\rm sf}(F),
\end{equation}
where $x\in\R^3$ denotes the spatial variable, $k\in\R^3$ the wave vector, 
$t>0$ the time, $\mathrm{i}=\sqrt{-1}$ the imaginary unit, and $[\cdot,\cdot]$
the commutator. The function
$V(x,t)$ is the electric potential, which is usually self-consistently
defined as the solution of the Poisson equation
$$
  -\lambda_D^2\Delta V = n_0[F] - C(x), \quad 
	n_0[F] = \frac12\operatorname{tr}\int_{\R^3}Fdk,
$$
where $\lambda_D$ is the scaled Debye length, $n_0[F]$ the charge density,
``tr'' the trace of a matrix,  
and $C(x)$ the doping concentration \cite{Jue09}. Furthermore,
$\vec{\Omega}(x,k)$ is a local magnetization field and
$\vec{\sigma}=(\sigma_1,\sigma_2,\sigma_3)$ is the vector of the Pauli matrices.
We choose the spin-conserving BGK-type collision operator $Q(F)=M[F]-F$, where the
Maxwellian $M[F]$ is such that $Q(F)$ conserves mass and energy, and
the operator $Q_{\rm sf}(F)$ models spin-flip interactions. 
Details are given in Section \ref{sec.deriv}.

Assuming dominant collisions and a large time scale, moment equations
for the electron density $n[A,C]$ and energy density $W[A,C]$ can be derived
from \eqref{1.be} in the diffusion limit \cite{BeEl09}, leading to
\begin{equation}\label{1.et}
\begin{aligned}
  \pa_t n[A,C] + \diver J_n &= F_n[\vec{\Omega},A,C], \\
	\pa_t W[A,C] + \diver J_W + J_n\cdot\na V &= F_W[\vec{\Omega},A,C],
	\quad x\in\R^3,\ t>0,
\end{aligned}
\end{equation}
where $J_n$ and $J_W$ are the particle and energy flux, respectively, and
$F_n$, $F_W$ are some functions; we refer to Section \ref{sec.deriv} for details.
Furthermore, $A$ and $C$ are the Lagrange
multipliers which are obtained from entropy maximization 
under the constraints of given mass and energy, and
the electron and energy densities are the zeroth- and second-order moments
$$
  n[A,C] = \int_{\R^3}M[A,C]dk, \quad W[A,C] = \frac12\int_{\R^3}M[A,C]|k|^2 dk,
$$
where $M[A,C]=\exp(A+C|k|^2/2)$ is the
spinorial Maxwellian. Note that $A$ and $C$ are Hermitian matrices in
$\C^{2\times 2}$, so $n[A,C]$ and $W[A,C]$ are Hermitian matrices too. 

In contrast to the semiclassical situation, 
the densities cannot be expressed explicitly in terms of the
Lagrange multipliers because of the matrix structure. 
In order to obtain explicit equations, we need to impose 
simplifying assumptions on $A$ and $C$. 
Our strategy is to first formulate the variables in terms 
of the Pauli basis,
$$
  A = a_0\sigma_0 + \vec{a}\cdot\vec{\sigma}, \quad
	C = c_0\sigma_0 + \vec{c}\cdot\vec{\sigma},
$$
where $\sigma_0$ is the unit matrix and $a_0$, $c_0\in\R$, 
$\vec{a}$, $\vec{c}\in\R^3$. The densities may be expanded in
this basis as well, 
$n[A,C] = n_0\sigma_0 + \vec{n}\cdot\vec{\sigma}$, 
$W[A,C] = W_0\sigma_0 + \vec{W}\cdot\vec{\sigma}$,
and the Maxwellian becomes
\begin{equation}\label{1.max}
  M[A,C] = e^{a_0+c_0|k|^2/2}\bigg(\cosh|\vec{b}(k)|\sigma_0
	+ \frac{\sinh|\vec{b}(k)|}{|\vec{b}(k)|}\vec{b}(k)\cdot\vec{\sigma}\bigg),
	\quad \vec{b}(k) := \vec{a}+\vec{c}\frac{|k|^2}{2}.
\end{equation}
The formulation of the energy-transport model \eqref{1.et} in terms of the
Pauli components $(a_0,\vec{a})$, $(c_0,\vec{c})$ still leads to nonexplicit
equations, so we will impose some conditions. We will derive three model
classes by assuming $\vec{c}=0$, $\vec{a}=0$, or $\vec{a}=\lambda\vec{c}$
for some $\lambda=\lambda(x,t)$ and show the following results:

\begin{itemize}
\item First model class ($\vec{c}=0$): 
we discretize the one-dimensional equations using a semi-implicit Euler 
finite-volume scheme
and illustrate the effect of the temperature on two multilayer structures.
\item Second model class ($\vec{a}=0$): we show the existence of weak solutions 
to a time-discrete version. 
\item Third model class ($\vec{a}=\lambda\vec{c}$): we show that the equation
for the spin accumulation density $\vec{s}=\vec{n}/|\vec{n}|$ has some similarities
with the Landau-Lifshitz equation.
\item All model classes: we compute the entropy (free energy) and the
entropy production, thus providing not only the monotonicity of the entropy
but also gradient estimates.
\end{itemize}

These findings are a first step to understand higher-order spinorial
macroscopic models which may lead to improved simulation outcomes.

The paper is organized as follows. The main results are detailed in Section
\ref{sec.main}. The derivation of the general energy-transport
model from the spinorial Boltzmann equation is recalled in Section 
\ref{sec.deriv} and the general model is formulated in terms of the Pauli components
in Section \ref{sec.pauli}. In Section \ref{sec.simpl}, the three simplified 
energy-transport model classes are derived. 
The entropy structure is investigated in Section \ref{sec.ent}, and the
existence result for the second model is stated and proved in Section 
\ref{sec.ex}. Some numerical experiments for the first model are performed
in Section \ref{sec.num}.


\section{Main results}\label{sec.main}

We detail the main results of this paper.

\subsection{Derivation of explicit spin energy-transport models}
We derive explicit versions of \eqref{1.et} under three simplifying 
assumptions on the Pauli components of $A$ and $C$.

\subsubsection*{First model: $\vec{c}=0$.} 
If the Lagrange multiplier $C$
is interpreted as a ``temperature'' tensor, it might be reasonable to
suppose that the ``spin'' part $\vec{c}$ is much smaller than the 
non-vanishing trace part $c_0$, which motivates the simplification $\vec{c}=0$.
This allows us to write three of the eight scalar moments $(n_0,\vec{n})$ and
$(W_0,\vec{W})$ in terms of the remaining moments, leading to equations
for five moments. We choose the moments $(n_0,\vec{n},W_0)$, leading to 
the system (see Section \ref{sec.et1})
\begin{align}
  & \pa_t n_0 + \diver J_n = 0, \quad J_n = -\big(\na(n_0T) + n_0\na V\big), 
	\label{1.et1.n0} \\
	& \frac32\pa_t(n_0T) + \diver J_W + J_n\cdot\na V = 0, \quad
	J_W = -\frac52\big(\na(n_0T^2) + n_0T\na V\big), \label{1.et1.n0T} \\
	& \pa_t \vec{n} - \sum_{j=1}^3\pa_{x_j}\big(\pa_{x_j}
	(\vec{n}T)+\vec{n}\pa_{x_j}V\big)
	+ \vec{\Omega}_{\rm e}\times\vec{n} = -\frac{\vec{n}}{\tau_{\rm sf}},
	\quad x\in\R^3,\ t>0, \label{1.et1.vecn}
\end{align}
where $T=2W_0/(3n_0)$ is interpreted as the electron temperature, 
$\pa_{x_j}=\pa/\pa x_j$, $\vec{\Omega}_{\rm e}$ is the even part of the
effective field (with respect to $k$), and
$\tau_{\rm sf}>0$ is the spin-flip relaxation time. 
In this model, $(n_0,\frac32 n_0T_0)$ solves the semiclassical energy-transport
equations, and the spin-vector density $\vec{n}$ solves a drift-diffusion-type
equation, which is coupled to the equations for $(n_0,\frac32 n_0T_0)$ via $T$ only.
Our numerical experiments indicate that this coupling is rather weak.

Motivated from \cite{PoNe11}, we may include a polarization matrix $P$ in the
definition of the collision operator $Q(F)$. We choose $Q(F)=P^{1/2}(M[F]-F)P^{1/2}$,
where the direction of $P=\sigma_0+p\vec{\Omega}\cdot\vec{\sigma}$ in spin space
is the local magnetization $\vec\Omega$ and $p\in[0,1)$ represents the spin 
polarization of the scattering rates. This operator conserves spin, mass,
and (in contrast to the operators in \cite{PoNe11}) energy.
The corresponding spin energy-transport model (still under the assumption 
$\vec{c}=0$) becomes (see Remark \ref{rem.et1})
\begin{align}
  & \pa_t n_0 + \diver\J_n = 0, \quad 
	\J_n = \eta^{-2}\big(J_n - p\vec{\Omega}\cdot\vec{J}_n\big), \label{1.et1a.n0} \\
	& \frac32\pa_t(n_0T) + \diver\J_W + \J_n\cdot\na V=0, \quad
	\J_W = \eta^{-2}\big(J_W - p\vec{\Omega}\cdot\vec{J}_W\big), \label{1.et1a.n0T} \\
	& \pa_t\vec{n} + \diver\vec{\J} + \vec{\Omega}_{\rm e}\times\vec{n}
	= -\frac{\vec{n}}{\tau_{\rm sf}}, \quad x\in\R^3,\ t>0, \label{1.et1a.vecn}
\end{align}
where $\eta=\sqrt{1-p^2}$, $J_n$, $J_W$ are as above, and
\begin{align*}
  \vec{J}_n &= -\big(\na(\vec{n}T) + \vec{n}\na V\big), \quad
  \vec{J}_W = -\frac52\big(\na(\vec{n}T^2) + \vec{n}\na V\big), \\
  \vec{\J} &= \eta^{-2}\big((1-\eta)(\vec{J}_n\cdot\vec{\Omega})\vec{\Omega}
	+ \eta\vec{J}_n - p\vec{\Omega}J_n \big).
\end{align*}
Note that we recover the model \eqref{1.et1.n0}-\eqref{1.et1.vecn} if $p=0$.
We compare both models numerically in Section \ref{sec.num}.
It turns out that the polarization matrix $P$ leads to a stronger mixing of
the spin density components, and the heat flux effects causes a smoothing of
these components.

\subsubsection*{Second model: $\vec{a}=0$.} 
The Lagrange multiplier $A$ may be related to the particle density.
Supposing that the spin effects are rather encoded in $\vec{c}$, one may
assume that $\vec{a}=0$. This condition gives as above three constraints and leads
to equations for five moments. One may choose, for instance, the variables
$(n_0,\vec{n},T)$ or $(n_0,T,\vec{W})$. In the former case,
we arrive at the system of coupled equations
\begin{align}
  & \pa_t n_0 + \diver J_n = 0, \quad J_n = -\big(\na(n_0T) + n_0\na V\big), 
	\label{1.et2.n0} \\
  & \frac32\pa_t(n_0T) + \diver J_W + J_n\cdot\na V = 0, \quad
	J_W = -\frac52\big(\na(D(n_+,n_-)n_0T^2) + n_0T\na V\big), \label{1.et2.n0T} \\
	& \pa_t\vec{n} - \sum_{j=1}^3\pa_{x_j}\bigg(\pa_{x_j}
	\bigg(p(n_+,n_-)n_0T\frac{\vec{n}}{|\vec{n}|}\bigg)+\vec{n}\pa_{x_j}V\bigg)
	+ \vec{\Omega}_{\rm e}\times\vec{n} = -\frac{\vec{n}}{\tau_{\rm sf}},
	\label{1.et2.vecn}
\end{align}
and $D(n_+,n_-)$, $p(n_+,n_-)$, defined in \eqref{32.Dp}, depend on the
spin-up/spin-down densities $n_\pm:=n_0\pm|\vec{n}|$ (see Section \ref{sec.et2}).
Compared to the first model, these coefficients realize a coupling between the
charge and spin-vector densities. A similar model can be derived in the variables
$(n_0,T,\vec{W})$. This coupling is still rather weak
since the function $D(n_+,n_-)$ only takes values in the interval $[1,1.1]$; see
Remark \ref{rem.D}.

\subsubsection*{Third model: $\vec{a}=\lambda\vec{c}$.} 
Generalizing the above approaches, we suppose that the vectors
$\vec{a}$ and $\vec{c}$ are aligned such
that $\vec{a}=\lambda\vec{c}$ for some function $\lambda=\lambda(x,t)\neq 0$.
The first model is recovered for $\lambda\to \infty$, the second one for
$\lambda=0$. This condition
provides only two constraints such that we obtain a system for six moments.
A possible choice is $(n_\pm,W_\pm,\vec{s})$, where
$n_\pm=n_0\pm|\vec{n}|$, $W_\pm=W_0\pm|\vec{W}|$, and $\vec{s}=\vec{n}/|\vec{n}|$,
which gives the equations
\begin{align}
  & \pa_tn_\pm + \diver J_{n,\pm} = \mp\frac{n_+ - n_-}{2\tau_{\rm sf}}
	\mp \frac12(n_+T_+ - n_-T_-)|\na\vec{s}|^2, \label{1.et3.npm} \\
  & \frac32\pa_t(n_\pm T_\pm) + \diver J_{W,\pm} + J_{n,\pm}\cdot\na V
	= \mp\frac{3}{4\tau_{\rm sf}}(n_+T_+ - n_-T_-) \label{1.et3.nT} \\
	&\phantom{\frac32\pa_t(n_\pm T_\pm) + \diver J_{W,\pm} + J_{n,\pm}\cdot\na V
	=}{}\mp \frac54(n_+T_+^2 - n_-T_-^2)|\na\vec{s}|^2, \nonumber \\
	& \pa_t\vec{s} - \frac{n_+T_+ - n_-T_-}{n_+ - n_-}\vec{s}\times
	(\Delta\vec{s}\times\vec{s})
	= \bigg(2\frac{\na(n_+T_+ - n_-T_-)}{n_+ - n_-} + \na V\bigg)\cdot\na\vec{s}
	- \vec{\Omega}_{\rm e}\times\vec{s}, \label{1.et3.s}
\end{align}
with the spin-up/spin-down particle and heat fluxes
\begin{equation}\label{1.et3.J}
  J_{n,\pm} = -\big(\na(n_\pm T_\pm) + n_\pm\na V\big), \quad
	J_{W,\pm} = -\frac52\big(\na(n_\pm T_\pm^2) + n_\pm T_\pm\na V\big),
\end{equation}
and the spin-up/spin-down energy densities $W_\pm = \frac32n_\pm T_\pm$.
The evolution equations for the spin-up/spin-down densities 
are similar in structure as the first and second model.
For constant ``temperature'' $T_+=T_-=1$, we recover the two-component
spin drift-diffusion equations analyzed in \cite{Gli08}.
The coupling is realized through the spin-accumulation density
$\vec{s}$. The equation for $\vec{s}$ preserves the relation $|\vec{s}|=1$, and
the second-order term
$\vec{s}\times(\Delta\vec{s}\times\vec{s})$ also appears in the
Landau-Lifshitz equation \cite{LaLi35}; see Remark \ref{rem.LLG}.

\subsection{Entropy inequalities}

We prove that there exists an entropy
(or free energy) which is nonincreasing in time along solutions to the
corresponding equations.\footnote{In contrast to the physical notation, 
the mathematical entropy is defined here as the negative physical entropy.}
To simplify the computations, we neglect electric effects, 
i.e., the potential $V$ is assumed to be constant (also see Remark \ref{rem.ent1}
for the general situation). 

The kinetic entropy of the general spin model \eqref{1.et} is given by
\begin{equation}\label{1.H}
  H = \int_{\R^3}\int_{\R^3}\operatorname{tr}(M\log M)dkdx,
\end{equation}
where the Maxwellian is defined by \eqref{1.max} and ``tr'' denotes the 
trace of a matrix. It was shown in \cite[Theorem 2.2]{BeEl09} 
that the entropy is nonincreasing along solutions to \eqref{1.et}.
Our aim is to quantify the entropy production $-dH/dt$
which provides gradient estimates. 
To this end, we insert the simplifying Maxwellians in \eqref{1.H} and
compute explicit expressions for the entropies. Denoting by $H_j$ the entropy
of the $j$th model presented above, we obtain
\begin{align}
  H_1 &= \int_{\R^3}\big(n_+\log(n_+T_+^{-3/2}) + n_-\log(n_-T_-^{-3/2})\big)dx, 
	\label{1.H1} \\
	H_2 &= \frac52\int_{\R^3}n_0\log\frac{n_0}{W_+^{3/5}+W_-^{3/5}}dx,
	\quad\mbox{where }W_\pm=\frac32n_0T\pm |\vec{W}|^2, \label{1.H2} \\
	H_3 &= H_1, \label{1.H3}
\end{align}
and the corresponding entropy inequalities read as (see Propositions 
\ref{prop.ent1}-\ref{prop.ent3})
\begin{align}
  \frac{dH_1}{dt} + 4\int_{\R^3}\big(|\na\sqrt{n_+ T}|^2 + |\na\sqrt{n_- T}|^2
	+ 5n_0|\na\sqrt{T}|^2\big)dx &\le 0, \nonumber \\
	\frac{dH_2}{dt} + c\int_{\R^3}\big(|\na \sqrt{W_+}|^2 + |\na \sqrt{W_-}|^2
	+ T|\na \sqrt{n_0}|^2\big) dx &\le 0, \label{1.dhdt2} \\
  \frac{dH_3}{dt} + c\int_{\R^3}\sum_{s=\pm}\big(T_s|\na\sqrt{n_s}|^2
	+ n_s|\na\sqrt{T_s}|^2\big)dx &\le 0, \nonumber
\end{align}
where $c>0$ is some number and the results hold for smooth solutions.

\subsection{Existence analysis for the second model}

The second analytical result concerns the existence analysis for the 
second model ($\vec{a}=0$) in the variables $(n_0,W_0,\vec{W})$, where
$W_0=\frac32n_0T$. Because
of the strong coupling, we are only able to prove the existence of solutions
to a time-discrete version without electric field in a bounded domain 
$\dom\subset\R^d$: 
\begin{align}
  \frac{1}{h}(n_0-n_0^0) - \frac23\Delta W_0 &= 0, \label{12.n0} \\
	\frac{1}{h}(W_0-W_0^0) - \frac{8}{15}\Delta\bigg(\frac{1}{n_0}
	(W_+^{3/5}+W_-^{3/5})(W_+^{7/5}+W_-^{7/5})\bigg) &= 0, \label{12.w0} \\
	\frac{1}{h}(\vec{W}-\vec{W}^0) - \frac{5}{18}\Delta\bigg(\frac{1}{n_0}
	(W_+^{3/5}+W_-^{3/5})(W_+^{7/5}-W_-^{7/5})\frac{\vec{W}}{|\vec{W}|}\bigg)
	&= -\frac{\vec{W}}{\tau_{\rm sf}}\quad \mbox{in }\dom, \label{12.vecw}
\end{align}
where $(n_0,W_0,\vec{W})$ is the solution at the actual time, 
$(n_0^0,W_0^0,\vec{W}^0)$ is the solution at the previous time instant, and
$h>0$ is the time step size;
see Theorem \ref{thm.ex}. The boundary conditions are given by
\begin{equation}\label{12.bc}
  n_0 = n_0^D, \quad W_0 = W_0^D, \quad \vec{W}=\vec{W}^D\quad\mbox{on }\pa\dom.
\end{equation}

The main difficulty in the existence proof is the derivation of 
suitable a priori estimates. The entropy-production inequality \eqref{1.dhdt2}
provides estimates which are uniform in $h$, but they are not sufficient to
pass to the limit $h\to 0$ since the gradient estimate \eqref{1.dhdt2}
for $\na\sqrt{n_0}$ becomes useless in regions where $T$ is close to zero.
 
Our proof employs some ideas from \cite{ZaJu15}.
The first idea is to formulate \eqref{12.n0}-\eqref{12.vecw} as
\begin{align*}
  n_0(u,v_0,\vec{v})-n_0^0 & = h\Delta u, \\
	W_0(u,v_0,\vec{v})-W_0^0 & = h\Delta v_0, \\
	\vec{W}(u,v_0,\vec{v})-\vec{W}^0 & = h\Delta \vec{v} 
	- (h/\tau_{\rm sf})\vec{W} \quad\mbox{in }\dom,
\end{align*}
where $(u,v_0,\vec{v})$ are some auxiliary variables. The second idea is
to truncate the new variables by replacing $u$ by $[u/v_0]_\eps v_0$, where
$[\cdot]_\eps$ is a truncation operator satisfying $[u/v_0]_\eps v_0=u$
for $0<u/v_0\le 1/\eps$. The existence of weak solutions
to the truncated problem is shown by means of the Leray-Schauder fixed-point
theorem. The compactness follows from standard $H^1$ elliptic estimates.
Then, choosing special Stampacchia-type test functions, we prove lower and
upper bounds for the new variables, which allow us to remove the truncation.
In this step, we exploit the particular structure of the equations.

Unfortunately, our a priori estimates depend on the time step size which
prevents the limit $h\to 0$. Even the analysis of the time-discrete equations
is highly delicate since the equations are elliptic in a non-standard sense.
The existence of weak solutions to the semiclassical energy-transport 
equations near equilibrium was proved in \cite{ChHs03,FaIt01,Gri99}.
An existence analysis for general initial data was shown in \cite{DGJ97}
but for uniformly positive definite diffusion matrices only.
A semiclassical energy-transport system without electric effects has been
investigated in \cite{ZaJu15}. This system possesses similar difficulties 
as \eqref{12.n0}-\eqref{12.vecw} but its structure is easier. 
For details, we refer to Section \ref{sec.ex}. 


\section{A general energy-transport model for spin transport}

\subsection{Derivation from the spinorial Boltzmann equation}\label{sec.deriv}

We sketch briefly the deri\-vation of the general energy-transport model \eqref{1.et}
from the spinorial Boltzmann transport equation \eqref{1.be}. Details are
given in \cite{BeEl09}. We consider the Boltzmann equation in the diffusion scaling,
\begin{equation}\label{2.be}
  \pa_t F_\eps + \frac{1}{\eps}\big(k\cdot\na_x F_\eps - \na_x V\cdot\na_k F_\eps\big)
	= \frac{1}{\eps^2}Q(F_\eps) 
	+ \frac{\mathrm{i}}{2}[\vec\Omega_\eps(x,k)\cdot\vec\sigma,F_\eps]
	+ Q_{\rm sf}(F_\eps),
\end{equation}
The parameter $\eps>0$ is the scaled mean free path and is supposed to be small.
We have assumed the parabolic-band approximation such that the mean velocity
equals $v(k)=k$. 

The last term in \eqref{2.be} represents the spin-flip interactions 
which are specified in \eqref{3.sf} below.
The commutator $[\cdot,\cdot]$ on the right-hand side
of \eqref{2.be} can be rigorously derived
from the Schr\"odinger equation with spin-orbit Hamiltonian in the semiclassical
limit \cite[Chapter~1]{ElH08}. The term models a precession effect around
the effective field \cite{BeEl09}. 

The first term on the right-hand side of \eqref{2.be} models
collisions that conserve mass and energy. For simplicity, we employ
the BGK-type operator (named after Bhatnagar, Gross, and Krook)
$Q(F) = M[F]-F$,
where the Maxwellian $M[F]$ associated to $F$ has the same mass and energy as $F$,
\begin{equation}\label{2.AC}
  \int_{\R^3}M[F]dk = \int_{\R^3}Fdk, \quad 
	\frac12\int_{\R^3}M[F]|k|^2dk = \frac12\int_{\R^3}F|k|^2dk.
\end{equation}
The Maxwellian is constructed from entropy maximization under the constraints of 
given mass and energy, which yields, in case of
Maxwell-Boltzmann statistics, the existence of Lagrange multipliers
$A(x,t)$ and $C(x,t)$ such that \cite{BeEl09}
$$
  M[F](x,k,t) = \exp\bigg(A(x,t)+C(x,t)\frac{|k|^2}{2}\bigg),
$$
where exp is the matrix exponential and 
$A$, $C$ are Hermitian $2\times 2$ matrices satisfying 
\eqref{2.AC}. 

The space of Hermitian $2\times 2$ matrices can be spanned by the
unit matrix $\sigma_0$ and the Pauli matrices 
$\vec\sigma=(\sigma_1,\sigma_2,\sigma_3)$,
$$
  \sigma_0 = \begin{pmatrix} 1 & 0 \\ 0 & 1 \end{pmatrix}, \quad
  \sigma_1 = \begin{pmatrix} 0 & 1 \\ 1 & 0 \end{pmatrix}, \quad	
  \sigma_2 = \begin{pmatrix} 0 & -\mathrm{i} \\ \mathrm{i} & 0 \end{pmatrix}, \quad
  \sigma_3 = \begin{pmatrix} 1 & 0 \\ 0 & -1 \end{pmatrix}.
$$
Accordingly, we may write $A=a_0+\vec{a}\cdot\vec\sigma$ and
$C=c_0+\vec{c}\cdot\vec\sigma$, where $a_0$, $c_0\in\R$,
$\vec{a}=(a_1,a_2,a_3)$, $\vec{c}=(c_1,c_2,c_3)\in\R^3$, and
$\vec{a}\cdot\vec{\sigma}=\sum_{j=1}^3 a_j\sigma_j$. The coefficients
in the Pauli basis are computed from $a_0=\frac12\operatorname{tr}(A)$,
$\vec{a}=\frac12\operatorname(\vec{\sigma}A)$, and similarly for $c_0$,
$\vec{c}$; see, e.g., \cite{PoNe11}. The matrix exponential can be also
expanded in the Pauli matrix, giving 
$M[F] = M_0\sigma_0 + \vec{M}\cdot\vec\sigma$, where
\begin{equation}\label{2.MPauli}
  M_0 = e^{a_0+c_0|k|^2/2}\cosh\bigg|\vec{a}+\vec{c}\frac{|k|^2}{2}\bigg|, \quad
	\vec{M} = e^{a_0+c_0|k|^2/2}\frac{\sinh
	|\vec{a}+\vec{c}|k|^2/2|}{|\vec{a}+\vec{c}|k|^2/2|}
	\bigg(\vec{a}+\vec{c}\frac{|k|^2}{2}\bigg).
\end{equation}

It is shown in \cite[Theorem 3.1]{BeEl09} that $F_\eps$ converges formally
to $M:=M[A,C]=\exp(A+C|k|^2/2)$ as $\eps\to 0$, 
where $(A,C)$ are solutions to the following
spin energy-transport system for the electron density $n(x,t)$ and energy
density $W(x,t)$, which are related to $(A,C)$ via the moment equations
$$
  n = \int_{\R^3}M[A,C]dk, \quad
	W = \frac12\int_{\R^3}M[A,C]|k|^2 dk.
$$
The general energy-transport equations read as \cite[Theorem~3.1]{BeEl09}
\begin{align}
  & \pa_t n + \diver_x J_n = \frac{\mathrm{i}}{2}\int_{\R^3}\big|\vec{\Omega}_{\rm ET}
	\cdot\vec\sigma,M\big]dk - \frac14\int_{\R^3}\big[\vec{\Omega}_{\rm o}
	\cdot\vec\sigma,[\vec{\Omega}_{\rm o}\cdot\vec\sigma,M]\big]dk \label{2.etn} \\
	&\phantom{xx}{}+ \int_{\R^3}Q_{\rm sf}(M)dk, \nonumber \\
	& \pa_t W + \diver_x J_W + J_n\cdot\na_x V
	= \frac{\mathrm{i}}{2}\int_{\R^3}\big[\vec{\Omega}_{\rm o}\cdot\vec\sigma,M\big]
	\frac{|k|^2}{2}dk \label{2.etw} \\
	&\phantom{xx}{}- \frac14\int_{\R^3}\big[\vec{\Omega}_{\rm o}
	\cdot\vec\sigma,[\vec{\Omega}_{\rm o}\cdot\vec\sigma,M]\big]\frac{|k|^2}{2}dk
	+ \frac12\int_{\R^3}Q_{\rm sf}(M)|k|^2dk,
	\nonumber
\end{align}
where the effective field $\vec{\Omega}_{\rm ET}$ is defined by
\begin{equation}\label{2.OmET}
  \vec{\Omega}_{\rm ET} = (k\cdot\na_x-\na_x V\cdot\na_k)\vec{\Omega}_{\rm o}
	+ \vec{\Omega}_{\rm e},
\end{equation}
and $\vec{\Omega}_{\rm o}$ and $\vec{\Omega}_{\rm e}$ are the odd and even
parts of $\vec{\Omega}$ (with respect to $k$), respectively.
The tensor-valued fluxes are defined by
\begin{equation}\label{2.J}
\begin{aligned}
  J_n &= -\diver_x\Pi - n\na_x V + \Pi_{\Omega_{\rm o}}, \\
	J_W &= -\diver_x Q - (W\sigma_0+\Pi)\na_x V + Q_{\Omega_{\rm o}},
\end{aligned}
\end{equation}
and the tensors $\Pi=(\Pi^{j\ell})$, $Q=(Q^{j\ell})$ with
$\Pi^{j\ell}$, $Q^{j\ell}\in\C^{2\times 2}$ and 
$\Pi_{\Omega_{\rm o}}=(\Pi_{\Omega_{\rm o}}^j)$, 
$Q_{\Omega_{\rm o}}=(Q_{\Omega_{\rm o}}^j)$ with 
$\Pi_{\Omega_{\rm o}}^j$, $Q_{\Omega_{\rm o}}^j\in\C^{2\times 2}$
are given by the moments
\begin{align*}
  \Pi^{j\ell} &= \int_{\R^3}k_j k_\ell Mdk, & 
	Q^{j\ell} &= \frac12\int_{\R^3}k_j k_\ell|k|^2Mdk, \\
	\Pi_{\Omega_{\rm o}}^j 
	&= \mathrm{i}\int_{\R^3}\big[\vec{\Omega}_{\rm o}\cdot\vec\sigma,M\big]k_j dk, & 
	Q_{\Omega_{\rm o}}^j &= \frac{\mathrm{i}}{2}\int_{\R^3}
	\big[\vec{\Omega}_{\rm o}\cdot\vec\sigma,M\big]k_j|k|^2dk,
\end{align*}
where $j,\ell=1,2,3$.
The first two terms on the right-hand sides of \eqref{2.etn} and \eqref{2.etw} 
are due to spinor effects; they vanish
in the classical energy-transport model. The last term on the left-hand side
of \eqref{2.etw} is the Joule heating and it is also present in the classical
model. The last terms in \eqref{2.etn}-\eqref{2.etw} express the moments
of the spin-flip interactions. 


\subsection{Formulation in the Pauli basis}\label{sec.pauli}

In order to derive simplified spin energy-transport models in explicit form,
it is convenient to formulate \eqref{2.etn}-\eqref{2.etw} in the Pauli basis.
Recall that $n = n_0\sigma_0+\vec{n}\cdot\vec\sigma$ and
$W=W_0\sigma_0+\vec{W}\cdot\vec\sigma$. Furthermore, we expand
\begin{equation}\label{2.Q}
  \int_{\R^3}Q_{\rm sf}(M)dk
  = Q_{{\rm sf},n,0}\sigma_0+\vec{Q}_{{\rm sf},n}\cdot\vec\sigma, \quad
  \frac12\int_{\R^3}Q_{\rm sf}(M)|k|^2dk
  = Q_{{\rm sf},W,0}\sigma_0+\vec{Q}_{{\rm sf},W}\cdot\vec\sigma.
\end{equation}

\begin{lemma}[Energy-transport model in Pauli components]\label{lem.fullet}\sloppy
Equations \eqref{2.etn}-\eqref{2.etw} can be written in the Pauli components
$(n_0,\vec{n})$ and $(W_0,\vec{W})$ as
\begin{align}
  & \pa_t n_0 - \diver_x\bigg(\frac23\na_x W_0+n_0\na_x V\bigg) = Q_{{\rm sf},n,0}, 
	\label{2.etn0} \\
	& \pa_t\vec{n} - \sum_{j=1}^3\pa_{x_j}\bigg(\frac23\pa_{x_j}\vec{W} 
	+ \vec{n}\pa_{x_j} V
	+ 2\int_{\R^3}(\vec{\Omega}_{\rm o}\times\vec{M})k_jdk\bigg) \label{2.etvecn} \\
	&\phantom{xx}{}+ \sum_{j=1}^3\int_{\R^3}\pa_{x_j}
	(\vec{\Omega}_{\rm o}\times\vec{M})k_jdk
	+ \sum_{j=1}^3\pa_{x_j} V\int_{\R^3}\pa_{k_j}(\vec{\Omega}_{\rm o}\times\vec{M})dk
	+ \int_{\R^3}\vec{\Omega}_{\rm e}\times\vec{M}dk \nonumber \\
	&\phantom{xx}{}+ \int_{\R^3}\big(|\vec{\Omega}_{\rm o}|^2 
	- \vec{\Omega}_{\rm o}\otimes\vec{\Omega}_{\rm o}\big)\cdot\vec{M}dk 
	= \vec{Q}_{{\rm sf},n}, \nonumber \\
	& \pa_t W_0 - \diver_x\bigg(\frac16\int_{\R^3}\na_x M_0|k|^4 dk
	+ \frac53 W_0\na_x V\bigg) \label{2.etw0} \\
	&\phantom{xx}{}- \bigg(\frac23\na_x W_0+n_0\na_x V\bigg)\cdot\na_x V
	= Q_{{\rm sf},W,0}, \nonumber \\
	& \pa_t\vec{W} - \sum_{j=1}^3\bigg\{\pa_{x_j}\bigg(\frac16\pa_{x_j}\int_{\R^3}
	\vec{M}|k|^4dk + \frac53\vec{W}\pa_{x_j}V 
	+ \int_{\R^3}(\vec{\Omega}_{\rm o}\times\vec{M})k_j|k|^2dk\bigg) \label{2.etvecw} \\
	&\phantom{xx}{}+ \bigg(\frac23\pa_{x_j}\vec{W} + \vec{n}\pa_{x_j}V
	+ 2\int_{\R^3}(\vec{\Omega}_{\rm o}\times\vec{M})k_jdk\bigg)\pa_{x_j} V\bigg\} 
	\nonumber \\
	&\phantom{xx}{}+ \frac12\sum_{j=1}^3\pa_{x_j}
	\int_{\R^3}(\vec{\Omega}_{\rm o}\times\vec{M})k_j|k|^2dk
	+ \frac12\sum_{j=1}^3\pa_{x_j}V\int_{\R^3}
	\pa_{k_j}(\vec{\Omega}_{\rm o}\times\vec{M})|k|^2 dk \nonumber \\
	&\phantom{xx}{}+ \frac12\int_{\R^3}(\vec{\Omega}_{\rm e}\times\vec{M})|k|^2dk
	+ \int_{\R^3}\big(|\vec{\Omega}_{\rm o}|^2 
	- \vec{\Omega}_{\rm o}\otimes\vec{\Omega}_{\rm o}\big)\cdot\vec{M}|k|^2 dk 
	= \vec{Q}_{{\rm sf},W}, \nonumber
\end{align}
where $\pa_{x_j}=\pa/\pa x_j$, $\pa_{k_j}=\pa/\pa k_j$.
\end{lemma}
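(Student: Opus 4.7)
The plan is to decompose every Hermitian matrix appearing in
\eqref{2.etn}--\eqref{2.etw} in the Pauli basis and to apply the two
projections $X\mapsto\tfrac12\operatorname{tr}X$ and
$X\mapsto\tfrac12\operatorname{tr}(\vec\sigma X)$, which extract the
coefficient of $\sigma_0$ and the three coefficients of
$\vec\sigma=(\sigma_1,\sigma_2,\sigma_3)$, respectively. The scalar (trace)
projection is immediate: the trace of any commutator vanishes, so on the
right-hand sides of \eqref{2.etn}--\eqref{2.etw} only the spin-flip moments
$Q_{{\rm sf},n,0}$ and $Q_{{\rm sf},W,0}$ survive. The workhorse for the
vector projection is the identity
$[\vec A\cdot\vec\sigma,\vec B\cdot\vec\sigma]=2\mathrm{i}(\vec A\times\vec B)\cdot\vec\sigma$;
applied to $M=M_0\sigma_0+\vec M\cdot\vec\sigma$ it converts
$\tfrac{\mathrm{i}}{2}[\vec\Omega_{\rm ET}\cdot\vec\sigma,M]$ into
$-(\vec\Omega_{\rm ET}\times\vec M)\cdot\vec\sigma$. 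Iterating the identity
and then using the triple-vector-product formula
$\vec A\times(\vec A\times\vec B)=\vec A(\vec A\cdot\vec B)-|\vec A|^2\vec B$
converts $-\tfrac14[\vec\Omega_{\rm o}\cdot\vec\sigma,[\vec\Omega_{\rm o}\cdot\vec\sigma,M]]$
into $-\bigl((|\vec\Omega_{\rm o}|^2-\vec\Omega_{\rm o}\otimes\vec\Omega_{\rm o})\vec M\bigr)\cdot\vec\sigma$;
this is the source of the integral of
$(|\vec\Omega_{\rm o}|^2-\vec\Omega_{\rm o}\otimes\vec\Omega_{\rm o})\cdot\vec M$
on the LHS of \eqref{2.etvecn} and its $|k|^2$-weighted analogue in
\eqref{2.etvecw}.

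Second, I evaluate the moment integrals that define the fluxes in
\eqref{2.J}. Since $M[A,C]=\exp(A+C|k|^2/2)$ depends on $k$ only through
$|k|^2$, the second- and fourth-order moments are isotropic:
$\Pi^{j\ell}=\tfrac23 W\delta_{j\ell}$ and
$Q^{j\ell}=\tfrac16\delta_{j\ell}\int_{\R^3}|k|^4 M\,dk$. Hence
$\diver_x\Pi=\tfrac23\na_x W$ and $(W\sigma_0+\Pi)\na_x V=\tfrac53 W\na_x V$.
The vector parts of $\Pi_{\Omega_{\rm o}}^j$ and $Q_{\Omega_{\rm o}}^j$
follow at once from the commutator identity and equal
$-2\int_{\R^3}(\vec\Omega_{\rm o}\times\vec M)k_j\,dk$ and
$-\int_{\R^3}(\vec\Omega_{\rm o}\times\vec M)k_j|k|^2\,dk$, respectively,
while their traces are zero. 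Plugging these expressions into \eqref{2.J}
and extracting the scalar and vector parts of $\diver_x J_n$,
$\diver_x J_W$ and $J_n\cdot\na_x V$ produces every diffusive, drift, and
Joule-heating term appearing in \eqref{2.etn0}--\eqref{2.etvecw}.

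The only non-routine step is to reshape the $\vec\Omega_{\rm ET}$
contribution in the vector projection of \eqref{2.etn} into the form
displayed on the LHS of \eqref{2.etvecn}. Substituting \eqref{2.OmET}, I
split $\int_{\R^3}\vec\Omega_{\rm ET}\times\vec M\,dk$ into three pieces
involving $k_j\pa_{x_j}\vec\Omega_{\rm o}$,
$\pa_{x_j}V\,\pa_{k_j}\vec\Omega_{\rm o}$, and $\vec\Omega_{\rm e}$; the
Leibniz rule
$\pa_{x_j}(\vec\Omega_{\rm o}\times\vec M)=(\pa_{x_j}\vec\Omega_{\rm o})\times\vec M+\vec\Omega_{\rm o}\times(\pa_{x_j}\vec M)$
and its $k$-analogue, combined with integration by parts in $k$ (whose
boundary contributions vanish by the Gaussian decay of $\vec M$), recast
these pieces into the three integrals displayed in \eqref{2.etvecn}. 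The
identical argument with the weight $|k|^2/2$ (and with $\vec\Omega_{\rm o}$
replacing $\vec\Omega_{\rm ET}$, as in the RHS of \eqref{2.etw}) delivers
\eqref{2.etvecw}. The principal obstacle is precisely this last bookkeeping
step: one must verify that the chain-rule pieces
$\vec\Omega_{\rm o}\times\pa_{x_j}\vec M$ and
$\vec\Omega_{\rm o}\times\pa_{k_j}\vec M$ combine correctly with the
contributions $2\int(\vec\Omega_{\rm o}\times\vec M)k_j\,dk$ and
$\int(\vec\Omega_{\rm o}\times\vec M)k_j|k|^2\,dk$ produced by
$\Pi_{\Omega_{\rm o}}$ and $Q_{\Omega_{\rm o}}$, so that no spurious terms
remain.
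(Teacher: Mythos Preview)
Your proposal is correct and follows essentially the same route as the paper: Pauli decomposition of all matrix quantities, the commutator identity $[\vec A\cdot\vec\sigma,\vec B\cdot\vec\sigma]=2\mathrm{i}(\vec A\times\vec B)\cdot\vec\sigma$ (iterated for the double commutator), the isotropy computation $\Pi^{j\ell}=\tfrac23 W\delta_{j\ell}$, $Q^{j\ell}=\tfrac16\delta_{j\ell}\int|k|^4M\,dk$, and the expansion of the $\vec\Omega_{\rm ET}$ contribution via \eqref{2.OmET}. The paper organizes the computation by first writing intermediate equations in the unexpanded fluxes $(J_{n,0},\vec J_n,J_{W,0},\vec J_W)$ and then inserting the explicit moment formulas, whereas you compute the moments first and substitute directly, but the content is identical.
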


\begin{proof}
We reformulate \eqref{2.etn}-\eqref{2.etw} in terms of the Pauli coefficients.
For this, set $J_{n}=(J_{n}^j)_{j=1,2,3}$, $J_W=(J_W^j)_{j=1,2,3}$ and
$J_n^j=J_{n,0}^j\sigma_0 + \vec{J}_n\cdot\vec{\sigma}$, 
$J_W^j=J_{W,0}^j\sigma_0 + \vec{J}_W\cdot\vec{\sigma}$.
We obtain
\begin{align}
  & \pa_t n_0 + \sum_{j=1}^3\pa_{x_j}J_{n,0}^j = Q_{{\rm sf},n,0}, \label{2.aux1} \\
	& \pa_t\vec{n} + \sum_{j=1}^3\pa_{x_j}\vec{J}_n^j 
	+ \int_{\R^3}\vec{\Omega}_{\rm ET}\times\vec{M} dk
	+ \int_{\R^3}\big(|\vec{\Omega}_{\rm o}|^2 
	- \vec{\Omega}_{\rm o}\otimes\vec{\Omega}_{\rm o}\big)\cdot\vec{M}dk 
	= \vec{Q}_{{\rm sf},n}, \label{2.aux2} \\
	& \pa_t W_0 + \sum_{j=1}^3\big(\pa_{x_j}J_{W,0}^j + J_{n,0}^j\pa_{x_j}V\big) 
	= Q_{{\rm sf},W,0}, \label{2.aux3} \\
	& \pa_t\vec{W} + \sum_{j=1}^3\big(\pa_{x_j}\vec{J}_W^j + \vec{J}_n^j\pa_{x_j}V\big)
	+ \int_{\R^3}(\vec{\Omega}_{\rm ET}\times\vec{M})|k|^2 dk \label{2.aux4} \\
	&\phantom{xxm}+ \frac12\int_{\R^3}\big(|\vec{\Omega}_{\rm o}|^2 
	- \vec{\Omega}_{\rm o}\otimes\vec{\Omega}_{\rm o}\big)\cdot\vec{M}|k|^2 dk 
	= \vec{Q}_{{\rm sf},W}. \nonumber
\end{align}
Let us expand the integrals involving $\vec{\Omega}_{\rm ET}$
and the fluxes. Let $\phi(k)=1$ or $\phi(k)=|k|^2/2$. Then, recalling
definition \eqref{2.OmET} for $\vec{\Omega}_{\rm ET}$,
\begin{align*}
  \int_{\R^3} & (\vec{\Omega}_{\rm ET}\times\vec{M})\phi(k)dk \\
	&= \int_{\R^3}\big(k\cdot\na_x - \na_x V\cdot\na_k\big)(\vec{\Omega}_{\rm o}
	\times\vec{M})\phi(k)dk
	+ \int_{\R^3}(\vec{\Omega}_{\rm e}\times\vec{M})\phi(k)dk \\
	&= \sum_{j=1}^3\pa_{x_j}\int_{\R^3}(\vec{\Omega}_{\rm o}\times\vec{M})k_j\phi(k)dk
	- \sum_{j=1}^3\pa_{x_j}V\int_{\R^3}(\pa_{k_j}\vec{\Omega}_{\rm o}\times\vec{M})
	\phi(k)dk \\
	&\phantom{xx}{}+ \int_{\R^3}(\vec{\Omega}_{\rm e}\times\vec{M})\phi(k)dk.
\end{align*}
Inserting these expressions into the evolution equations for $\vec{n}$ and $\vec{W}$,
we recover the three integrals in the second line of \eqref{2.etvecn} as well as
the integrals in the third line, and the first integral in the 
fourthline of \eqref{2.etvecw}.

It remains to compute the fluxes \eqref{2.J}. First, we calculate
$$
  \Pi^{j\ell} = \frac13\int_{\R^3}M|k|^2 dk\delta_{j\ell} = \frac23W\delta_{j\ell},
	\quad Q^{j\ell} = \frac16\int_{\R^3}M|k|^4dk\delta_{j\ell}.
$$
Furthermore, using the formula $[\vec{u}\cdot\vec{\sigma},\vec{v}\cdot\vec{\sigma}]
= 2\mathrm{i}(\vec{u}\times\vec{v})\cdot\vec{\sigma}$ for $\vec{u}$, $\vec{v}\in\R^3$,
we find that 
$\Pi_{\Omega_{\rm o}}=\Pi_{\Omega_{\rm o},0}\sigma_0+\vec{\Pi}_{\Omega_{\rm o}}
\cdot\vec\sigma$ with $\Pi_{\Omega_{\rm o},0}=0$ and 
$\vec{\Pi}_{\Omega_{\rm o}}=-2\int_{\R^3}(\vec{\Omega}_{\rm o}\times\vec{M})k dk$.
Therefore,
\begin{align*}
  J_{n,0}^j &= -\sum_{\ell=1}^3\pa_{x_\ell}\Pi_{0}^{j\ell}- n_0\pa_{x_j}V
	+ \Pi_{\Omega_0,0}^j
	= -\frac23\pa_{x_j}W_0 - n_0\pa_{x_j}V, \\
  \vec{J}_{n}^j &= -\sum_{\ell=1}^3\pa_{x_\ell}\vec{\Pi}^{j\ell}
	- \vec{n}\pa_{x_j}V + \vec{\Pi}_{\Omega_{\rm o}}^j
	= -\frac23\pa_{x_j}\vec{W} - \vec{n}\pa_{x_j}V 
	- 2\int_{\R^3}(\vec{\Omega}_{\rm o}\times\vec{M})k_j dk.
\end{align*}
Expanding $Q_{\Omega_{\rm o}}=Q_{\Omega_{\rm o},0}\sigma_0
+\vec{Q}_{\Omega_{\rm o}}\cdot\vec\sigma$ with $Q_{\Omega_{\rm o},0}=0$ and 
$\vec{Q}_{\Omega_{\rm o}}=-\int_{\R^3}(\vec{\Omega}_{\rm o}\times\vec{M})k|k|^2dk$, 
it follows that
\begin{align*}
  J_{W,0}^j &= -\sum_{\ell=1}^3\big(\pa_{x_\ell}Q_0^{j\ell} + (W_0\delta_{j\ell}
	+ \Pi_0^{j\ell})\pa_{x_\ell}V\big) + Q_{\Omega_{\rm o},0}^j \\
	&= -\frac16\pa_{x_j}\int_{\R^3}M_0|k|^4 dk - \frac53W_0\pa_{x_j}V, \\
	\vec{J}_W^j &= -\sum_{\ell=1}^3\big(\pa_{x_\ell}\vec{Q}^{j\ell}
	+ (\vec{W}\delta_{j\ell} + \vec{\Pi}^{j\ell})\pa_{x_\ell}V\big) 
	+ \vec{Q}_{\Omega_{\rm o}}^j \\
	&= -\frac16\pa_{x_j}\int_{\R^3}\vec{M}|k|^4dk - \frac53\vec{W}\pa_{x_j}V
	- \int_{\R^3}(\vec{\Omega}_{\rm o}\times\vec{M})k_j|k|^2 dk.
\end{align*}
Inserting these expressions into \eqref{2.aux1}-\eqref{2.aux4} gives 
\eqref{2.etn0}-\eqref{2.etvecw}.
\end{proof}


\section{Simplified spin energy-transport equations}\label{sec.simpl}

In this section, we derive some explicit models. We assume for simplicity
that the odd part of the magnetization vanishes, $\vec{\Omega}_{\rm o}=0$, and 
that the even part $\vec{\Omega}_{\rm e}$ depends on $x$ only.
Moreover, we suppose that the spin-flip interactions are modeled by the
relaxation-time operator 
\begin{equation}\label{3.sf}
  Q_{\rm sf}(M) := -\frac{1}{\tau_{\rm sf}}
	\bigg(M-\frac12\operatorname{tr}(M)\sigma_0\bigg)
	= -\frac{1}{\tau_{\rm sf}}\vec{M}\cdot\vec\sigma,
\end{equation}
where $\tau_{\rm sf}>0$ is the average time between two subsequent spin-flip
collisions, and we recall that $M=M_0\sigma_0+\vec{M}\cdot\vec\sigma$. 
In particular, with the notation of \eqref{2.Q},
$$
  Q_{{\rm sf},n,0} = 0, \quad \vec{Q}_{{\rm sf},n} = -\frac{\vec{n}}{\tau_{\rm sf}},
	\quad Q_{{\rm sf},W,0} = 0, \quad 
	\vec{Q}_{{\rm sf},W} = -\frac{\vec{W}}{\tau_{\rm sf}}.
$$
Then system \eqref{2.etn0}-\eqref{2.etvecw} reduces to
\begin{align}
  & \pa_t n_0 - \diver\bigg(\frac23\na W_0+n_0\na V\bigg) = 0, \label{3.etn0} \\
	& \pa_t\vec{n} - \sum_{j=1}^3\pa_{x_j}\bigg(\frac23\pa_{x_j}\vec{W}
	+\vec{n}\pa_{x_j}V\bigg) + \vec{\Omega}_{\rm e}\times\vec{n} 
	= -\frac{\vec{n}}{\tau_{\rm sf}}, \label{3.etvecn} \\
	& \pa_t W_0 - \diver\bigg(\frac16\na\int_{\R^3}M_0|k|^2dk + \frac53W_0\na V\bigg)
	- \bigg(\frac23\na W_0+n_0\na V\bigg)\cdot\na V = 0, \label{3.etw0} \\
	& \pa_t\vec{W} - \sum_{j=1}^3\bigg\{\pa_{x_j}\bigg(\frac16\pa_{x_j}\int_{\R^3}
	\vec{M}|k|^4 dk + \frac53\vec{W}\pa_{x_j}V\bigg) + \bigg(\frac23\pa_{x_j}\vec{W}
	+ \vec{n}\pa_{x_j}V\bigg)\pa_{x_j}V\bigg\} \label{3.etvecw} \\
	&\phantom{xxm}{}+ \vec{\Omega}_{\rm e}\times\vec{W} = -\frac{\vec{W}}{\tau_{\rm sf}}.
	\nonumber
\end{align}
Given $(n_0,\vec{n},W_0)$, we define the spin-up/spin-down densities $n_\pm$
and the temperature $T$ by
\begin{equation}\label{3.npm}
  n_\pm = n_0\pm|\vec{n}|, \quad W_0 = \frac32 n_0T.
\end{equation}
We also introduce the Gaussian with standard deviation $\theta>0$,
\begin{equation}\label{3.gT}
  g_\theta(k) = (2\pi\theta)^{-3/2}\exp\bigg(-\frac{|k|^2}{2\theta}\bigg), 
\end{equation}
whose moments are given by
\begin{equation}\label{3.int.gT}
  \int_{\R^3}g_\theta(k)\begin{pmatrix} 1 \\ |k|^2/2 \\ |k|^4/6 \end{pmatrix}dk
	= \begin{pmatrix} 1 \\ 3\theta/2 \\ 5\theta^2/2 \end{pmatrix}.
\end{equation}


\subsection{First model}\label{sec.et1}


\begin{theorem}[Spin energy-transport model with $\vec{c}=0$]\label{thm.czero}
For $\vec{c}=0$, system \eqref{3.etn0}-\eqref{3.etvecw} can be written in
the variables $(n_0,T,\vec{n})$ as \eqref{1.et1.n0}-\eqref{1.et1.vecn}.
\end{theorem}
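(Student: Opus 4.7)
The plan is to insert the ansatz $\vec{c}=0$ into the Maxwellian \eqref{1.max}, compute the moments explicitly via the Gaussian identities \eqref{3.int.gT}, and then substitute everything into \eqref{3.etn0}--\eqref{3.etvecw}.

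Step 1 (explicit Maxwellian and moments). With $\vec{c}=0$ the argument $\vec{b}(k)=\vec{a}$ is $k$-independent, so \eqref{1.max} gives $M_0 = e^{a_0+c_0|k|^2/2}\cosh|\vec{a}|$ and $\vec{M} = e^{a_0+c_0|k|^2/2}(\sinh|\vec{a}|/|\vec{a}|)\vec{a}$. Introducing $\theta = -1/c_0$ (required positive for integrability) and $\alpha = e^{a_0}(2\pi\theta)^{3/2}$, I would factor the prefactor as $e^{a_0+c_0|k|^2/2} = \alpha g_\theta(k)$ with $g_\theta$ as in \eqref{3.gT}. The identities \eqref{3.int.gT} then yield at once $n_0 = \alpha\cosh|\vec{a}|$, $\vec{n} = \alpha(\sinh|\vec{a}|/|\vec{a}|)\vec{a}$, $W_0 = (3\theta/2)n_0$, $\vec{W} = (3\theta/2)\vec{n}$, and $\tfrac16\int_{\R^3}M_0|k|^4 dk = (5\theta^2/2)n_0$, $\tfrac16\int_{\R^3}\vec{M}|k|^4 dk = (5\theta^2/2)\vec{n}$. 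From $T = 2W_0/(3n_0)$ in \eqref{3.npm} one reads off $T = \theta$, hence the two key identities
\[
\vec{W} = \tfrac{3}{2}T\vec{n}, \qquad \tfrac16\int_{\R^3}M_0|k|^4 dk = \tfrac{5}{2}n_0 T^2.
\]

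Step 2 (rewriting the fluxes). Substituting these moment identities into the divergence terms of \eqref{3.etn0}, \eqref{3.etw0}, \eqref{3.etvecn} converts them directly into the target fluxes: $\tfrac23\na W_0 + n_0\na V = \na(n_0 T) + n_0\na V = -J_n$; $\tfrac16\na\int M_0|k|^4 dk + \tfrac53 W_0\na V = \tfrac52\na(n_0 T^2)+\tfrac52 n_0 T\na V = -J_W$; and $\tfrac23\pa_{x_j}\vec{W} + \vec{n}\pa_{x_j}V = \pa_{x_j}(T\vec{n}) + \vec{n}\pa_{x_j}V$. Using $\pa_t W_0 = \tfrac32\pa_t(n_0 T)$ and rewriting the Joule term $-(\tfrac23\na W_0 + n_0\na V)\cdot\na V = J_n\cdot\na V$, equations \eqref{3.etn0}, \eqref{3.etw0}, \eqref{3.etvecn} become, after trivial rearrangement, exactly \eqref{1.et1.n0}, \eqref{1.et1.n0T}, \eqref{1.et1.vecn}.

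Step 3 (compatibility of the $\vec{W}$ equation). The only subtle point, and the main obstacle, is that \eqref{3.etn0}--\eqref{3.etvecw} comprises eight scalar equations while the target system \eqref{1.et1.n0}--\eqref{1.et1.vecn} consists of only five, so I must account for \eqref{3.etvecw}. The ansatz $\vec{c}=0$ leaves exactly five independent Lagrange parameters $(a_0,c_0,\vec{a})$, equivalently it imposes the three-component constraint $\vec{W} = \tfrac{3}{2}T\vec{n}$ already obtained in Step 1. I would verify that \eqref{3.etvecw} is automatically compatible with the other three equations: differentiate $\vec{W} = \tfrac{3}{2}T\vec{n}$ in time, eliminate $\pa_t T$ via the $n_0$ and $W_0$ equations using $T = 2W_0/(3n_0)$, use the fourth-moment identity $\tfrac16\int\vec{M}|k|^4 dk = \tfrac{5}{2}T^2\vec{n}$ to rewrite the flux on the left-hand side of \eqref{3.etvecw}, and substitute the $\vec{n}$ equation \eqref{1.et1.vecn}. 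This reduces \eqref{3.etvecw} to an algebraic identity; it is the moment-closure consistency check and the place where the assumption $\vec{c}=0$ is really being exploited. The rest of the argument is only the direct Gaussian-moment computation of Step 1 and the substitutions of Step 2.
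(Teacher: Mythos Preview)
Your Steps 1 and 2 are correct and match the paper's proof almost verbatim; the paper parametrizes via $\kappa_\pm=(2\pi\theta)^{3/2}e^{a_0\pm|\vec a|}$ instead of your single prefactor $\alpha$, but this is purely cosmetic. Both arguments compute the moments from the Gaussian identities \eqref{3.int.gT}, obtain $\vec W=\tfrac32 T\vec n$ and $\tfrac16\int M_0|k|^4\,dk=\tfrac52 n_0T^2$, and substitute into \eqref{3.etn0}, \eqref{3.etvecn}, \eqref{3.etw0}. That is already the full proof.

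Your Step 3, however, is both unnecessary and incorrect. You rightly observe that \eqref{3.etn0}--\eqref{3.etvecw} comprises eight scalar equations while the target has five, but the resolution is not that \eqref{3.etvecw} is redundant: it is \emph{discarded}. The assumption $\vec c=0$ is a modeling closure imposing the algebraic constraint $\vec W=\tfrac32 T\vec n$, and the reduced model retains only the evolution equations for the five independent moments $(n_0,\vec n,W_0)$; the paper says this explicitly in the paragraph introducing the first model. Your proposed consistency check would in fact fail. Taking $V=0$ for clarity, substituting $\vec W=\tfrac32 T\vec n$ and $\tfrac16\int\vec M|k|^4\,dk=\tfrac52 T^2\vec n$ into \eqref{3.etvecw} and eliminating the time derivatives via \eqref{1.et1.n0}--\eqref{1.et1.vecn} leaves the requirement
\[
\frac{\vec n}{n_0}\Bigl[\tfrac52\Delta(n_0T^2)-\tfrac32 T\,\Delta(n_0T)\Bigr]
\;=\;\tfrac52\Delta(T^2\vec n)-\tfrac32 T\,\Delta(T\vec n),
\]
which is \emph{not} an identity; it fails whenever some ratio $n_i/n_0$ varies in space (expand both sides and compare the first-order terms $7T\na T\cdot\na\log(\,\cdot\,)$). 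The submanifold $\vec c=0$ is therefore not invariant under the full eight-equation dynamics, and the reduction to five equations is an approximation, not an exact projection. Simply remove Step 3.
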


\begin{proof}
Under the assumption $\vec{c}=0$, the higher-order moments
in \eqref{3.etw0}-\eqref{3.etvecw} can be computed explicitly. Indeed,
the Pauli expansion of the Maxwellian \eqref{2.MPauli} simplifies to
$$
  M_0 = e^{a_0+c_0|k|^2/2}\cosh|\vec{a}|, \quad
	\vec{M} = e^{a_0+c_0|k|^2/2}\sinh|\vec{a}|\frac{\vec{a}}{|\vec{a}|}.
$$
Observe that $c_0<0$ is necessary to ensure the integrability of $M_0$ and
$\vec{M}$. The above expressions can be reformulated by introducing the 
new Lagrange multipliers
$$
  \kappa_\pm := \bigg(\frac{2\pi}{-c_0}\bigg)^{3/2}e^{a_0\pm|\vec{a}|}, \quad
	\theta := -\frac{1}{c_0}, \quad \vec{\gamma} := \frac{\vec{a}}{|\vec{a}|}.
$$
Then $M_0=\frac12(\kappa_+ + \kappa_-)g_\theta(k)$, 
$\vec{M}=\frac12(\kappa_+ - \kappa_-)g_\theta(k)\vec{\gamma}$,
where $g_\theta$ is defined in \eqref{3.gT}.
As a consequence, we have
\begin{align*}
  & n_0 = \int_{\R^3}M_0 dk = \frac12(\kappa_+ + \kappa_-), \quad 
	\vec{n} = \int_{\R^3}\vec{M}dk = \frac12(\kappa_+ - \kappa_-)\vec{\gamma}, \\
  & W_0 = \frac12\int_{\R^3}M_0|k|^2 dk = \frac34\theta(\kappa_+ + \kappa_-),
\end{align*}
and we infer from \eqref{3.npm} that $\kappa_\pm = n_\pm$, 
$\vec{\gamma}=\vec{n}/|\vec{n}|$, and $\theta=T$. 
Then the Pauli coefficients become $M_0=n_0 g_T(k)$, $\vec{M}=\vec{n}g_T(k)$
and
$$
	\vec{W} = \frac12\int_{\R^3}\vec{M}|k|^2 dk = \frac32\vec{n}T, \quad
	\frac16\int_{\R^3}M_0|k|^4 dk = \frac52 n_0T^2.
$$
Inserting these expressions into \eqref{3.etn0}-\eqref{3.etvecw} shows the result.
\end{proof}

\begin{remark}\label{rem.et1}\rm
The derivation of model \eqref{1.et1a.n0}-\eqref{1.et1a.vecn} is similar to
that one in \cite{BeEl09}, therefore we sketch it only. 
The Maxwellian is here given by
\begin{align*}
  & M[F](k) = (2\pi\theta[F])^{-3/2}e^{-|k|^2/(2\theta[F])}\int_{\R^3} F(k')dk', \\
	& \mbox{where }
	\theta[F] = \frac13\frac{\int_{\R^3}\operatorname{tr}(PF(k))|k|^2dk}{\int_{\R^3}
	\operatorname{tr}(PF(k))dk}.
\end{align*}
The formal limit $\eps\to 0$ in \eqref{2.be} gives $Q(F^0)=0$, where
$F^0=\lim_{\eps\to 0}F_\eps$, showing that $F^0=M[F^0]$. Next, we perform
a Hilbert expansion $F_\eps=M[F]+\eps F^1+O(\eps^2)$ and assume that $F^1$
is odd with respect to $k$. Since $1$, $|k|^2/2$ are even functions, $F^1$
does not contribute to the moments $n=\int_{\R^3}Fdk$, 
$W=\frac12\int_{\R^3}F|k|^2dk$. It holds that 
$W=\frac32 nT$, where $T:=\theta[F^0]$.
After a computation which is similar to the
derivation of the semiclassical energy-transport equations, we obtain the
moment equations
\begin{align*}
  & \pa_t n + \diver G_n + \mathrm{i}[n,\vec{\Omega}\cdot\vec{\sigma}] 
	= \frac12\operatorname{tr}(n)-n, \quad 
	G_n = -P^{-1/2}\big(\na(nT) + n\na V\big)P^{-1/2}, \\
	& \frac32\pa_t(nT) + \diver G_W + G_n\cdot\na V = 0, \quad
  G_W = -\frac53 P^{-1/2}\big(\na(nT^2) + nT\na V\big)P^{-1/2}.
\end{align*}
In order to formulate these equations in the Pauli components, 
we observe that for any $2\times 2$
Hermitian matrix $A=a_0\sigma_0+\vec{a}\cdot\vec{\sigma}$, it holds that
$P^{1/2}AP^{1/2}=b_0\sigma_0+\vec{b}\cdot\vec{\sigma}$, where
$$
  \begin{pmatrix} b_0 \\ \vec{b}\end{pmatrix} 
	= \eta^{-2}\begin{pmatrix} 1 & -p\vec{\Omega}^\top \\
	-p\vec{\Omega} & (1-\eta)\vec{\Omega}\otimes\vec{\Omega}+\eta\sigma_0\end{pmatrix}
	\begin{pmatrix} a_0 \\ \vec{a}\end{pmatrix}, \quad \eta=\sqrt{1-p^2}.
$$
We omit the calulcation and only note that
this leads to \eqref{1.et1a.n0}-\eqref{1.et1a.vecn}.
\qed
\end{remark}


\subsection{Second model}\label{sec.et2}


\begin{theorem}[Spin energy-transport model with $\vec{a}=0$, version I]\sloppy
\label{thm.azero}
For $\vec{a}=0$, system \eqref{3.etn0}-\eqref{3.etvecw} can be written in
the variables $(n_0,T,\vec{n})$ as \eqref{1.et2.n0}-\eqref{1.et2.vecn},
where the diffusion coefficient $D(n_+,n_-)$ and the polarization factor
$p(n_+,n_-)$ are defined by
\begin{equation}\label{32.Dp}
  D(n_+,n_-) = \frac{2n_0(n_+^{7/3}+n_-^{7/3})}{(n_+^{5/3}+n_-^{5/3})^2}, \quad
	p(n_+,n_-) = \frac{n_+^{5/3}-n_-^{5/3}}{n_+^{5/3}+n_-^{5/3}},
\end{equation}
and the spin-up/spin-down densities are given by $n_\pm=n_0\pm|\vec{n}|$.
\end{theorem}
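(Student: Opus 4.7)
The plan is to mimic the derivation of Theorem~\ref{thm.czero}, but now the twist is that the hyperbolic functions in the Maxwellian \eqref{1.max} carry a $k$-dependence, so the moment integrals no longer collapse to a single Gaussian but split into a pair of Gaussians with distinct effective temperatures. Setting $\vec{a}=0$ in \eqref{1.max} yields $\vec{b}(k)=\vec{c}|k|^2/2$, hence $|\vec{b}(k)|=|\vec{c}||k|^2/2$ and $\vec{b}/|\vec{b}|=\vec{\gamma}:=\vec{c}/|\vec{c}|$. Using $\cosh t\pm\sinh t=e^{\pm t}$, the Pauli components become
\[
  M_0=\tfrac12\bigl(e^{a_0+c_+|k|^2/2}+e^{a_0+c_-|k|^2/2}\bigr),\qquad
  \vec{M}=\tfrac12\bigl(e^{a_0+c_+|k|^2/2}-e^{a_0+c_-|k|^2/2}\bigr)\vec{\gamma},
\]
where $c_\pm:=c_0\pm|\vec{c}|$ must be negative for integrability. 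Introducing $\theta_\pm:=-1/c_\pm$ and $\kappa_\pm:=e^{a_0}(2\pi\theta_\pm)^{3/2}$ and using \eqref{3.gT} puts the Maxwellian in the compact form $M_0=\tfrac12(\kappa_+g_{\theta_+}+\kappa_-g_{\theta_-})$ and $\vec{M}=\tfrac12(\kappa_+g_{\theta_+}-\kappa_-g_{\theta_-})\vec{\gamma}$.

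The zeroth- and second-order moments \eqref{3.int.gT} give $n_0=\tfrac12(\kappa_++\kappa_-)$, $\vec{n}=\tfrac12(\kappa_+-\kappa_-)\vec{\gamma}$, and $W_0=\tfrac34(\kappa_+\theta_++\kappa_-\theta_-)$. From these I read off $\kappa_\pm=n_\pm$, $\vec{\gamma}=\vec{n}/|\vec{n}|$, and, using $W_0=\tfrac32 n_0 T$, the weighted-average identity $2n_0T=n_+\theta_++n_-\theta_-$. The decisive algebraic ingredient is the definition of $\kappa_\pm$, which enforces the hidden constraint $\kappa_+/\kappa_-=(\theta_+/\theta_-)^{3/2}$, equivalently $\theta_+/\theta_-=(n_+/n_-)^{2/3}$. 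Combining both identities and solving for $\theta_\pm$ yields the closed-form expression
\[
  \theta_\pm=\frac{2\,n_0\,T\,n_\pm^{2/3}}{n_+^{5/3}+n_-^{5/3}}.
\]

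With $\theta_\pm$ explicit, the higher-order moments appearing in \eqref{3.etw0} and \eqref{3.etvecn} follow from $\int g_\theta(k)|k|^4dk=15\theta^2$: a direct substitution gives $\tfrac16\int_{\R^3}M_0|k|^4dk=\tfrac54(n_+\theta_+^2+n_-\theta_-^2)=\tfrac52 D(n_+,n_-)\,n_0 T^2$ and $\vec{W}=\tfrac34(n_+\theta_+-n_-\theta_-)\vec{\gamma}=\tfrac32\,p(n_+,n_-)\,n_0 T\,\vec{n}/|\vec{n}|$, with $D$ and $p$ matching \eqref{32.Dp} after collecting terms. Inserting these expressions into \eqref{3.etn0}, \eqref{3.etw0} and \eqref{3.etvecn}, writing $\pa_t W_0=\tfrac32\pa_t(n_0T)$, and rewriting the Joule term via $(\tfrac23\na W_0+n_0\na V)\cdot\na V=-J_n\cdot\na V$, then delivers the target system \eqref{1.et2.n0}--\eqref{1.et2.vecn}. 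The main obstacle I expect is the algebraic step producing $\theta_\pm$: one has to exploit the hidden constraint $\kappa_+/\kappa_-=(\theta_+/\theta_-)^{3/2}$ (a footprint of the fact that the Maxwellian has only five free scalar parameters) and then verify that the particular combinations $n_\pm^{2/3}/(n_+^{5/3}+n_-^{5/3})$ reassemble exactly into the coefficients $D(n_+,n_-)$ and $p(n_+,n_-)$ in \eqref{32.Dp}; once this is in hand, the remaining moment computations and substitutions are pure book-keeping.
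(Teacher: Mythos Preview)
Your proposal is correct and follows essentially the same route as the paper's proof: both split the Maxwellian into a pair of Gaussians with temperatures $\theta_\pm=-1/(c_0\pm|\vec c|)$, identify $n_\pm$ with the corresponding prefactors, exploit the constraint $\theta_+/\theta_-=(n_+/n_-)^{2/3}$ to solve $\theta_\pm=2n_0T\,n_\pm^{2/3}/(n_+^{5/3}+n_-^{5/3})$, and then compute the second- and fourth-order moments to recover $p$ and $D$. The only cosmetic difference is that you package the prefactors as $\kappa_\pm=e^{a_0}(2\pi\theta_\pm)^{3/2}$ whereas the paper keeps $K=(2\pi)^{3/2}e^{a_0}$ and the factors $\theta_\pm^{3/2}$ separate; the algebra is identical.
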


\begin{proof}
For $\vec{a}=0$, the Pauli components of the Maxwellian take the form
\begin{equation}\label{32.M}
  M_0 = e^{a_0+c_0|k|^2/2}\cosh\bigg(|\vec{c}|\frac{|k|^2}{2}\bigg), \quad
	\vec{M} = e^{a_0+c_0|k|^2/2}\sinh\bigg(|\vec{c}|\frac{|k|^2}{2}\bigg)
	\frac{\vec{c}}{|\vec{c}|}.
\end{equation}
The integrability of $M_0$ and $\vec{M}$ implies that $c_0\pm|\vec{c}|<0$.
In the new Lagrange multiplier variables
\begin{equation}\label{32.K}
  K := (2\pi)^{3/2}e^{a_0}, \quad \theta_\pm := -\frac{1}{c_0\pm|\vec{c}|}, \quad
	\vec{\gamma} := \frac{\vec{c}}{|\vec{c}|}
\end{equation}
these components can be rewritten as
$$
  M_0 = \frac{K}{2}\big(\theta_+^{3/2}g_{\theta_+}(k) + \theta_-^{3/2}g_{\theta_-}(k)
	\big), \quad
	\vec{M} = \frac{K}{2}\big(\theta_+^{3/2}g_{\theta_+}(k) 
	- \theta_-^{3/2}g_{\theta_-}(k)\big)\vec{\gamma}.
$$
Taking into account \eqref{3.int.gT}, this shows that
\begin{align*}
  & n_0 = \int_{\R^3}M_0 dk = \frac{K}{2}\big(\theta_+^{3/2} + \theta_-^{3/2}\big), 
	\quad \vec{n} = \int_{\R^3}\vec{M}dk 
	= \frac{K}{2}\big(\theta_+^{3/2} - \theta_-^{3/2}\big)\vec{\gamma}, \\
	& W_0 = \frac12\int_{\R^3}M_0|k|^2 dk 
	= \frac{3K}{4}\big(\theta_+^{5/2} + \theta_-^{5/2}\big),
\end{align*}
and consequently, $n_\pm:=n_0\pm|\vec{n}|=K\theta_\pm^{3/2}$, 
$\vec{\gamma}=\vec{n}/|\vec{n}|$.
This implies that $W_0=\frac34(n_+\theta_+ + n_-\theta_-)$ and
$n_-/n_+ = (\theta_-/\theta_+)^{3/2}$. Hence,
\begin{align*}
  \frac32 n_0T &= W_0 = \frac34\frac{\theta_+}{n_+^{2/3}}\bigg(n_+^{5/3}
	+ \bigg(\frac{n_+}{n_-}\bigg)\frac{\theta_+}{\theta_-}n_-\bigg) \\
	&= \frac{3\theta_+}{4n_+^{2/3}}\big(n_+^{5/3} + n_-^{5/3}\big)
	= \frac{3\theta_-}{4n_-^{2/3}}\big(n_+^{5/3} + n_-^{5/3}\big).
\end{align*}
We obtain the following form for the Pauli components of $M$:
$$
  M_0 = \frac12\bigg(n_+g_{\theta_+}(k)+n_-g_{\theta_-}(k)\bigg), \quad
	\vec{M} = \frac12\bigg(n_+g_{\theta_+}(k)+n_-g_{\theta_-}(k)\bigg)
	\frac{\vec{n}}{|\vec{n}|}.
$$
It remains to compute the higher-order moments:
\begin{align*}
  \vec{W} = \frac12\int_{\R^3}\vec{M}|k|^2 dk
	&= \frac34(n_+\theta_+ - n_-\theta_-)\frac{\vec{n}}{|\vec{n}|}
	= \frac32n_0T\frac{n_+^{5/3} - n_-^{5/3}}{n_+^{5/3} + n_-^{5/3}}
	\frac{\vec{n}}{|\vec{n}|}, \\
	\frac16\int_{\R^3}M_0|k|^4 dk &= \frac54(n_+\theta_+^2 + n_-\theta_-^2)
	= 5n_0^2T^2\frac{n_+^{7/3} + n_-^{7/3}}{(n_+^{5/3} + n_-^{5/3})^2}.
\end{align*}
Inserting these expressions into \eqref{3.etn0}-\eqref{3.etw0} concludes the proof.
\end{proof}

\begin{remark}\rm\label{rem.D}
Equations \eqref{1.et2.n0}-\eqref{1.et2.vecn} are fully coupled since the diffusion
coefficient $D(n_+,n_-)$ depends on the spin vector density through
$|\vec{n}| = (n_+ - n_-)/2$. However, it turns out that $1\le D(n_+,n_-)\le 1.1$
for $|\vec{n}|\le n_0$, which means that the dependence of the energy $\frac32 n_0T$
on the spin vector density $\vec{n}$ is in fact very weak.
When the spin vector density vanishes, $\vec{n}=0$, it follows that $n_+=n_-=n_0$
and $D(n_+,n_-)=1$, and we recover the classical energy-transport model.
\qed
\end{remark}

The model in Theorem \ref{thm.czero} can be equivalently formulated in the variables
$(n_0,W_0,\vec{W})$, and this formulation is used below in the existence analysis.

\begin{theorem}[Spin energy-transport model with $\vec{a}=0$, version II]
\label{thm.azero2}\sloppy
For $\vec{a}=0$, system \eqref{3.etn0}-\eqref{3.etvecw} can be written in
the variables $(n_0,T,\vec{W})$ as
\begin{align}
  & \pa_t n_0 - \diver\big(\na(n_0T)+n_0\na V\big) = 0, \label{3.et2.n02} \\
	& \frac32\pa_t(n_0T) - \diver\bigg(\na Z_0+\frac52 n_0T\na V\bigg)
	- \big(\na(n_0T)+n_0\na V\big)\cdot\na V = 0, \label{3.et2.n0T2} \\
  & \pa_t\vec{W} - \sum_{j=1}^3\pa_{x_j}\bigg(\pa_{x_j}\vec{Z}+\frac53\vec{W}\pa_{x_j}V
	\bigg) - \sum_{j=1}^3\bigg(\frac23\pa_{x_j}\vec{W}
	+ \vec{n}\na_{x_j}V\bigg)\pa_{x_j}V	\label{3.et2.vecw2} \\
	&\phantom{xx}{}+ \vec{\Omega}_{\rm e}\times\vec{W} = -\frac{\vec{W}}{\tau_{\rm sf}},
  \nonumber
\end{align}
where the spin-vector density $\vec{n}$ and the auxiliary quantities 
$Z_0$ and $\vec{Z}$ are given by
\begin{align} 
  \vec{n} &= n_0\frac{W_+^{3/5}-W_-^{3/5}}{W_+^{3/5}+W_-^{3/5}}
	\frac{\vec{W}}{|\vec{W}|}, \nonumber \\
	Z_0 &= \frac{5}{18n_0}\big(W_+^{3/5}+W_-^{3/5}\big)\big(W_+^{7/5}+W_-^{7/5}\big), 
	\label{32.Z0} \\
  \vec{Z} &= \frac{5}{18n_0}\big(W_+^{3/5}+W_-^{3/5}\big)
	\big(W_+^{7/5}-W_-^{7/5}\big)\frac{\vec{W}}{|\vec{W}|}, \label{32.vecZ}
\end{align}
and $W_\pm=W_0\pm|\vec{W}|$, $W_0=\frac32n_0T$.
\end{theorem}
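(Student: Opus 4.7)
The plan is to build directly on the proof of Theorem \ref{thm.azero} (version I), where the Pauli components of the Maxwellian under the assumption $\vec a=0$ are already expressed as
\[
  M_0 = \tfrac{K}{2}\big(\theta_+^{3/2}g_{\theta_+}(k)+\theta_-^{3/2}g_{\theta_-}(k)\big),\qquad
  \vec M = \tfrac{K}{2}\big(\theta_+^{3/2}g_{\theta_+}(k)-\theta_-^{3/2}g_{\theta_-}(k)\big)\vec\gamma,
\]
with $\vec\gamma=\vec c/|\vec c|$ and $\theta_\pm=-1/(c_0\pm|\vec c|)$. Every moment is therefore already an explicit algebraic function of $(K,\theta_+,\theta_-)$. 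The remaining task is to change variables from $(n_0,T,\vec n)$ to $(n_0,T,\vec W)$, obtain explicit expressions for the higher-order moments $\tfrac16\int M_0|k|^4dk$ and $\tfrac16\int \vec M|k|^4dk$ in these new variables, and insert them into \eqref{3.etn0}, \eqref{3.etw0}, \eqref{3.etvecw}.

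First, I would use the identities $n_\pm=K\theta_\pm^{3/2}$ and $W_\pm=W_0\pm|\vec W|=\tfrac{3K}{2}\theta_\pm^{5/2}$, which follow at once from the Pauli representation of $M$ above together with \eqref{3.int.gT}. Solving the second relation gives $\theta_\pm=(2W_\pm/(3K))^{2/5}$, and substituting back into the first produces $n_\pm=(2/3)^{3/5}K^{2/5}W_\pm^{3/5}$. Adding the $n_\pm$ identities determines the constant $K$ through $(2/3)^{3/5}K^{2/5}=2n_0/(W_+^{3/5}+W_-^{3/5})$, and subtracting them gives the stated formula for $\vec n$, once one observes that $\vec W$ and $\vec n$ are both parallel to $\vec\gamma$, hence $\vec\gamma=\vec W/|\vec W|$.

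Next, I would evaluate the higher-order moments using \eqref{3.int.gT}, which yield
\[
  \tfrac16\int_{\R^3}M_0|k|^4dk = \tfrac{5K}{4}(\theta_+^{7/2}+\theta_-^{7/2}),\qquad
  \tfrac16\int_{\R^3}\vec M|k|^4dk = \tfrac{5K}{4}(\theta_+^{7/2}-\theta_-^{7/2})\vec\gamma.
\]
Substituting $\theta_\pm^{7/2}=(2W_\pm/(3K))^{7/5}$ and the value of $K$ determined above, the scalar prefactor $\tfrac{5K}{4}(2/(3K))^{7/5}$ collapses, after routine simplification, to $5(W_+^{3/5}+W_-^{3/5})/(18n_0)$, which identifies the two moments with $Z_0$ and $\vec Z$ as defined in \eqref{32.Z0}--\eqref{32.vecZ}.

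Finally, I would plug $W_0=\tfrac32 n_0 T$, the computed $\vec n$, and $Z_0,\vec Z$ into the scalar equations \eqref{3.etn0}, \eqref{3.etw0} and the vector equation \eqref{3.etvecw} of the reduced system; this delivers \eqref{3.et2.n02}--\eqref{3.et2.vecw2}. Equation \eqref{3.etvecn} is not needed, because in the new set of variables $\vec n$ is a constitutive function of $(n_0,\vec W)$ rather than an independent unknown. There is no conceptual obstacle; the only delicate point is bookkeeping with the fractional exponents $3/5$ and $7/5$ and verifying that the numerical prefactors, in particular the $5/18$ appearing in $Z_0$ and $\vec Z$, emerge correctly from the cancellation of powers of $K$.
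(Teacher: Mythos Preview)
Your proposal is correct and follows essentially the same route as the paper: express $(\theta_\pm,K,\vec\gamma)$ in terms of $(n_0,W_\pm,\vec W/|\vec W|)$ via $W_\pm=\tfrac{3K}{2}\theta_\pm^{5/2}$ and the $n_0$ relation, then compute the fourth-order moments $\tfrac16\int M_0|k|^4dk$ and $\tfrac16\int \vec M|k|^4dk$ to identify $Z_0$ and $\vec Z$, and finally insert into \eqref{3.etn0}, \eqref{3.etw0}, \eqref{3.etvecw}. The only cosmetic difference is that the paper simplifies the fourth-order moment via the shortcut $\tfrac{5K}{4}\theta_\pm^{7/2}=\tfrac56\theta_\pm W_\pm$ and then substitutes the explicit form of $\theta_\pm$, whereas you substitute $\theta_\pm^{7/2}=(2W_\pm/(3K))^{7/5}$ directly; both collapse to the same $5/(18n_0)$ prefactor.
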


\begin{proof}
With the new Lagrange multipliers introduced in the proof of Theorem
\ref{thm.czero}, we find that
\begin{align}
  n_0 &= \int_{\R^3}M_0 dk = \frac{K}{2}\big(\theta_+^{3/2}+\theta_-^{3/2}\big),
	\quad W_0 = \frac12\int_{\R^3}M_0|k|^2 dk 
	= \frac{3K}{4}\big(\theta_+^{5/2}+\theta_+^{5/2}\big), \label{32.n0} \\
	\vec{W} &= \frac12\int_{\R^3}\vec{M}|k|^2 dk 
	= \frac{3K}{4}\big(\theta_+^{5/2}-\theta_-^{5/2}\big)\vec{\gamma}. \label{32.vecw}
\end{align}
As $c_0<0$ is required to ensure integrability of the Maxwellian, it holds that
$\theta_+\ge\theta_-$, such that we deduce from \eqref{32.vecw}
that 
\begin{equation}\label{32.gamma}
  \vec{\gamma} = \frac{\vec{W}}{|\vec{W}|}, \quad 
	|\vec{W}| = \frac{3K}{4}(\theta_+^{5/2}-\theta_-^{5/2}).
\end{equation}
Let $W_\pm=W_0\pm|\vec{W}|$. Then
$$
  W_\pm = \frac{3K}{4}\big(\theta_+^{5/2}+\theta_+^{5/2}\big)
	\pm \frac{3K}{4}\big(\theta_+^{5/2}-\theta_-^{5/2}\big)
	= \frac{3K}{2}\theta_\pm^{5/2},
$$
which is equivalent to $\theta_\pm = (2W_\pm/(3K))^{2/5}$. Inserting this
expression into the first equation of \eqref{32.n0}, we obtain
$$
  n_0 = \frac{K}{2}\bigg(\bigg(\frac{2W_+}{3K}\bigg)^{3/5}
	+ \bigg(\frac{2W_-}{3K}\bigg)^{3/5}\bigg) 
	= \frac{K^{2/5}}{2^{2/5}3^{3/5}}\big(W_+^{3/5} + W_-^{3/5}\big).
$$
Thus, the constant $K$ can be written as 
\begin{equation}\label{32.K2}
  K = 2\cdot 3^{3/2}n_0^{5/2}(W_+^{3/5}+W_-^{3/5})^{-5/2}, 
\end{equation}
and we can eliminate $K$ in the formulation of $\theta_\pm$:
\begin{equation}\label{32.thetapm}
  \theta_\pm = \bigg(\frac{2W_\pm}{3K}\bigg)^{2/5}
	= \frac{W_\pm^{2/5}}{3n_0}\big(W_+^{3/5} + W_-^{3/5}\big).
\end{equation}
The spin-vector density is then computed as follows:
\begin{align*}
  \vec{n} &= \int_{\R^3}\vec{M}dk 
	= \frac{K}{2}\big(\theta_+^{3/2}-\theta_-^{3/2}\big)\frac{\vec{W}}{|\vec{W}|} \\
	&= \frac{3^{3/2}n_0^{5/2}}{(W_+^{3/5}+W_-^{3/5})^{5/2}}
	\bigg(\frac{W_+^{3/5}+W_-^{3/5}}{3n_0}\bigg)^{3/2}
	\big(W_+^{3/5}-W_-^{3/5}\big)\frac{\vec{W}}{|\vec{W}|} \\
	&= n_0\frac{W_+^{3/5}-W_-^{3/5}}{W_+^{3/5}+W_-^{3/5}}\frac{\vec{W}}{|\vec{W}|}.
\end{align*}

It remains to compute the fourth-order moments. 
Using $W_\pm=\frac32 K\theta_\pm^{5/2}$
and \eqref{32.thetapm}, we have
\begin{align*}
  \frac16\int_{\R^3}M_0|k|^4 dk &= \frac{5K}{4}\big(\theta_+^{7/2}+\theta_-^{7/2}\big)
	= \frac56(\theta_+W_+ + \theta_- W_-) \\
	&= \frac{5}{18n_0}\big(W_+^{3/5}+W_-^{3/5}\big)\big(W_+^{7/5}+W_-^{7/5}\big).
\end{align*}
In an analogous way, we calculate
$$
  \frac16\int_{\R^3}\vec{M}|k|^4 dk 
	= \frac{5}{18n_0}\big(W_+^{3/5}+W_-^{3/5}\big)\big(W_+^{7/5}-W_-^{7/5}\big)
	\frac{\vec{W}}{|\vec{W}|}.
$$
Inserting these expressions into \eqref{3.etn0}-\eqref{3.etvecw}, the result
follows.
\end{proof}

\begin{remark}\rm
If $\vec{W}=0$, it follows that $W_\pm=W_0=\frac32n_0T$, $\vec{n}=0$, 
$Z_0=\frac52n_0T^2$, and we recover the semiclassical energy-transport model.
It is possible to see that $1\le Z_0/(\frac52n_0T^2)\le 1.08$, which shows that
the coupling is rather weak. This is expected since the coupling 
in system \eqref{1.et2.n0}-\eqref{1.et2.vecn} is weak too.
\qed
\end{remark}


\subsection{Third model}\label{sec.et3}

\begin{theorem}[Spin energy-transport model for $\vec{a}=\lambda\vec{c}$]
\label{thm.ac}
Under the assumption $\vec{a}=\lambda\vec{c}$ for some $\lambda=\lambda(x,t)\ge 0$,
system \eqref{3.etn0}-\eqref{3.etvecw} can be written in the variables
$(n_\pm,W_\pm,\vec{s})$ as \eqref{1.et3.npm}-\eqref{1.et3.J},
where $(n_\pm,T_\pm,\vec{s})$ are linked to $(n_0,\vec{n},W_0,\vec{W})$ via
\begin{equation}\label{3.nmp}
  n_\pm = n_0\pm|\vec{n}|, \quad \frac32 n_\pm T_\pm = W_0\pm|\vec{W}|, \quad
  \vec{s}=\frac{\vec{n}}{|\vec{n}|}.
\end{equation}
\end{theorem}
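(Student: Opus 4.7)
The plan is to parallel the proofs of Theorems~\ref{thm.czero} and~\ref{thm.azero}: substitute $\vec a=\lambda\vec c$ into the explicit Maxwellian \eqref{1.max}, split the hyperbolic factors into two Gaussian exponentials, and then evaluate all moments appearing in \eqref{3.etn0}--\eqref{3.etvecw}. Since $\lambda\ge 0$, the vector $\vec b(k)=(\lambda+|k|^2/2)\vec c$ is a nonnegative multiple of $\vec c$, so $|\vec b|=(\lambda+|k|^2/2)|\vec c|$ and $\vec b/|\vec b|=\vec\gamma:=\vec c/|\vec c|$. Expanding $\cosh$ and $\sinh$ via half-sums of exponentials yields $M_0=\tfrac12(M_++M_-)$ and $\vec M=\tfrac12(M_+-M_-)\vec\gamma$ with $M_\pm=\exp\big((a_0\pm\lambda|\vec c|)+(c_0\pm|\vec c|)|k|^2/2\big)$. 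Setting $\theta_\pm:=-1/(c_0\pm|\vec c|)$ and $K_\pm:=(2\pi\theta_\pm)^{3/2}e^{a_0\pm\lambda|\vec c|}$, integrability forces $\theta_\pm>0$ (with $\theta_+\ge\theta_-$ and $K_+\ge K_-$) and gives $M_\pm=K_\pm g_{\theta_\pm}(k)$. Integrating and invoking \eqref{3.int.gT} produces $n_0=\tfrac12(K_++K_-)$, $\vec n=\tfrac12(K_+-K_-)\vec\gamma$, $W_0=\tfrac34(K_+\theta_++K_-\theta_-)$, $\vec W=\tfrac34(K_+\theta_+-K_-\theta_-)\vec\gamma$. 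Reading off $n_\pm=K_\pm$, $T_\pm=\theta_\pm$ and $\vec s=\vec\gamma$ matches \eqref{3.nmp}; the fourth-order moments evaluate to $\tfrac16\int M_0|k|^4dk=\tfrac54(n_+T_+^2+n_-T_-^2)$ and $\tfrac16\int\vec M|k|^4dk=\tfrac54(n_+T_+^2-n_-T_-^2)\vec s$.

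The main idea is then a parallel/orthogonal decomposition of the two vector equations \eqref{3.etvecn} and \eqref{3.etvecw} along $\vec s$. Writing $\tfrac23\vec W=\alpha\vec s$ and $\vec n=\beta\vec s$ with $\alpha=\tfrac12(n_+T_+-n_-T_-)$ and $\beta=\tfrac12(n_+-n_-)$, the Leibniz rule gives
\begin{align*}
  \sum_j\pa_{x_j}\Big(\tfrac23\pa_{x_j}\vec W+\vec n\,\pa_{x_j}V\Big)
  &=\big(\Delta\alpha+\diver(\beta\na V)\big)\vec s \\
  &\quad{}+(2\na\alpha+\beta\na V)\cdot\na\vec s+\alpha\Delta\vec s.
\end{align*}
Projecting \eqref{3.etvecn} onto $\vec s$ and using $\vec s\cdot\Delta\vec s=-|\na\vec s|^2$ reproduces, together with twice \eqref{3.etn0}, the pair of scalar equations \eqref{1.et3.npm}. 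Projecting instead onto the orthogonal complement and using the cross-product identity $\vec s\times(\Delta\vec s\times\vec s)=\Delta\vec s+|\na\vec s|^2\vec s$ (which holds because $|\vec s|\equiv 1$) yields \eqref{1.et3.s} after division by $\beta$. The same parallel split applied to \eqref{3.etvecw}, combined with \eqref{3.etw0}, delivers \eqref{1.et3.nT}; the Joule-heating rearrangement uses $\tfrac23\na W_0+n_0\na V=-\tfrac12(J_{n,+}+J_{n,-})$ together with $2(\na\alpha+\beta\na V)=-(J_{n,+}-J_{n,-})$.

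The hardest part will be the bookkeeping inside the $\vec s$-component of the divergence term above: the summand $\alpha\Delta\vec s$ contributes $\alpha\vec s\cdot\Delta\vec s=-\alpha|\na\vec s|^2$, which is exactly the spin-curvature right-hand side $\mp\tfrac12(n_+T_+-n_-T_-)|\na\vec s|^2$ of \eqref{1.et3.npm}; the analogous computation on \eqref{3.etvecw} produces the $\mp\tfrac54(n_+T_+^2-n_-T_-^2)|\na\vec s|^2$ term in \eqref{1.et3.nT}. One point worth noting is dimensional: the ansatz $\vec a=\lambda\vec c$ leaves six independent moment variables, whereas \eqref{3.etn0}--\eqref{3.etvecw} is an eight-scalar system, so the closed system \eqref{1.et3.npm}--\eqref{1.et3.J} extracts the $\vec s$-equation from the tangent projection of \eqref{3.etvecn} and treats the tangent projection of \eqref{3.etvecw} as a redundant consequence of the ansatz rather than an independent equation.
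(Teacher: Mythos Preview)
Your proposal is correct and follows essentially the same route as the paper: split the Maxwellian into two Gaussians indexed by $\pm$, identify the moments with $(n_\pm,T_\pm,\vec s)$, and then decompose the vector equations \eqref{3.etvecn} and \eqref{3.etvecw} into their components parallel and orthogonal to $\vec s$. The only cosmetic difference is that the paper carries out the orthogonal projection by applying the matrix $P=(\mathbb I-\vec s\otimes\vec s)/|\vec n|$ to \eqref{3.etvecn} rather than via your direct Leibniz expansion, and it leaves your closing observation about the tangential part of \eqref{3.etvecw} implicit.
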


\begin{proof}
First, we compute the moments in order to make system \eqref{3.etn0}-\eqref{3.etvecw}
explicit. Under the assumption that $\vec{a}=\lambda\vec{c}$ for some 
$\lambda\ge 0$, the Pauli components of the Maxwellian become
\begin{align*}
  M_0 &= \frac12\exp\bigg(a_0 + \lambda|\vec{c}| + \frac12(c_0+|\vec{c}|)|k|^2\bigg)
	+ \frac12\exp\bigg(a_0 - \lambda|\vec{c}| + \frac12(c_0-|\vec{c}|)|k|^2\bigg), \\
	\vec{M} &= \frac12\bigg\{\exp\bigg(a_0 + \lambda|\vec{c}| 
	+ \frac12(c_0+|\vec{c}|)|k|^2\bigg) - \exp\bigg(a_0 - \lambda|\vec{c}|
	+ \frac12(c_0-|\vec{c}|)|k|^2\bigg)\bigg\}\frac{\vec{c}}{|\vec{c}|}.
\end{align*}
Introducing the new Lagrange multipliers
$$
  \kappa_\pm := (2\pi\theta_\pm)^{3/2}e^{a_0\pm\lambda|\vec{c}|}, \quad
	\theta_\pm := -\frac{1}{c_0\pm|\vec{c}|}, \quad 
	\vec{\gamma} := \frac{\vec{c}}{|\vec{c}|},
$$
the Pauli components of $M$ can be rewritten as
$$
  M_0 = \frac12(k_+g_{\theta_+} + k_- g_{\theta_-}), \quad
	\vec{M} = \frac12(k_+ g_{\theta_+} - k_- g_{\theta_-})\vec{\gamma},
$$
where $g_{\theta_\pm}$ is defined in \eqref{3.gT}. Since the Maxwellian
has to be integrable, we have $c_0+|\vec{c}|<0$ and consequently,
$\theta_+\ge \theta_->0$ and $\kappa_+\ge \kappa_->0$. It follows that
\begin{align*}
  n_0 &= \frac12(\kappa_+ + \kappa_-), & \vec{n} 
	&= \frac12(\kappa_+ - \kappa_-)\vec{\gamma}, \\
	W_0 &= \frac34(\kappa_+\theta_+ + \kappa_-\theta_-), &
	\vec{W} &= \frac34(\kappa_+\theta_+ - \kappa_-\theta_-)\vec{\gamma}.
\end{align*}
These expressions allow us to identify the new Lagrange multipliers with
$n_\pm=n_0\pm|\vec{n}|$, $W_\pm=W_0\pm|\vec{W}|$, and $\vec{s}=\vec{n}/|\vec{n}|$:
$$
  n_\pm = k_\pm, \quad W_\pm = \frac32 k_\pm\theta_\pm, \quad
	\vec{s} = \frac{\vec{n}}{|\vec{n}|} = \frac{\vec{W}}{|\vec{W}|} = \vec{\gamma}.
$$
The last expression represents a constraint of the spin part of the particle
density and energy. Moreover, the definition $T_\pm = 2W_\pm/(3n_\pm)$ implies that
$T_\pm = \theta_\pm$. Thus, the Pauli components of the Maxwellian take the form
\begin{equation}\label{3.max3}
  M_0 = \frac12(n_+ g_{T_+} + n_- g_{T_-}), \quad
	\vec{M} = \frac12(n_+ g_{T_+} - n_- g_{T_-})\vec{s}.
\end{equation}

Computing the higher-order moments
\begin{align*}
  \frac16\int_{\R^3}M_0|k|^4 dk &= \frac{1}{12}\int_{\R^3}
	(n_+ g_{\theta_+} + n_- g_{\theta_-})|k|^4 dk = \frac54(n_+ T_+^2 + n_- T_-^2), \\
	 \frac16\int_{\R^3}\vec{M}|k|^4 dk &= \frac54(n_+ T_+^2 - n_- T_-^2)\vec{s},
\end{align*}
system \eqref{3.etn0}-\eqref{3.etvecw} becomes
\begin{align}
  & \pa_t n_0 - \diver\bigg(\frac23\na W_0 + n_0\na V\bigg) = 0, \label{3.aux1} \\
	& \pa_t\vec{n} - \diver\bigg(\frac23\na\vec{W} + \vec{n}\na V\bigg)
	+ \vec{\Omega}_{\rm e}\times\vec{n} = -\frac{\vec{n}}{\tau_{\rm sf}}, 
	\label{3.aux2} \\
	& \pa_t W_0 - \diver\bigg(\frac59\na\bigg(\frac{W_+^2}{n_+} + \frac{W_-^2}{n_-}
	\bigg) + \frac53 W_0\na V\bigg) - \bigg(\frac23\na W_0 + n_0\na V\bigg)\cdot\na V 
	= 0, \label{3.aux3} \\
	& \pa_t\vec{W} - \diver\bigg(\frac59\na
	\bigg(\bigg(\frac{W_+^2}{n_+} + \frac{W_-^2}{n_-}\bigg)\vec{s}\bigg) 
	+ \frac53 \vec{W}\na V\bigg) - \bigg(\frac23\na\vec{W} + \vec{n}\na V\bigg)
	\cdot\na V \label{3.aux4} \\
	&\phantom{xxm}{}+ \vec{\Omega}_{\rm e}\times\vec{W} 
	= -\frac{\vec{W}}{\tau_{\rm sf}}. \nonumber
\end{align}

The next step is to reformulate this system in terms of $(n_\pm,W_\pm,\vec{s})$.
First, we derive \eqref{1.et3.npm}. For this, we take the scalar product of 
\eqref{3.aux2} and 
$\vec{s}=\vec{n}/|\vec{n}|=\vec{W}/|\vec{W}|$, leading to 
\begin{equation}\label{3.aux5}
  \pa_t|\vec{n}| - \diver\bigg(\frac23\na|\vec{W}| + |\vec{n}|\na V\bigg)
	+ \na\vec{s}\cdot\bigg(\frac23\na\vec{W} + \vec{n}\na V\bigg) 
	= -\frac{|\vec{n}|}{\tau_{\rm sf}}.
\end{equation}
Observing that $|\vec{s}|=1$ implies that $\na\vec{s}\cdot\vec{s}=0$, we find that
$\na\vec{s}\cdot\na\vec{W} = \na\vec{s}\cdot\na(|\vec{W}|\vec{s}) = |\vec{W}|
|\na\vec{s}|^2$ and $\na\vec{s}\cdot\vec{n}=0$. Hence, \eqref{3.aux5} becomes
$$
  \pa_t|\vec{n}| - \diver\bigg(\frac23\na|\vec{W}| + |\vec{n}|\na V\bigg)
	+ \frac23|\vec{W}||\na\vec{s}|^2 = -\frac{|\vec{n}|}{\tau_{\rm sf}}.
$$
Taking the sum and difference of equation \eqref{3.etn0} for $n_0$ 
and the previous equation,
we obtain \eqref{1.et3.npm} using $n_\pm=n_0\pm|\vec{n}|$ and
$|\vec{W}|=\frac34(n_+T_+ - n_-T_-)$.

Second, we derive \eqref{1.et3.nT}. Multiplying \eqref{3.aux4} by
$\vec{s}=\vec{W}/|\vec{W}|$ yields
\begin{align*}
  \pa_t|\vec{W}| &- \diver\bigg(\frac59\na\bigg(\frac{W_+^2}{n_+}-\frac{W_-^2}{n_-}
	\bigg) + \frac53|\vec{W}|\na V\bigg) \\
	&{}+ \na\vec{s}\cdot\bigg(\frac59\na\bigg(\bigg(\frac{W_+^2}{n_+}-\frac{W_-^2}{n_-}
	\bigg)\vec{s}\bigg) + \frac53|\vec{W}|\na V\bigg) \\
	&{}- \vec{s}\cdot\bigg(\frac23(\na V\cdot\na)\vec{W} + \vec{n}|\na V|^2\bigg)
	= -\frac{|\vec{W}|}{\tau_{\rm sf}},
\end{align*}
and adding and subtracting this equation from \eqref{3.aux3} and employing
$\vec{s}\cdot\vec{n}=|\vec{n}|$ shows that
$\frac32 n_\pm T_\pm=W_\pm=W_0\pm|\vec{W}|$ solves \eqref{1.et3.nT}.

Third, we derive \eqref{1.et3.s}. We take the product of 
$P:=({\mathbb I}-\vec{s}\otimes\vec{s})/|\vec{n}|$ and \eqref{3.aux2}
(here, $\mathbb I$ is the unit matrix in $\R^{3\times 3}$.) 
This matrix has the following properties: $P\vec{n}=0$,
$P\pa_t\vec{n}=\pa_t\vec{s}$, and $P\na\vec{n}=\na\vec{s}$.
A computation shows that
\begin{equation}\label{3.aux6}
  \pa_t\vec{s} - \diver\bigg(P\bigg(\frac23\na\vec{W}+\vec{n}\na V\bigg)\bigg)
	+ \na P\cdot\bigg(\frac23\na\vec{W}+\vec{n}\na V\bigg) + \vec{\Omega}_{\rm e}
	\times\vec{s} = 0.
\end{equation}
We reformulate the second and third term:
\begin{align*}
  P\bigg(\frac23\na\vec{W}+\vec{n}\na V\bigg)
	&= \frac23\frac{|\vec{W}|}{|\vec{n}|}\frac{1}{|\vec{W}|}
	({\mathbb I}-\vec{s}\otimes\vec{s})\na\vec{W} 
	=  \frac23\frac{|\vec{W}|}{|\vec{n}|}\na\vec{s}, \\
	(\na P\cdot\na\vec{W})_i &= \sum_{j=1}^3\na P_{ij}\cdot\na W_j \\
	&= \sum_{j=1}^3\bigg(\frac{1}{|\vec{n}|^2}(\delta_{ij}-s_is_j)\na|\vec{n}|
	- \frac{1}{|\vec{n}|}(s_i\na s_j+s_j\na s_i)\bigg)\cdot\na W_j \\
	&= -\frac{1}{|\vec{n}|^2}|\vec{W}|\na|\vec{n}|\cdot\na s_i
	- \frac{1}{|\vec{n}|}|\vec{W}||\na \vec{s}|^2 s_i 
	- \frac{1}{|\vec{n}|}\na|\vec{W}|\na s_i \\
	&= -\frac{|\vec{W}|}{|\vec{n}|}\na\log(|\vec{n}||\vec{W}|)\cdot\na s_i
	- \frac{|\vec{W}|}{|\vec{n}|}|\na\vec{s}|^2 s_i, \\
	\na P\cdot\vec{n}\na V &= (\na V\cdot\na)(P\vec{n})
	- P(\na V\cdot\na)\vec{n} = -\na V\cdot\na\vec{s}.
\end{align*}
Therefore, \eqref{3.aux6} becomes
\begin{align*}
  \pa_t\vec{s} &= \diver\bigg(\frac23\frac{|\vec{W}|}{|\vec{n}|}\na\vec{s}\bigg)
	+ \bigg(\frac23\frac{|\vec{W}|}{|\vec{n}|}\na\log(|\vec{n}||\vec{W}|)
	+ \na V\bigg)\cdot\na\vec{s} + \frac23\frac{|\vec{W}|}{|\vec{n}|}|\na\vec{s}|^2
	\vec{s} - \vec{\Omega}_{\rm e}\times\vec{s} \\
	&= \frac23\frac{|\vec{W}|}{|\vec{n}|}(\Delta\vec{s}+|\na\vec{s}|^2\vec{s})
	+ \bigg\{\na\bigg(\frac23\frac{|\vec{W}|}{|\vec{n}|}\bigg)
	+ \frac23\frac{|\vec{W}|}{|\vec{n}|}\na\log(|\vec{n}||\vec{W}|) + \na V\bigg\}
	\cdot\na\vec{s} - \vec{\Omega}_{\rm e}\times\vec{s} \\
	&= \frac23\frac{|\vec{W}|}{|\vec{n}|}\vec{s}\times(\Delta\vec{s}\times\vec{s})
	+ \bigg(\frac43\frac{\na|\vec{W}|}{|\vec{n}|} + \na V\bigg)\cdot\na\vec{s}
	- \vec{\Omega}_{\rm e}\times\vec{s} \\
	&= \frac23\frac{W_+ - W_-}{n_+ - n_-}\vec{s}\times(\Delta\vec{s}\times\vec{s})
	+ \bigg(\frac43\frac{\na(W_+ - W_-)}{n_+ - n_-} + \na V\bigg)\cdot\na\vec{s}
	- \vec{\Omega}_{\rm e}\times\vec{s}.
\end{align*}
Then, using $W_\pm=\frac32n_\pm T_\pm$, equation \eqref{1.et3.s} follows.
\end{proof}

\begin{remark}\label{rem.LLG}\rm
If the temperature is constant, $T_+=T_-=1$, equation \eqref{1.et3.s} for
the spin accumulation vector becomes
$$
  \pa_t\vec{s} - \vec{s}\times(\Delta\vec{s}\times\vec{s})
	= \na(\log|\vec{n}|^2 + V)\cdot\na\vec{s} - \vec{\Omega}_{\rm e}\times\vec{s}.
$$
If $\vec{\Omega}_{\rm e}=\Delta\vec{s}$, this resembles the Landau-Lifshitz
equation with the exception of the first term on the right-hand side,
which provides an additional field contribution. Note that this term does not
vanish in termal equilibrium where $V=-\log n_0$.
\qed
\end{remark}


\section{Entropy structure}\label{sec.ent}

In this section, we investigate the entropy structure of 
the spin energy-transport equations derived in the previous section.
Recall that the entropy of the general model is given by
$$
  H = \int_{\R^3}\int_{\R^3}\operatorname{tr}(M\log M)dkdx,
$$
where $M=M_0\sigma_0 + \vec{M}\cdot\vec{\sigma}$ is the Maxwellian.
We introduce $M_\pm=M_0\pm|\vec{M}|$ and 
$P_\pm=\frac12(\sigma_0\pm(\vec{M}/|\vec{M}|)\cdot\vec{\sigma})$.
Then $(P_+,P_-)$ is a set of complete orthogonal projections since
$P_\pm^2=P_\pm$, $P_+P_-=0$, and $P_+ + P_-=\sigma_0$. Therefore, for any
function $f:\R\to\R$, 
\begin{align*}
  f(M) &= f(M_+)P_+ + f(M_-)P_- \\
	&= \frac12\big(f(M_+)+f(M_-)\big) \sigma_0
	+ \frac12\big(f(M_+)-f(M_-)\big)\frac{\vec{M}}{|\vec{M}|}\cdot\vec{\sigma}.
\end{align*}
In particular, since the Pauli matrices are traceless,
\begin{equation}\label{4.H}
  H = \frac12\int_{\R^3}\int_{\R^3}(M_+\log M_+ + M_-\log M_-)dkdx.
\end{equation}

\subsection{Entropy inequality for the first model}\label{sec.ent1}

We wish to explore the entropy structure of the first model 
\eqref{1.et1.n0}-\eqref{1.et1.vecn} ($\vec{c}=0$), neglecting the electric field:
\begin{equation}
  \pa_t n_0 = \Delta(n_0T), \quad \frac32\pa_t(n_0T) = \frac52\Delta(n_0T^2), \quad
	\pa_t\vec{n} = \Delta(\vec{n}T) - \vec{\Omega}_{\rm e}\times\vec{n}
  - \frac{\vec{n}}{\tau_{\rm sf}}, \label{4.m1}
\end{equation}
where $x\in\R^3$, $t>0$.
We claim that the entropy is given by \eqref{1.H1}. 
Indeed, since $\vec{c}=0$ by assumption,
$M=g_T(k)(n_0\sigma_0 + \vec{n}\cdot\vec{\sigma})$, where $g_T(k)$ is defined
in \eqref{3.gT} (see the proof of Theorem \ref{thm.czero}). 
Then $M_\pm = g_T(k)n_\pm$ and \eqref{4.H} shows that
\begin{align*}
  H_1 &= \frac12\int_{\R^3}\int_{\R^3}g_T(k)\big(n_+\log(n_+ g_T(k))
	+ n_- \log(n_- g_T(k))\big)dxdk \\
	&= \frac12\int_{\R^3}(n_+\log n_+ + n_-\log n_-)dx
	+ \int_{\R^3}(n_+ + n_-)\int_{\R^3}g_T(k)\log g_T(k)dk dx \\
	&= \frac12\int_{\R^3}(n_+\log n_+ + n_-\log n_-)dx
	- \int_{\R^3}(n_+ + n_-)\bigg(\frac32 + \frac32\log(2\pi T)\bigg)dx.
\end{align*}
Thus, since $\int_{\R^3}(n_+ + n_-)dx$ is constant in time, we find that,
up to a constant,
$$
  H_1 = \int_{\R^3}\big(n_+\log(n_+ T^{-3/2})+n_-\log(n_- T^{-3/2})\big)dx,
$$
which is exactly \eqref{1.H1}. Recall that $n_\pm=n_0\pm|\vec{n}|$.

\begin{proposition}[Entropy inequality for system \eqref{4.m1}]\label{prop.ent1}
The entropy \eqref{1.H1}, considered as a function of time, is nonincreasing 
along (smooth) solutions $(n_0,T,\vec{n})$ to \eqref{4.m1}, and
\begin{align}\label{41.ent1}
  \frac{dH_1}{dt} &+ 4\int_{\R^3}\big(|\na\sqrt{n_+ T}|^2 + |\na\sqrt{n_- T}|^2\big)dx
	+ 20\int_{\R^3}n_0|\na\sqrt{T}|^2\big)dx \\
	&{}+ \frac12\int_{\R^3}(n_+ - n_-)
  (\log n_+ - \log n_-)\bigg(\frac{1}{\tau_{\rm sf}} 
	+ T\bigg|\na\frac{\vec{n}}{|\vec{n}|}\bigg|^2\bigg)dx = 0. \nonumber
\end{align}
\end{proposition}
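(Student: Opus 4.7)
The plan is to work in the spin-up/spin-down variables $n_\pm = n_0 \pm |\vec{n}|$ together with $T$, since the entropy $H_1$ is naturally written in these variables. First I would derive evolution equations for $n_\pm$. Taking the scalar product of the third equation in \eqref{4.m1} with $\vec{s} = \vec{n}/|\vec{n}|$ and using $|\vec{s}|=1$ (so $\vec{s}\cdot\nabla\vec{s}=0$ and $\vec{s}\cdot\Delta\vec{s}=-|\nabla\vec{s}|^2$) together with the elementary identity $\vec{s}\cdot\Delta(\vec{n}T)=\Delta(|\vec{n}|T)-|\vec{n}|T|\nabla\vec{s}|^2$ and $\vec{s}\cdot(\vec{\Omega}_{\rm e}\times\vec{n})=0$ yields
\begin{equation*}
\pa_t|\vec{n}| = \Delta(|\vec{n}|T) - |\vec{n}|T|\nabla\vec{s}|^2 - \frac{|\vec{n}|}{\tau_{\rm sf}}.
\end{equation*}
Adding and subtracting this to $\pa_t n_0 = \Delta(n_0 T)$ gives
\begin{equation*}
\pa_t n_\pm = \Delta(n_\pm T) \mp R, \qquad R := \frac{n_+ - n_-}{2}\Big(\frac{1}{\tau_{\rm sf}} + T|\nabla\vec{s}|^2\Big).
\end{equation*}

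Next I would apply the chain rule to $H_1$. Writing $\phi_\pm := \log n_\pm - \frac{3}{2}\log T$ so that $H_1 = \int(n_+\phi_+ + n_-\phi_-)dx$, a direct computation gives
\begin{equation*}
\frac{dH_1}{dt} = \int\Big[(\phi_+ +1)\pa_t n_+ + (\phi_- +1)\pa_t n_- - \frac{3(n_+ + n_-)}{2T}\pa_t T\Big]dx.
\end{equation*}
Plugging in the $n_\pm$ equations, the spin-flip/$|\nabla\vec s|^2$ contributions give $-\int R(\phi_+ - \phi_-)dx = -\int R(\log n_+ - \log n_-)dx$, which is exactly the last term in \eqref{41.ent1}. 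For the diffusive parts, integrate by parts (assuming decay at infinity so boundary terms vanish); an expansion of $\nabla\phi_\pm\cdot\nabla(n_\pm T)$ produces
\begin{equation*}
T\Big(\frac{|\na n_+|^2}{n_+} + \frac{|\na n_-|^2}{n_-}\Big) - \frac{1}{2}\na T\cdot\na(n_+ + n_-) - \frac{3 n_0 |\na T|^2}{T}.
\end{equation*}

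For the temperature term, I would use the energy equation together with $\pa_t n_0 = \Delta(n_0 T)$ to write $\frac{3}{2}n_0\pa_t T = \frac{5}{2}\Delta(n_0 T^2) - \frac{3}{2}T\Delta(n_0 T)$; substituting and integrating by parts, the $\Delta(n_0 T)$ piece contributes nothing (its integral vanishes) while $-\int\frac{5}{T}\Delta(n_0 T^2)dx$ becomes $-5\int\na T\cdot\na n_0\,dx - 10\int n_0|\na T|^2/T\,dx$ after expanding $\nabla(n_0T^2)=T^2\nabla n_0+2n_0T\nabla T$. Combining all contributions and using $n_+ + n_- = 2n_0$, the total dissipation reads
\begin{equation*}
T\Big(\frac{|\na n_+|^2}{n_+} + \frac{|\na n_-|^2}{n_-}\Big) + 4\,\na T\cdot\na n_0 + \frac{7 n_0 |\na T|^2}{T}.
\end{equation*}

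The final step is the algebraic identification of this dissipation with the square-root form stated in \eqref{41.ent1}. Using $4|\na\sqrt{f}|^2 = |\na f|^2/f$, a direct expansion of $|\na(n_\pm T)|^2/(n_\pm T)$ gives
\begin{equation*}
4\big(|\na\sqrt{n_+T}|^2 + |\na\sqrt{n_-T}|^2\big) = T\Big(\frac{|\na n_+|^2}{n_+} + \frac{|\na n_-|^2}{n_-}\Big) + 4\,\na T\cdot\na n_0 + \frac{2 n_0|\na T|^2}{T},
\end{equation*}
and $20 n_0 |\na\sqrt{T}|^2 = 5 n_0 |\na T|^2/T$; summing these matches the dissipation exactly, proving \eqref{41.ent1} and hence monotonicity. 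The main obstacle is not conceptual but bookkeeping: tracking the three cross terms $\na T\cdot\na n_0$, $n_0|\na T|^2/T$, and $T|\na n_\pm|^2/n_\pm$ through both the entropy chain rule and the energy substitution, and then recognizing that the coefficients $(4,7)$ in the combined expression split correctly into $(4,2)+(0,5)$ to form the claimed sum of $|\na\sqrt{\cdot}|^2$ integrals.
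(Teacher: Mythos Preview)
Your proof is correct and follows essentially the same route as the paper: both arguments project the $\vec{n}$-equation onto $\vec{s}=\vec{n}/|\vec{n}|$ to extract the $|\vec{n}|$ (equivalently $n_\pm$) dynamics, integrate the diffusion and energy terms by parts, and then identify the resulting quadratic form with the stated $|\nabla\sqrt{\cdot}|^2$ expressions. The only cosmetic difference is that you first pass to the $n_\pm$ variables and compute there, whereas the paper stays in $(n_0,\vec{n},T)$ and converts to $n_\pm$ only in the final step; the algebra and all key identities are the same.
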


\begin{proof}
We compute
\begin{align}
  \frac{dH_1}{dt} &= \int_{\R^3}\sum_{s=\pm}\bigg(\log(n_s T^{-3/2})\pa_t n_s
	- \frac32\frac{1}{T}\pa_t(n_s T)\bigg)dx \nonumber \\
	&= \int_{\R^3}\bigg(\log\big((n_0+|\vec{n}|)T^{-3/2}\big)\pa_t(n_0+|\vec{n}|)
	- \frac32\frac{1}{T}\pa_t(n_0T+|\vec{n}|T) \nonumber \\
	&\phantom{xx}{}+ \log\big((n_0-|\vec{n}|)T^{-3/2}\big)\pa_t(n_0-|\vec{n}|)
	- \frac32\frac{1}{T}\pa_t(n_0T-|\vec{n}|T)\bigg)dx \nonumber \\
	&= \int_{\R^3}\bigg\{\log\bigg(\frac{n_0^2-|\vec{n}|^2}{T^3}\bigg)\pa_t n_0
	+ \log\bigg(\frac{n_0+|\vec{n}|}{n_0-|\vec{n}|}\bigg)\frac{\vec{n}}{|\vec{n}|}\cdot
	\pa_t\vec{n} - \frac{2}{T}\pa_t\bigg(\frac32 n_0T\bigg)\bigg\}dx. \label{41.aux}
\end{align}
Inserting \eqref{1.et1.n0} in the first term and integrating by parts, we find that
$$
  \int_{\R^3}\log\bigg(\frac{n_0^2-|\vec{n}|^2}{T^3}\bigg)\pa_t n_0 dx
	= -\int_{\R^3}\na\log\bigg(\frac{n_0^2-|\vec{n}|^2}{T^3}\bigg)\cdot\na(n_0T)dx.
$$
Furthermore, using \eqref{1.et1.vecn} in the second term on the right-hand side
of \eqref{41.aux} and integrating by parts gives
\begin{align*}
  \int_{\R^3} & \log\bigg(\frac{n_0+|\vec{n}|}{n_0-|\vec{n}|}\bigg)
	\frac{\vec{n}}{|\vec{n}|}\cdot\pa_t\vec{n}dx
	= -\int_{\R^3}\na\bigg(\log\bigg(\frac{n_0+|\vec{n}|}{n_0-|\vec{n}|}\bigg)
	\bigg)\frac{\vec{n}}{|\vec{n}|}\cdot\na(\vec{n}T)dx \\
	&{}- \int_{\R^3}\log\bigg(\frac{n_0+|\vec{n}|}{n_0-|\vec{n}|}\bigg)
	\na\frac{\vec{n}}{|\vec{n}|}\cdot\na(\vec{n}T)dx
	- \frac{1}{\tau_{\rm sf}}\int_{\R^3}|\vec{n}|
	\log\bigg(\frac{n_0+|\vec{n}|}{n_0-|\vec{n}|}\bigg)dx.
\end{align*}
Since $\na\vec{n}\cdot\vec{n}=0$ and 
\begin{align*}
  \na\frac{\vec{n}}{|\vec{n}|}\cdot\na(\vec{n}T)
	&= \na\frac{\vec{n}}{|\vec{n}|}\cdot\na\bigg(|\vec{n}|T\frac{\vec{n}}{|\vec{n}|}
	\bigg) = |\vec{n}|T\bigg|\na\frac{\vec{n}}{|\vec{n}|}\bigg|^2, \\
	\frac{\vec{n}}{|\vec{n}|}\cdot\na(\vec{n}T)
	&= \frac{\vec{n}}{|\vec{n}|}\cdot(T\vec{n} + \vec{n}\na T)
	= T\na|\vec{n}| + |\vec{n}|\na T = \na(|\vec{n}|T),
\end{align*}
it follows that
\begin{align*}
  \int_{\R^3} & \log\bigg(\frac{n_0+|\vec{n}|}{n_0-|\vec{n}|}\bigg)
	\frac{\vec{n}}{|\vec{n}|}\cdot\pa_t\vec{n}dx
	= -\int_{\R^3}\log\bigg(\frac{n_0+|\vec{n}|}{n_0-|\vec{n}|}\bigg)
	\cdot\na(|\vec{n}|T)dx \\
	&{}- \int_{\R^3}\log\bigg(\frac{n_0+|\vec{n}|}{n_0-|\vec{n}|}\bigg)
	|\vec{n}|T\bigg|\na\frac{\vec{n}}{|\vec{n}|}\bigg|^2dx
	- \frac{1}{\tau_{\rm sf}}\int_{\R^3}|\vec{n}|
	\log\bigg(\frac{n_0+|\vec{n}|}{n_0-|\vec{n}|}\bigg)dx.
\end{align*}
Finally, we employ \eqref{1.et1.n0T} to reformulate the last term on the
right-hand side of \eqref{41.aux}:
\begin{align*}
  -\int_{\R^3}\frac{2}{T}\pa_t\bigg(\frac32 n_0T\bigg)dx
	&= 5\int_{\R^3}\na\frac{1}{T}\cdot\na(n_0T^2)dx \\
	&= -5\int_{\R^3}\na\log T\cdot\na(n_0T)dx
	- 5\int_{\R^3}\frac{n_0}{T}|\na T|^2 dx.
\end{align*}

Summarizing these expressions, we have
\begin{align*}
  \frac{dH_1}{dt}
	&= -\int_{\R^3}\bigg\{
	\na\log\bigg(\frac{n_0^2-|\vec{n}|^2}{T^3}\bigg)\cdot\na(n_0T)dx
	+\na\log\bigg(\frac{n_0+|\vec{n}|}{n_0-|\vec{n}|}\bigg)\cdot
	\na(|\vec{n}|T) \\
	&\phantom{xx}{}+ 5\na\log T\cdot\na(n_0T) + 5\frac{n_0}{T}|\na T|^2\bigg\}dx \\
  &\phantom{xx}{}- \int_{\R^3}\bigg\{
	\log\bigg(\frac{n_0+|\vec{n}|}{n_0-|\vec{n}|}\bigg)
	|\vec{n}|T\bigg|\na\frac{\vec{n}}{|\vec{n}|}\bigg|^2
	+ \frac{1}{\tau_{\rm sf}}|\vec{n}|
	\log\bigg(\frac{n_0+|\vec{n}|}{n_0-|\vec{n}|}\bigg)
	\bigg\}dx \\
	&= I_1 + I_2.
\end{align*}
The integrals in $I_2$ correspond, up to the minus sign, the second and third
integrals in \eqref{41.ent1}. It remains to show that $I_1$ corresponds to
the first integral in \eqref{41.ent1}, up to the sign. Indeed, since
$\log(n_0^2-|\vec{n}|^2)=\log n_+ + \log n_-$ and
$\log((n_0+|\vec{n}|)/(n_0-|\vec{n}|))=\log n_+ - \log n_-$, we have
\begin{align*}
  I_1 &= -\int_{\R^3}\na\log(n_0^2-|\vec{n}|^2)\cdot\na(n_0T)dx \\
	&\phantom{xx}{}- \int_{\R^3}\na\log\bigg(\frac{n_0+|\vec{n}|}{n_0-|\vec{n}|}\bigg)
	\cdot\na(|\vec{n}|T)dx - 2\int_{\R^3}\na\log T\cdot\na(n_0T)dx \\
  &= -\int_{\R^3}\big(\na\log n_+\cdot\na(n_+T) + \na\log n_-\cdot\na(n_-T)
	+ \na\log T\cdot\na(n_+T + n_-T)\big)dx \\
	&= -\int_{\R^3}\big(\na\log(n_+T)\cdot\na(n_+T) + \na\log(n_-T)\cdot\na(n_-T)
	\big)dx.
\end{align*}
This ends the proof.
\end{proof}

\begin{remark}\label{rem.ent1}\rm
When system \eqref{1.et1.n0}-\eqref{1.et1.vecn} includes the electric field,
a computation similar to the proof of Proposition \ref{prop.ent1}
shows that the entropy-production identity reads as
\begin{align*}
  \frac{dH_1}{dt} &+ \int_{\R^3}\bigg(
	\frac{|\na(n_+ T) + n_+T\na V|^2}{n_+T}
	+ \frac{|\na(n_- T) + n_-T\na V|^2}{n_-T}\bigg)dx \\
	&{}+ 10\int_{\R^3}(n_+ + n_-)|\na\sqrt{T}|^2\big)dx \\
	&{}+ \frac12\int_{\R^3}(n_+ - n_-)
  (\log n_+ - \log n_-)\bigg(\frac{1}{\tau_{\rm sf}} 
	+ T\bigg|\na\frac{\vec{n}}{|\vec{n}|}\bigg|^2\bigg)dx = 0.
\end{align*}
Thus, the presence of the electric field complicates the existence of a priori
bounds.
\qed
\end{remark}


\subsection{Entropy inequality for the second model}\label{sec.ent2}

We show that there exists an entropy 
for the second model \eqref{3.et2.n02}-\eqref{3.et2.vecw2} ($\vec{a}=0$)
for vanishing electric field,
\begin{equation}\label{41.m2}
  \pa_t n_0 = \Delta(n_0T), \quad \frac32\pa_t(n_0T) = \Delta Z_0, \quad
	\pa_t\vec{W} = \Delta \vec{Z} - \vec{\Omega}_{\rm e}\times\vec{W}
	- \frac{\vec{W}}{\tau_{\rm sf}},
\end{equation}
where $x\in\R^3$, $t>0$, and 
$(Z_0,\vec{Z})$ are defined in \eqref{32.Z0}-\eqref{32.vecZ}, i.e.
\begin{align*}
	Z_0 &= \frac{5}{18n_0}\big(W_+^{3/5}+W_-^{3/5}\big)\big(W_+^{7/5}+W_-^{7/5}\big), \\
  \vec{Z} &= \frac{5}{18n_0}\big(W_+^{3/5}+W_-^{3/5}\big)
	\big(W_+^{7/5}-W_-^{7/5}\big)\frac{\vec{W}}{|\vec{W}|}.
\end{align*}

We claim that the general entropy \eqref{1.H} 
becomes an entropy for the second model when
the Maxwellian $M=M_0\sigma_0+\vec{M}\cdot\vec{\sigma}$ is given by \eqref{32.M},
and this entropy equals, up to a constant, \eqref{1.H2}.
Note that if $\vec{W}=0$, we obtain $W_\pm=W_0=\frac32n_0T$ and 
$H_2=\int_{\R^d}n_0\log(n_0T^{-3/2})dx$, up to a constant. This function
corresponds to the entropy of the semiclassical energy-transport model
\cite[Chapter~6]{Jue09}. 

To show that \eqref{1.H} reduces to \eqref{1.H2},
we may employ \eqref{4.H} but we prefer to proceed in a slightly different way.
We observe that the Pauli matrices are traceless and we employ the formula  
$(\vec{c}\cdot\vec{\sigma})(\vec{M}\cdot\vec{\sigma})
= (\vec{c}\cdot\vec{M})\sigma_0 + i(\vec{c}\times\vec{M})\cdot\vec{\sigma}$
(see \cite[(7)]{PoNe11}) to infer that
\begin{align*}
  H_2 &= \frac12\operatorname{tr}\int_{\R^3}\int_{\R^3}
	(M_0+\vec{M}\cdot\vec{\sigma})
	\bigg(a_0\sigma_0 + (c_0\sigma_0+\vec{c}\cdot\vec{\sigma})
	\frac{|k|^2}{2}\bigg)dkdx \\
	&= \int_{\R^3}\int_{\R^3}\bigg(a_0M_0 + c_0M_0\frac{|k|^2}{2} 
	+ \vec{c}\cdot\vec{M}\frac{|k|^2}{2}\bigg)dkdx \\
	&= \int_{\R^3}\big(a_0n_0 + c_0W_0 + \vec{c}\cdot\vec{W}\big)dx.
\end{align*}
The Lagrange multiplier $a_0$ can be written in the following way, using
the first equation in \eqref{32.K} and \eqref{32.K2}:
$$
  a_0 = \log\frac{K}{(2\pi)^{3/2}} 
	= \frac52\log n_0 - \frac52\log(W_+^{3/5}+W_-^{3/5})
	+ \log(2\cdot 3^{3/2}(2\pi)^{-3/2}).
$$
Observing that $\int_{\R^3}n_0 dx$ is constant in time, it holds that, 
up to a constant,
$$
  \int_{\R^3}a_0n_0 dx = \frac52\int_{\R^3}n_0\log\frac{n_0}{W_+^{3/5}+W_-^{3/5}}dx.
$$
By the second equation in \eqref{32.K}, we have $c_0\pm|\vec{c}|=-1/\theta_\pm$,
which yields
$$
  c_0 = -\frac12\bigg(\frac{1}{\theta_+} + \frac{1}{\theta_-}\bigg), \quad
	|\vec{c}| = \frac12\bigg(\frac{1}{\theta_+} - \frac{1}{\theta_-}\bigg).
$$
Furthermore, employing the third equation in \eqref{32.K} and \eqref{32.gamma}, 
we have $\vec{c}/|\vec{c}|=\vec{\gamma}=\vec{W}/|\vec{W}|$ which shows that
$\vec{c}\cdot\vec{W}=|\vec{c}||\vec{W}|$. Thus, replacing $\theta_\pm$
by the expression in \eqref{32.thetapm},
\begin{align*}
  2(c_0W_0+\vec{c}\cdot\vec{W})
	&= -\bigg(\frac{1}{\theta_+} + \frac{1}{\theta_-}\bigg)W_0
	+ \bigg(\frac{1}{\theta_+} - \frac{1}{\theta_-}\bigg)|\vec{W}| \\
	&= -\frac{3n_0W_0}{W_+^{3/5}+W_-^{3/5}}\bigg(\frac{1}{W_+^{2/5}}
	+ \frac{1}{W_-^{2/5}}\bigg) + \frac{3n_0|\vec{W}|}{W_+^{3/5}+W_-^{3/5}}
	\bigg(\frac{1}{W_+^{2/5}} - \frac{1}{W_-^{2/5}}\bigg) \\
	&= -\frac{3n_0}{W_+^{3/5}+W_-^{3/5}}\bigg(\frac{W_0+|\vec{W}|}{W_+^{2/5}}
	+ \frac{W_0-|\vec{W}|}{W_-^{2/5}}\bigg) = -3n_0.
\end{align*}
Neglecting this contribution as well as the constant in the expression for $a_0$, 
this shows the claim.

We show now that the entropy \eqref{1.H2} is nonincreasing in time and
that it provides some gradient estimates.

\begin{proposition}[Entropy inequality for system \eqref{41.m2}]\label{prop.ent2}
The entropy \eqref{1.H2}, considered as a function of time, is nonincreasing 
along (smooth) solutions $(n_0,T,\vec{W})$ to \eqref{41.m2} in $\R^3$, 
where $W_\pm=W_0\pm|\vec{W}|$ and $W_0=\frac32 n_0T$. Furthermore, it holds
\begin{equation}\label{41.ent}
  \frac{dH_2}{dt} + c\int_{\R^3}\big(|\na\sqrt{W_+}|^2 + |\na\sqrt{W_-}|^2
	+ T|\na\sqrt{n_0}|^2 + W_0^{-1}|(\na \vec{W})^\top|^2\big) dx \le 0,
\end{equation}
where $c>0$ is a constant and $(\na\vec{W})^\top 
= ({\mathbb I}-|\vec{W}|^{-2}\vec{W}\otimes\vec{W})\na\vec{W}$.
\end{proposition}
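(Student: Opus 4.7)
The plan is to follow the same strategy as the proof of Proposition~\ref{prop.ent1}, but work at the level of the entropic (Legendre-dual) variables. The key observation is that, as shown just above the statement, modulo time-independent constants
\[
H_2=\int(a_0\,n_0+c_0\,W_0+\vec{c}\cdot\vec{W})\,dx,
\]
so that $(a_0,c_0,\vec{c})$ are the conjugate variables of $(n_0,W_0,\vec{W})$ with respect to $H_2$ via the MaxEnt Legendre transform: $\pa H_2/\pa n_0=a_0$, $\pa H_2/\pa W_0=c_0$, $\pa H_2/\pa\vec{W}=\vec{c}$. First I would apply the chain rule, substitute \eqref{41.m2}, integrate the Laplacians by parts, and exploit $\vec{c}\parallel\vec{W}$ to cancel $\vec{c}\cdot(\vec{\Omega}_{\rm e}\times\vec{W})$ and conclude $\vec{c}\cdot\vec{W}=|\vec{c}||\vec{W}|\ge 0$, giving
\[
\frac{dH_2}{dt}=-\int\big[\na a_0\cdot\na(n_0T)+\na c_0\cdot\na Z_0+\na\vec{c}:\na\vec{Z}\big]\,dx-\tau_{\rm sf}^{-1}\int|\vec{c}||\vec{W}|\,dx.
\]

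Next, using the scalar-Maxwellian decomposition $M=M_+P_++M_-P_-$ introduced in the text, I would rewrite everything in the variables $(n_\pm,\theta_\pm,\vec{s})$, with $n_0T=(n_+\theta_++n_-\theta_-)/2$, $Z_0=\tfrac54(n_+\theta_+^2+n_-\theta_-^2)$, $|\vec{Z}|=\tfrac54(n_+\theta_+^2-n_-\theta_-^2)$, $a_0=\log(n_\pm\theta_\pm^{-3/2})+\mathrm{const}$, and $c_0\pm|\vec{c}|=-1/\theta_\pm$. Writing $\na\vec{c}=\vec{s}\otimes\na|\vec{c}|+|\vec{c}|\na\vec{s}$ and similarly for $\na\vec{Z}$, the identity $\vec{s}\cdot\na\vec{s}=0$ kills the cross terms and yields
\[
\na\vec{c}:\na\vec{Z}=\na|\vec{c}|\cdot\na|\vec{Z}|+|\vec{c}||\vec{Z}||\na\vec{s}|^2.
\]
The longitudinal part then splits algebraically into two decoupled scalar semiclassical energy-transport dissipations, one per channel $s=\pm$, each equal to $\theta_s|\na n_s|^2/n_s+2\na n_s\cdot\na\theta_s+(7n_s/(2\theta_s))|\na\theta_s|^2=4|\na\sqrt{n_s\theta_s}|^2+10 n_s|\na\sqrt{\theta_s}|^2$, which by completing the square is also bounded below by $\theta_s|\na\sqrt{n_s}|^2+3n_s|\na\sqrt{\theta_s}|^2$.

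To conclude \eqref{41.ent} I would assemble the bounds as follows. Since $W_\pm=\tfrac32 n_\pm\theta_\pm$, the $4|\na\sqrt{n_s\theta_s}|^2$ contributions yield $(4/3)\sum_\pm|\na\sqrt{W_\pm}|^2$ directly. The transverse term $|\vec{c}||\vec{Z}||\na\vec{s}|^2$ is compared to $W_0^{-1}|(\na\vec{W})^\top|^2=W_0^{-1}|\vec{W}|^2|\na\vec{s}|^2$ using $(\na\vec{W})^\top=|\vec{W}|\na\vec{s}$: parametrizing $r=\theta_+/\theta_-\ge 1$ and using the $\vec{a}=0$ constraint $n_+/n_-=r^{3/2}$, the ratio $|\vec{c}||\vec{Z}|/(|\vec{W}|^2 W_0^{-1})$ reduces to an explicit function of $r$ that is bounded below by a positive constant uniform in $r$ (numerically close to $14/15$ at $r=1$ and growing for large $r$). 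The main obstacle will be recovering the term $T|\na\sqrt{n_0}|^2$, which involves $n_0$ rather than the $n_\pm$ channels in which the dissipation naturally decouples; for this I would combine $|\na(n_++n_-)|^2\le 2(|\na n_+|^2+|\na n_-|^2)$ with the pointwise bound
\[
\frac{(n_+\theta_++n_-\theta_-)n_\pm}{(n_++n_-)^2}\le\theta_\pm,
\]
verified by the same $r$-parametrization. This gives $T|\na\sqrt{n_0}|^2\le\sum_\pm\theta_\pm|\na\sqrt{n_\pm}|^2$, controlled by the completed-square longitudinal bound above. A minor technicality is the ambiguity of $\vec{s}$ at points where $|\vec{W}|=0$, where however the transverse contribution vanishes and the longitudinal reduces to the classical semiclassical ET dissipation, posing no difficulty for smooth solutions.
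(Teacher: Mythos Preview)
Your approach is correct and genuinely different from the paper's. The paper works directly in the variables $(n_0,W_+,W_-)$ and introduces the auxiliary quantity $\lambda=n_0/(W_+^{3/5}+W_-^{3/5})$; after substituting the equations and integrating by parts, the dissipation is rewritten as a quadratic form in $\na W_s/\sqrt{W_s}$ and $\sqrt{W_s}\,\na\lambda/\lambda$, whose associated $2\times 2$ matrix is checked to be positive definite with eigenvalues exceeding $1/5$. The $T|\na\sqrt{n_0}|^2$ term is then extracted from $W_0|\na\lambda/\lambda|^2$ by expanding $\na\log\lambda$ and absorbing the remainder into the $|\na\sqrt{W_\pm}|^2$ terms via elementary inequalities on $W_\pm/W_0$. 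For the transverse part the paper uses a direct mean-value estimate $(W_-^{-2/5}-W_+^{-2/5})(W_+^{7/5}-W_-^{7/5})\ge \tfrac{28}{25}|\vec{W}|^2/W_0$ rather than your $r$-parametrization.

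Your route via the entropic variables $(a_0,c_0,\vec{c})$ and the channel decomposition $(n_\pm,\theta_\pm,\vec{s})$ is more conceptual: it makes transparent that the longitudinal dissipation is exactly the sum of two semiclassical energy-transport productions (one per spin channel), the decoupling being enabled by the constraint $\vec{a}=0$ which gives two equivalent expressions $a_0=\log(n_\pm\theta_\pm^{-3/2})+\text{const}$ for the same quantity. This buys a cleaner structural picture and a direct link to Proposition~\ref{prop.ent3}. On the other hand, the paper's choice of $\lambda$ yields the $T|\na\sqrt{n_0}|^2$ bound without needing your pointwise estimate $(n_+\theta_++n_-\theta_-)n_\pm\le\theta_\pm(n_++n_-)^2$ (which does hold under the constraint $n_+/n_-=(\theta_+/\theta_-)^{3/2}$), and its mean-value argument for the transverse term is a complete proof whereas your $r$-parametrized lower bound is only asserted numerically. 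Two minor slips to fix: your per-channel longitudinal dissipation carries an overall factor $1/2$ relative to what you wrote (harmless for positivity), and the transverse lower bound should be proved rather than observed.
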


\begin{proof}
First, we perform some auxiliary computations:
\begin{align*}
  \frac{\pa}{\pa n_0}\bigg(\frac52 n_0\log\frac{n_0}{W_+^{3/5}+W_-^{3/5}}\bigg)
	&= \frac52\bigg(\log n_0 - \log(W_+^{3/5}+W_-^{3/5}) + 1\bigg), \\
	\frac{\pa}{\pa W_0}\bigg(\frac52 n_0\log\frac{n_0}{W_+^{3/5}+W_-^{3/5}}\bigg)
	&= -\frac32 n_0\frac{W_+^{-2/5}+W_-^{-2/5}}{W_+^{3/5}+W_-^{3/5}}, \\
	\frac{\pa}{\pa \vec{W}}\bigg(\frac52 n_0\log\frac{n_0}{W_+^{3/5}+W_-^{3/5}}\bigg)
	&= -\frac32 n_0\frac{W_+^{-2/5}-W_-^{-2/5}}{W_+^{3/5}+W_-^{3/5}}
	\frac{\vec{W}}{|\vec{W}|}.
\end{align*}
Using these expressions and equations \eqref{41.m2}, it follows that
\begin{align*}
  \frac{dH_2}{dt} &= \int_{\R^3}\bigg\{\frac52\big(\log n_0 
	- \log(W_+^{3/5}+W_-^{3/5}) + 1\big)\pa_t n_0 \\
	&\phantom{xx}{}- \frac32n_0\frac{W_+^{-2/5}+W_-^{-2/5}}{W_+^{3/5}+W_-^{3/5}}\pa_t W_0
	- \frac32n_0\frac{W_+^{-2/5}-W_-^{-2/5}}{W_+^{3/5}+W_-^{3/5}}
	\frac{\vec{W}}{|\vec{W}|}\cdot\pa_t\vec{W}\bigg\}dx \\
	&= -\frac53\int_{\R^3}\na\log\frac{n_0}{W_+^{3/5}+W_-^{3/5}}\cdot\na W_0 dx \\
	&\phantom{xx}{}
	+ \frac{5}{12}\int_{\R^3}\na\frac{n_0(W_+^{-2/5}+W_-^{-2/5})}{W_+^{3/5}+W_-^{3/5}}
	\cdot\na\bigg(\frac{W_+^{3/5}+W_-^{3/5}}{n_0}(W_+^{7/5}+W_-^{7/5})\bigg)dx \\
	&\phantom{xx}{}+ \frac{5}{12}\int_{\R^3}\na\bigg(
	\frac{n_0(W_+^{-2/5}-W_-^{-2/5})}{W_+^{3/5}+W_-^{3/5}}\frac{\vec{W}}{|\vec{W}|}
	\bigg)\cdot\na\bigg(\frac{W_+^{3/5}+W_-^{3/5}}{n_0}(W_+^{7/5}-W_-^{7/5})
	\frac{\vec{W}}{|\vec{W}|}\bigg)dx \\
	&\phantom{xx}{}+ \frac{3}{2\tau_{\rm sf}}\int_{\R^3}
	\frac{n_0(W_+^{-2/5}-W_-^{-2/5})}{W_+^{3/5}+W_-^{3/5}}|\vec{W}|dx.
\end{align*}
Setting $\lambda:=n_0/(W_+^{3/5}+W_-^{3/5})$, we can rewrite $dH_2/dt$ as follows:
\begin{align*}
  \frac{dH_2}{dt} &= -\frac53\int_{\R^3}\lambda^{-1}\na\lambda\cdot\na W_0 dx \\
	&\phantom{xx}{}+ \frac{5}{12}\int_{\R^3}\na\big(\lambda(W_+^{-2/5}+W_-^{-2/5})\big)
	\cdot\na\big(\lambda^{-1}(W_+^{7/5}+W_-^{7/5})\big)dx \\
	&\phantom{xx}{}+ \frac{5}{12}\int_{\R^3}\na\bigg(\lambda(W_+^{-2/5}-W_-^{-2/5})
	\frac{\vec{W}}{|\vec{W}|}\bigg)\cdot\na\bigg(\lambda^{-1}(W_+^{7/5}-W_-^{7/5})
	\frac{\vec{W}}{|\vec{W}|}\bigg)dx \\
	&\phantom{xx}{}+ \frac{3}{2\tau_{\rm sf}}\int_{\R^3}\lambda|\vec{W}|
	(W_+^{-2/5}-W_-^{-2/5})dx \\
	&= I_1+I_2+I_3+I_4.
\end{align*}
Using $W_0=\frac12(W_++W_-)$, the first integral becomes
$$
  I_1 = -\frac56\int_{\R^3}\sum_{s=\pm}\frac{\na\lambda}{\lambda}\cdot\na W_s dx.
$$
By the product rule, the third integral $I_3$ is computed as
\begin{align*}
	I_3 &= \frac{5}{12}\int_{\R^3}\na\big(\lambda(W_+^{-2/5}-W_-^{-2/5})\big)
	\cdot\na\big(\lambda^{-1}(W_+^{7/5}-W_-^{7/5})\big)dx \\
	&\phantom{xx}{}+ \frac{5}{12}\int_{\R^3}\big(\lambda(W_+^{-2/5}-W_-^{-2/5})\big)
	\big(\lambda^{-1}(W_+^{7/5}-W_-^{7/5})\big)
	\bigg|\na\frac{\vec{W}}{|\vec{W}|}\bigg|^2 dx,
\end{align*}
where the mixed terms vanish since 
$\na(\vec{W}/|\vec{W}|)\cdot(\vec{W}/|\vec{W}|)=0$ (which is a
consequence of $\na|\vec{W}/|\vec{W}||^2=0$). 
Expanding the products in the first integral
on the right-hand side and in $I_2$, some terms cancel, and we end up with
\begin{align*}
  I_2+I_3 &= 
  \frac56\int_{\R^3}\big(\na(\lambda W_+^{-2/5})\cdot\na(\lambda^{-1}W_+^{7/5})
	+ \na(\lambda W_-^{-2/5})\cdot\na(\lambda^{-1}W_-^{7/5})\big)dx \\
  &\phantom{xx}{}+ \frac{5}{12}\int_{\R^3}(W_+^{-2/5}-W_-^{-2/5})
	(W_+^{7/5}-W_-^{7/5})\bigg|\na\frac{\vec{W}}{|\vec{W}|}\bigg|^2 dx \\
	&= \frac56\int_{\R^3}\sum_{s=\pm}\bigg(-\frac{14}{25}\frac{|\na W_s|^2}{W_s} 
	+ \frac95\frac{\na\lambda}{\lambda}\cdot\na W_s
	- \frac{W_s}{\lambda^2}|\na\lambda|^2\bigg)dx \\
	&\phantom{xx}{}+ \frac{5}{12}\int_{\R^3}(W_+^{-2/5}-W_-^{-2/5})
	(W_+^{7/5}-W_-^{7/5})\bigg|\na\frac{\vec{W}}{|\vec{W}|}\bigg|^2 dx.
\end{align*}
Finally, the fourth integral is nonpositive since $0\le W_-\le W_+$, i.e.\ $I_4\le 0$.
Combining these results, we find that
\begin{align}
  \frac{dH_2}{dt} &\le -\frac56\int_{\R^3}\sum_{s=\pm}\bigg(
	\frac{14}{25}\bigg|\frac{\na W_s}{\sqrt{W_s}}\bigg|^2
	- \frac45\frac{\na W_s}{\sqrt{W_s}}\cdot\frac{\sqrt{W_s}}{\lambda}\na\lambda
	+ \bigg|\frac{\sqrt{W_s}}{\lambda}\na\lambda\bigg|^2\bigg)dx \nonumber \\
	&\phantom{xx}{}- \frac{5}{12}\int_{\R^3}(W_-^{-2/5}-W_+^{-2/5})
	(W_+^{7/5}-W_-^{7/5})\bigg|\na\frac{\vec{W}}{|\vec{W}|}\bigg|^2 dx \nonumber \\
	&= J_1 + J_2. \label{41.J12} 
\end{align}

First, we consider the first integral $J_1$.
The quadratic form in $J_1$ is positive definite and the eigenvalues of the
associated matrix are larger than $1/5$, so
\begin{align}
  J_1 &\le -\frac15\int_{\R^3}\bigg(|\na\sqrt{W_+}|^2 + |\na\sqrt{W_-}|^2
	+ (W_+ + W_-)\bigg|\frac{\na\lambda}{\lambda}\bigg|^2\bigg)dx \nonumber \\
  &\le -\frac15\int_{\R^3}\bigg(|\na\sqrt{W_+}|^2 + |\na\sqrt{W_-}|^2
	+ 2\eps W_0\bigg|\frac{\na\lambda}{\lambda}\bigg|^2\bigg)dx, \label{41.J1}
\end{align}
where we replaced $W_+ + W_-$ by $2W_0$ and introduced some $\eps\in(0,1)$.
The last term can be reformulated in terms of $W_0=\frac12(W_+ + W_-)$ and 
$n_0=(W_+^{3/5}+W_-^{3/5})\lambda$, using the elementary inequalities
$(a-b)^2\ge \frac12 a^2-b^2$ and $-(a+b)^2\ge -2(a^2+b^2)$:
\begin{align*}
  W_0\bigg|\frac{\na\lambda}{\lambda}\bigg|^2
	&= W_0\bigg|\frac{\na n_0}{n_0} 
	- \frac35\frac{W_+^{-2/5}\na W_+}{W_+^{3/5}+W_-^{3/5}}
	- \frac35\frac{W_-^{-2/5}\na W_-}{W_+^{3/5}+W_-^{3/5}}\bigg|^2 \\
	&\ge \frac12 W_0\bigg|\frac{\na n_0}{n_0}\bigg|^2
	- \frac{18}{25}W_0\bigg(\frac{W_+^{-4/5}|\na W_+|^2}{(W_+^{3/5}+W_-^{3/5})^2}
	+ \frac{W_-^{-4/5}|\na W_-|^2}{(W_+^{3/5}+W_-^{3/5})^2}\bigg).
\end{align*}
Employing $W_+^{3/5}+W_-^{3/5}\ge (W_++W_-)^{3/5}=(2W_0)^{3/5}$ and 
$W_\pm^{1/5}\le (2W_0)^{1/5}$, we can estimate as follows:
\begin{align*}
  W_0\bigg|\frac{\na\lambda}{\lambda}\bigg|^2
	&\ge \frac12 W_0\bigg|\frac{\na n_0}{n_0}\bigg|^2
	- \frac{18}{25}W_0\bigg(\frac{4W_+^{1/5}|\na \sqrt{W_+}|^2}{(2W_0)^{6/5}}
	+ \frac{4W_-^{1/5}|\na W_-|^2}{(2W_0)^{6/5}}\bigg) \\
	&= \frac12 W_0\bigg|\frac{\na n_0}{n_0}\bigg|^2
	- \frac{18}{25}\frac{2^{4/5}}{W_0^{1/5}}\big(W_+^{1/5}|\na\sqrt{W_+}|^2
	+ W_-^{1/5}|\na\sqrt{W_-}|^2\big) \\
	&\ge \frac12 W_0\bigg|\frac{\na n_0}{n_0}\bigg|^2
	- \frac{36}{25}\big(|\na\sqrt{W_+}|^2	+ |\na\sqrt{W_-}|^2\big).
\end{align*}
Then, with the relation $W_0=\frac32 n_0T$,
$$
  W_0\bigg|\frac{\na\lambda}{\lambda}\bigg|^2
	\ge 3T|\na\sqrt{n_0}|^2
	- \frac{36}{25}\big(|\na\sqrt{W_+}|^2	+ |\na\sqrt{W_-}|^2\big).
$$
Inserting this expression into \eqref{41.J1} and choosing $\eps>0$
sufficiently small, we arrive at
\begin{equation}\label{41.J11}
  J_1 \le -c\int_{\R^3}\big(|\na\sqrt{W_+}|^2 + |\na\sqrt{W_-}|^2
	+ T|\na\sqrt{n_0}|^2\big)dx
\end{equation}
for some number $0<c<1/5$. 

Next, we estimate the second integral $J_2$ in \eqref{41.J12}. 
By the mean-value theorem, there exist $\xi$, $\eta\in[W_-,W_+]$ such that
\begin{align*}
  (W_-^{-2/5} & - W_+^{-2/5})(W_+^{7/5}-W_-^{7/5})
	= (W_+W_-)^{-2/5}(W_+^{2/5}-W_-^{2/5})(W_+^{7/5}-W_-^{7/5}) \\
	&= \frac{14}{25}(W_+W_-)^{-2/5}\xi^{-3/5}\eta^{2/5}(W_+ - W_-)^2 
	\ge \frac{14}{25}W_+^{-1}(W_+ - W_-)^2 
	\ge \frac{28}{25}\frac{|\vec{W}|^2}{W_0},
\end{align*}
where we used that $W_+\le 2W_0$ and $W_+ - W_-=2|\vec{W}|$. Consequently,
$$
  J_2 \ge \frac{28}{25}\int_{\R^3}\frac{|\vec{W}|^2}{W_0}
	\bigg|\na\frac{\vec{W}}{|\vec{W}|}\bigg|^2 dx
	= \frac{28}{25}\int_{\R^3}\frac{1}{W_0}
	\bigg|\bigg({\mathbb I}-\frac{\vec{W}\otimes\vec{W}}{|\vec{W}|^2}
	\bigg)\na\vec{W}\bigg|^2 dx.
$$
Combining this inequality and \eqref{41.J11} with \eqref{41.J12},
the result follows.
\end{proof}


\subsection{Entropy inequality for the third model}\label{sec.ent3}

We show that there exists an entropy for the third model 
\eqref{1.et3.npm}-\eqref{1.et3.s} ($\vec{a}=\lambda\vec{c}$) for vanishing
electric fields, i.e.
\begin{align}
  & \pa_t n_\pm - \Delta(n_\pm T_\pm) = \mp\frac{1}{2\tau_{\rm sf}}(n_+ - n_-)
	\mp \frac12(n_+T_+ - n_-T_-)|\na\vec{s}|^2, \label{43.eq1} \\
	& \frac32\pa_t(n_\pm T_\pm) - \frac52\Delta(n_\pm T_\pm^2)
	= \mp\frac{3}{4\tau_{\rm sf}}(n_+T_+ - n_-T_-)
	\mp \frac54(n_+T_+^2 - n_-T_-^2)|\na\vec{s}|^2, \label{43.eq2} \\
  & \pa_t\vec{s} - \frac{n_+T_+ - n_-T_-}{n_+ - n_-}\vec{s}\times
	(\Delta\vec{s}\times\vec{s}) = 2\frac{\na(n_+T_+ - n_-T_-)}{n_+ - n_-}
	\cdot\na\vec{s} - \vec{\Omega}_{\rm e}\times\vec{s}, \label{43.eq3}
\end{align}
where $x\in\R^3$, $t>0$. As in Section \ref{sec.ent2}, we make first explicit
the entropy functional \eqref{1.H}, where the Maxwellian is given by its
Pauli components \eqref{3.max3}. A computation shows that
$M_\pm=n_\pm g_{T_\pm}(k)\sigma_0$, so \eqref{4.H} yields immediately \eqref{1.H3}.

\begin{proposition}[Entropy inequality for system \eqref{43.eq1}-\eqref{43.eq3}]
\label{prop.ent3}
The entropy \eqref{1.H3}, considered as a function of time, is nonincreasing 
along (smooth) solutions $(n_\pm,T_\pm,\vec{s})$ to \eqref{43.eq1}-\eqref{43.eq3}
in $\R^3$, and there exists a number $c>0$ such that
$$
  \frac{dH_3}{dt} + c\int_{\R^3}\sum_{s=\pm}\big(T_s|\na\sqrt{n_s}|^2
	+ n_s|\na\sqrt{T_s}|^2\big)dx \le 0.
$$
\end{proposition}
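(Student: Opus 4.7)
The strategy mirrors the proof of Proposition~\ref{prop.ent1}, the main novelty being that there are now two temperatures $T_+$ and $T_-$. The plan is to differentiate $H_3$ in time, identify the appropriate test functions, integrate by parts on $\R^3$ to extract the diffusive dissipation, and then control the remaining source contributions via a kinetic (Fisher-information-type) representation.

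First, writing $\mu_s := \log(n_sT_s^{-3/2})$ and $h(n,T) := n\log(nT^{-3/2})$, the identities $\pa h/\pa n_s = \mu_s + 1$ and $\pa h/\pa T_s = -3n_s/(2T_s)$, together with the elimination $n_s\pa_t T_s = \pa_t(n_sT_s) - T_s\pa_t n_s$, yield
\begin{equation*}
\frac{dH_3}{dt} = \int_{\R^3}\sum_{s=\pm}\left[\left(\mu_s+\tfrac52\right)\pa_t n_s - \frac{3}{2T_s}\pa_t(n_sT_s)\right]dx.
\end{equation*}
Substituting the equations \eqref{43.eq1}--\eqref{43.eq2} and integrating the Laplacian terms by parts on $\R^3$ splits $dH_3/dt$ into a diffusion part and a source part. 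The diffusion part equals $-\int_{\R^3}\sum_{s=\pm}\bigl[(T_s/n_s)|\na n_s|^2 + 2\na n_s\cdot\na T_s + (7n_s/(2T_s))|\na T_s|^2\bigr]dx$; introducing $X_s := \sqrt{T_s/n_s}\,\na n_s$ and $Y_s := \sqrt{n_s/T_s}\,\na T_s$, the integrand becomes the positive-definite quadratic form $|X_s|^2 + 2\,X_s\cdot Y_s + \tfrac72|Y_s|^2$ whose matrix $\begin{pmatrix}1 & 1 \\ 1 & 7/2\end{pmatrix}$ has smallest eigenvalue $(9-\sqrt{41})/4>0$. Hence the diffusion part dominates $c\sum_{s=\pm}\bigl(T_s|\na\sqrt{n_s}|^2 + n_s|\na\sqrt{T_s}|^2\bigr)$ for some $c>0$.

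Since the sources in \eqref{43.eq1}--\eqref{43.eq2} satisfy $R_-^n = -R_+^n$ and $R_-^W = -R_+^W$, the $s$-sum of the remaining contributions collapses to
\begin{equation*}
\int_{\R^3}\left[(\mu_+-\mu_-)R_+^n + R_+^W\,\frac{T_+-T_-}{T_+T_-}\right]dx = -\int_{\R^3}\left[\frac{\mathcal{G}}{\tau_{\rm sf}} + \tilde{\mathcal{G}}|\na\vec s|^2\right]dx,
\end{equation*}
where
\begin{align*}
\mathcal{G} &:= \tfrac12(\mu_+-\mu_-)(n_+-n_-) + \frac{3(T_+-T_-)(n_+T_+-n_-T_-)}{4T_+T_-}, \\
\tilde{\mathcal{G}} &:= \tfrac12(\mu_+-\mu_-)(n_+T_+-n_-T_-) + \frac{5(T_+-T_-)(n_+T_+^2-n_-T_-^2)}{4T_+T_-}.
\end{align*}

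The main technical obstacle is to verify that $\mathcal{G}\ge 0$ and $\tilde{\mathcal{G}}\ge 0$ pointwise in $x$. My plan is to show that both quantities admit kinetic moment representations against the pointwise nonnegative integrand $(f_+-f_-)\log(f_+/f_-)$, where $f_s(k) = n_s(2\pi T_s)^{-3/2}e^{-|k|^2/(2T_s)}$ are the spin-up/down Gaussians underlying the Maxwellian \eqref{3.max3}. Using $\log(f_+/f_-) = (\mu_+-\mu_-) + |k|^2(T_+-T_-)/(2T_+T_-)$, together with the Gaussian moments \eqref{3.int.gT} and $\int_{\R^3}f_s|k|^4dk = 15\,n_sT_s^2$, one computes
\begin{equation*}
2\mathcal{G} = \int_{\R^3}(f_+-f_-)\log\frac{f_+}{f_-}\,dk, \qquad 3\tilde{\mathcal{G}} = \int_{\R^3}(f_+-f_-)\log\frac{f_+}{f_-}\,\frac{|k|^2}{2}\,dk.
\end{equation*}
Since $(a-b)\log(a/b)\ge 0$ for all $a,b>0$, both integrands are pointwise nonnegative, hence $\mathcal{G}, \tilde{\mathcal{G}}\ge 0$. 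Combining this with the diffusion bound from Step~2 yields the claimed inequality.
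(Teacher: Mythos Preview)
Your proof is correct. The treatment of the diffusion part coincides with the paper's: both reduce to the positive-definite quadratic form with matrix $\begin{pmatrix}1 & 1\\ 1 & 7/2\end{pmatrix}$ in the variables $\sqrt{T_s}\,\na\sqrt{n_s}$ and $\sqrt{n_s}\,\na\sqrt{T_s}$ (your $X_s,Y_s$ are just twice these).

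Where you genuinely diverge from the paper is in handling the source contributions $\mathcal{G}$ and $\tilde{\mathcal{G}}$. The paper argues elementarily: it splits off the manifestly nonnegative piece $(n_+-n_-)(\log n_+-\log n_-)$, then uses the ordering $T_+\ge T_-$ (inherited from $\theta_+\ge\theta_-$ in the Maxwellian construction) together with $n_+T_+-n_-T_-\ge(n_+-n_-)T_+$ to reduce the remainder to an expression of the form $(n_+-n_-)\bigl(T_+/T_- - 1 - \log(T_+/T_-)\bigr)\ge 0$, and similarly for $\tilde{\mathcal{G}}$. Your kinetic representation
\[
2\mathcal{G}=\int_{\R^3}(f_+-f_-)\log\frac{f_+}{f_-}\,dk,\qquad
3\tilde{\mathcal{G}}=\int_{\R^3}(f_+-f_-)\log\frac{f_+}{f_-}\,\frac{|k|^2}{2}\,dk,
\]
with $f_s=n_sg_{T_s}$, is more conceptual: it recognizes these quantities as moments of the relative-entropy density $(f_+-f_-)\log(f_+/f_-)\ge 0$, so nonnegativity is immediate without invoking the sign of $T_+-T_-$. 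This ties the macroscopic entropy production directly back to the kinetic entropy \eqref{4.H}, which is a nice structural insight. The paper's route, by contrast, stays purely at the fluid level and is perhaps more self-contained for a reader who has not internalized the Maxwellian \eqref{3.max3}.
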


\begin{proof}
Before computing the derivative $dH_3/dt$, let us consider the semiclassical
energy-transport system
$$
  \pa_t n = \Delta(nT), \quad \frac32\pa_t(nT) = \frac52\Delta(nT^2)
	\quad\mbox{in }\R^3,
$$
which is known to dissipate the entropy $H_0=\int_{\R^3}n\log(nT^{-3/2})dx$.
Indeed, a computation shows that
\begin{align*}
  \frac{dH_0}{dt} &= -\int_{\R^3}\bigg(\na(nT)\cdot\na(nT^{-3/2})
	- \frac52\na\frac{1}{T}\cdot\na(nT^2)\bigg)dx \\
	&= -4\int_{\R^3}\bigg(|\sqrt{T}\na\sqrt{n}|^2 + 2\sqrt{T}\na\sqrt{n}\cdot
	\sqrt{n}\na\sqrt{T} + \frac72|\sqrt{n}\sqrt{T}|^2\bigg)dx.
\end{align*}
The quadratic form in the variables $\sqrt{T}\na\sqrt{n}$ and 
$\sqrt{n}\na\sqrt{T}$ is positive definite and the eigenvalues of the associated
matrix are larger than $1/2$, so
$$
  \frac{dH_0}{dt} + 2\int_{\R^3}\big(|\sqrt{T}\na\sqrt{n}|^2
	+ |\sqrt{n}\na\sqrt{T}|^2\big)dx \le 0.
$$

The similarity in structure between $H_3$ and $H_0$ as well as between the
spin and semiclassical energy-transport system allows us to deduce that, for some
number $c>0$,
\begin{align*}
  \frac{dH_3}{dt} & + c\int_{\R^3}\sum_{s=\pm}\big(T_s|\na\sqrt{n_s}|^2
	+ n_s|\na T_s|^2\big)dx \\
	&\le - \frac{1}{2\tau_{\rm sf}}
	\int_{\R^3}\bigg(\log\frac{n_+T_+^{-3/2}}{n_-T_-^{-3/2}}(n_+ - n_-)
	+ \frac32\frac{T_+ - T_-}{T_+T_-}(n_+T_+ - n_-T_-)\bigg)dx \\
	&\phantom{xx}{}- \frac12\int_{\R^3}\bigg(\log\frac{n_+T_+^{-3/2}}{n_-T_-^{-3/2}}
	(n_+T_+ - n_-T_-)
	+ \frac52\frac{T_+ - T_-}{T_+T_-}(n_+T_+^2 - n_-T_-^2)\bigg)|\vec{s}|^2dx \\
  &= I_1 + I_2. 
\end{align*}
We claim that $I_1\le 0$ and $I_2\le 0$ which concludes the proof.

First, we prove that $I_1\le 0$. It holds that
\begin{align*}
  I_1 &= -\frac{1}{2\tau_{\rm sf}}\int_{\R^3}(n_+ - n_-)(\log n_+ - \log n_-)dx \\
	&\phantom{xx}{}- \frac{3}{4\tau_{\rm sf}}
	\int_{\R^3}\bigg(-(n_+ - n_-)\log\frac{T_+}{T_-}
	+ \frac{T_+ - T_-}{T_+T_-}(n_+T_+ - n_-T_-)\bigg)dx \\
	&\le -\frac{3}{4\tau_{\rm sf}}\int_{\R^3}\bigg(-(n_+ - n_-)\log\frac{T_+}{T_-}
	+ \frac{T_+ - T_-}{T_+T_-}(n_+T_+ - n_-T_-)\bigg)dx.
\end{align*}
Because of $T_-\le T_+$ and $n_-\ge 0$, we have $n_+T_+ - n_-T_-\ge (n_+ - n_-)T_+$
which shows that
\begin{align*}
  I_1 &\-e -\frac{3}{4\tau_{\rm sf}}\int_{\R^3}\bigg(-(n_+ - n_-)\log\frac{T_+}{T_-}
	+ \frac{1}{T_-}(n_+ - n_-)(T_+ - T_-)\bigg)dx \\
	&= -\frac{3}{4\tau_{\rm sf}}
	\int_{\R^3}(n_+ - n_-)\bigg(\frac{T_+}{T_-} - 1 - \log\frac{T_+}{T_-}
	\bigg)dx \le 0.
\end{align*}
In a similar way as above, we find that $n_+T_+^2 - n_-T_-^2\ge (n_+T_+ - n_-T_-)T_+$
and
\begin{align*}
  I_2 &= -\frac12\int_{\R^3}(n_+T_+ - n_-T_-)(\log(n_+T_+)-\log(n_-T_-))dx \\
	&\phantom{xx}{}- \frac54\int_{\R^3}\bigg(-(n_+T_+ - n_-T_-)\log\frac{T_+}{T_-}
	+ \frac{T_+ - T_-}{T_+T_-}(n_+T_+^2 - n_-T_-^2)\bigg)|\vec{s}|^2dx \\
  &\le -\frac54\int_{\R^3}\bigg(-(n_+T_+ - n_-T_-)\log\frac{T_+}{T_-}
	+ \frac{1}{T_-}(n_+T_+ - n_-T_-)(T_+ - T_-)\bigg)|\vec{s}|^2dx \\
	&= -\frac54\int_{\R^3}(n_+T_+ - n_-T_-)
	\bigg(\frac{T_+}{T_-} - 1 - \log\frac{T_+}{T_-}\bigg)dx \le 0.
\end{align*}
This finishes the proof.
\end{proof}


\section{Existence analysis of the second model}\label{sec.ex}

We show the existence of weak solutions to a time-discrete version
of the second model in the formulation \eqref{3.et2.n02}-\eqref{3.et2.vecw2} 
for vanishing electric field. Replacing $W_0=\frac32 n_0T$ and $Z_0$, $\vec{Z}$ by
\eqref{32.Z0}, \eqref{32.vecZ}, respectively, we obtain system
\eqref{12.n0}-\eqref{12.vecw}. We recall that $h>0$ is the time step 
size, $(n_0,W_0,\vec{W})$ are the unknowns, and $(n_0^0,W_0^0,\vec{W}^0)$ 
are the moments at the previous time step (supposed to be given). 

\begin{theorem}[Existence for the time-discrete second model]\label{thm.ex}
Let $\dom\subset\R^d$ ($d\le 3$) be a bounded domain and let $n_0^0$, 
$W_0^0\in L^2(\dom)$, 
$n_0^D$, $W_0^D\in H^1(\dom)\cap L^\infty(\dom)$, 
$\vec{W}^D\in H^1(\dom;\R^3)\cap L^\infty(\dom;\R^3)$, 
$\vec{W}^0\in L^2(\dom;\R^3)$
satisfy $\sup_{\pa \dom}|\vec{W}^D|/W_0^D<1$ and
$$
  n_0^0>0, \ W_0^0>0\quad\mbox{in }\dom, \quad \inf_{D}\frac{W_0^0}{n_0^0}>0,
	\quad \sup_{\dom}\frac{|\vec{W}^0|}{W_0^0} < 1.
$$
Then there exists a solution $(n_0,W_0,\vec{W})\in H^1(\dom;\R^5)$ to
\eqref{12.n0}-\eqref{12.bc} such that 
$$
  n_0>0, \ W_0>0\quad\mbox{in }\dom, \quad \inf_{D}\frac{W_0}{n_0}>0,
	\quad \sup_{\dom}\frac{|\vec{W}|}{W_0} < 1.
$$
\end{theorem}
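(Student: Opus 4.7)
My proof plan follows the strategy outlined in Section \ref{sec.main}, inspired by the analysis in \cite{ZaJu15}. In a first step I rewrite the system \eqref{12.n0}--\eqref{12.vecw} in diagonal form by introducing auxiliary variables $(u,v_0,\vec v)$ essentially proportional to $W_0$, $Z_0$, and $\vec Z$, so that the system becomes
\begin{align*}
 n_0(u,v_0,\vec v)-n_0^0 &= h\Delta u, \\
 W_0(u,v_0,\vec v)-W_0^0 &= h\Delta v_0, \\
 \vec W(u,v_0,\vec v)-\vec W^0 &= h\Delta \vec v-(h/\tau_{\rm sf})\vec W.
\end{align*}
On the physical set $\{n_0>0,\ W_0>0,\ |\vec W|<W_0\}$ the map $(u,v_0,\vec v)\mapsto(n_0,W_0,\vec W)$ is a smooth diffeomorphism, and the Dirichlet data for $(u,v_0,\vec v)$ are transported from those prescribed for $(n_0,W_0,\vec W)$.

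The diffusion coefficients degenerate when $n_0\to 0$ or when $|\vec W|/W_0\to 1$, so I truncate the nonlinearities: an operator $[\cdot]_\eps$ with $[s]_\eps=s$ for $s\in(0,1/\eps]$ is applied to the dangerous ratios, e.g.\ $u$ is replaced by $[u/v_0]_\eps v_0$ inside the nonlinear argument of $n_0(\cdot)$, and an analogous cut-off is imposed on the spin ratio $|\vec W|/W_0$. On the physical regime the cut-offs act as the identity. Existence of a weak solution to the truncated problem is then proved by the Leray--Schauder fixed-point theorem applied to the map that sends $(\bar u,\bar v_0,\bar{\vec v})\in L^\infty(\dom)^5$ to the unique weak $H^1$-solution $(u,v_0,\vec v)$ of the three decoupled linear Poisson problems obtained by freezing the nonlinear right-hand sides. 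Continuity and compactness follow from Lax--Milgram together with the compact embedding $H^1(\dom)\hookrightarrow L^2(\dom)$, and the homotopy bound is a routine $H^1$ energy estimate.

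To remove the truncation, I derive pointwise bounds by testing the fixed-point equations against Stampacchia-type functions such as $(n_0-\delta)^-$, $(W_0/n_0-\delta)^-$, and $(|\vec W|^2-(1-\delta)W_0^2)^+$. These yield
\[
 n_0>0,\qquad W_0>0,\qquad W_0/n_0\ge c>0,\qquad |\vec W|/W_0<1 \quad\text{in }\dom.
\]
The spin-flip term $-\vec W/\tau_{\rm sf}$ is essential for the last inequality, since it provides an $O(h)$ damping that, coupled with the boundary hypothesis $\sup_{\partial\dom}|\vec W^D|/W_0^D<1$, prevents $|\vec W|/W_0$ from touching $1$ inside $\dom$. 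On the set where these bounds hold the truncation is inactive, so the truncated solution solves the original system \eqref{12.n0}--\eqref{12.vecw}.

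The hard part is deriving these four pointwise bounds uniformly in the truncation parameter $\eps$. The entropy estimate \eqref{1.dhdt2} controls $\nabla\sqrt{n_0}$ only through the weight $T\,|\nabla\sqrt{n_0}|^2$ and is therefore useless when $T$, equivalently $n_-=W_0-|\vec W|$, approaches zero---precisely the regime in which the diffusion matrix of \eqref{12.vecw} degenerates. Closing $|\vec W|/W_0<1$ requires exploiting the algebraic structure of the nonlinear fluxes $Z_0$, $\vec Z$ (in particular the monotonicity of $W_+^{7/5}\pm W_-^{7/5}$ in $|\vec W|$) together with the sign of the spin-flip relaxation, and this is precisely where the step size $h$ enters the a priori bounds and obstructs a subsequent passage $h\to 0$.
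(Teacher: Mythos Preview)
Your overall architecture matches the paper's: pass to auxiliary variables $(u,v_0,\vec v)$ so that the differential operators become $\Delta u$, $\Delta v_0$, $\Delta\vec v$; truncate; apply Leray--Schauder; then remove the truncation by Stampacchia-type tests. Two points in your proposal, however, would not go through as written.

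\textbf{The test functions must live in the $(u,v_0,\vec v)$ variables, not in $(n_0,W_0,\vec W)$.} Testing, say, the equation $n_0-n_0^0=h\Delta u$ with $(n_0-\delta)^-$ produces $\int\nabla u\cdot\nabla n_0$ on $\{n_0<\delta\}$, and since $n_0=5u^2(v_+^{3/7}+v_-^{3/7})/(v_+^{5/7}+v_-^{5/7})^2$ depends on all of $u,v_0,\vec v$, this cross-term has no sign. The same obstruction kills $(|\vec W|^2-(1-\delta)W_0^2)^+$ as a test for the $\vec v$-equation. The paper instead obtains positivity of $u,v_0$ by standard Stampacchia on each linear equation separately, and then gets the ratio bound $u/v_0\le 1/\eps$ by a \emph{cross-testing} trick: test the $u$-equation with $v_0\phi(u/v_0)$ and the $v_0$-equation with $u\phi(u/v_0)$, $\phi(z)=(z-1/\eps)^+$, and subtract. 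The diffusive contributions then combine to
\[
\int_\dom v_0^2\,|\nabla(u/v_0)|^2\,\phi'(u/v_0)\,dx\ge 0,
\]
and the lower-order terms are controlled using $\lambda\ge\frac52\xi>\frac32\xi$ (your bound \eqref{42.lamu}) together with $\eps\le\inf(W_0^0/n_0^0)$. The bounds on $(n_0,W_0,\vec W)$ are then read off from the explicit formulas. For the spin constraint the paper tests with the linear cut-off $w=\min\{0,(1-\delta)v_0-|\vec v|\}$ and $\vec w=w\,\vec v/|\vec v|$, again in the $v$-variables.

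\textbf{The spin-flip term is not what closes the spin bound.} You attribute the strict inequality $|\vec W|/W_0<1$ to the $O(h)$ damping from $-\vec W/\tau_{\rm sf}$. In fact the paper's Step~5 works even with $h/\tau_{\rm sf}=0$: on the set $\{|\vec v|/v_0\ge 1-\delta\}$ one has
\[
\frac{v_+^{5/7}-v_-^{5/7}}{v_+^{5/7}+v_-^{5/7}}\ge 1-2\delta^{5/7}>\tfrac23,
\]
so the prefactor $\frac32$ (coming from $W_0=\frac32 u$) already forces $(1-\delta)-\frac32\cdot(\text{ratio})\le -c_\delta<0$ for $\delta<(1/6)^{7/5}$. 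The spin-flip factor $(1+h/\tau_{\rm sf})$ only strengthens this. Thus the mechanism is the $5/7$-nonlinearity together with the coefficient $3/2$, not the relaxation; this also means no separate $[\cdot]_\eps$ cut-off on the spin ratio is needed---only the one-sided truncation $v_\pm=\max\{0,v_0\pm|\vec v|\}$.
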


\begin{proof}
The proof is inspired by the techniques employed in \cite{ZaJu15}.
The idea is to introduce new variables to make the differential operator 
linear and to truncate the nonlinearities. We proceed in several steps.

{\em Step 1: new variables.} Let $W_\pm=W_0\pm|\vec{W}|$. We define 
\begin{align*}
  & u := \frac23 W_0, \quad 
	v_0 := \frac{5}{18n_0}(W_+^{3/5}+W_-^{3/5})(W_+^{7/5}+W_-^{7/5}), \\
	& \vec{v} := \frac{5}{18n_0}(W_+^{3/5}+W_-^{3/5})(W_+^{7/5}-W_-^{7/5})
  \frac{\vec{W}}{|\vec{W}|}.
\end{align*}
Observe that $\sup_{\pa \dom}|\vec{W}^D|/W_0^D<1$ implies that
$\inf_{\pa \dom}W_\pm>0$ and $\sup_{\pa \dom}|\vec{v}|/v_0<1$.
Furthermore,
\begin{align*}
  & |\vec{v}| = \frac{5}{18n_0}(W_+^{3/5}+W_-^{3/5})(W_+^{7/5}-W_-^{7/5}), \quad
	\frac{\vec{v}}{|\vec{v}|} = \frac{\vec{W}}{|\vec{W}|}, \\
  & v_\pm := v_0\pm|\vec{v}| = \frac{5}{9n_0}(W_+^{3/5}+W_-^{3/5})W_\pm^{7/5}.
\end{align*}
This shows that $v_+/v_- = (W_+/W_-)^{7/5}$ or equivalently,
$W_+/W_-=(v_+/v_-)^{5/7}$. We rewrite the variables in terms of $v_\pm$,
observing that $v_+ + v_- = 2v_0$:
\begin{align}
  v_0 &= \frac{5}{18}\frac{W_-^2}{n_0}\bigg(1+\bigg(\frac{v_+}{v_-}\bigg)^{3/7}
	\bigg)\bigg(1+\frac{v_+}{v_-}\bigg)
	= \frac{5}{18}\frac{W_+^2}{n_0}\bigg(1+\bigg(\frac{v_-}{v_+}\bigg)^{3/7}
	\bigg)\bigg(1+\frac{v_-}{v_+}\bigg), \nonumber \\
	W_\pm &= \bigg(\frac{18}{5}n_0v_0\bigg)^{1/2}v_\pm^{5/7}
	(v_+^{3/7}+v_-^{3/7})^{-1/2}(v_++v_-)^{-1/2} \label{42.Wpm}\\
	&= \bigg(\frac95 n_0\bigg)^{1/2}v_\pm^{5/7}(v_+^{3/7}+v_-^{3/7})^{-1/2}
	\nonumber \\
	u &= \frac13(W_++W_-) = \bigg(\frac25n_0v_0\bigg)^{1/2}
	(v_+^{5/7}+v_-^{5/7})(v_+^{3/7}+v_-^{3/7})^{-1/2}(v_+ + v_-)^{1/2} \nonumber \\
	&= \bigg(\frac{n_0}{5}\bigg)^{1/2}
	(v_+^{5/7}+v_-^{5/7})(v_+^{3/7}+v_-^{3/7})^{-1/2}. \nonumber
\end{align}
Solving the last expression for $n_0$ yields
\begin{equation}\label{42.fct.n0}
  n_0 = 5u^2\frac{v_+^{3/7}+v_-^{3/7}}{(v_+^{5/7}+v_-^{5/7})^2},
\end{equation}
and inserting this equation into \eqref{42.Wpm} gives
$W_\pm = 3uv_\pm^{5/7}/(v_+^{5/7}+v_-^{5/7})$.
Because of $\vec{v}/|\vec{v}|=\vec{W}/|\vec{W}|$, it follows that
\begin{equation}\label{42.fct.w0}
  W_0 = \frac12(W_+ + W_-) = \frac32 u, \quad
	\vec{W} = \frac32u\frac{v_+^{5/7}-v_-^{5/7}}{v_+^{5/7}+v_-^{5/7}}
	\frac{\vec{v}}{|\vec{v}|}.
\end{equation}
We infer that system \eqref{12.n0}-\eqref{12.vecw} can be written as
\begin{align}
  n_0(u,v_0,\vec{v}) - h\Delta u &= n_0^0, \label{42.n02} \\
	W_0(u,v_0,\vec{v}) - h\Delta v_0 &= W_0^0, \label{42.w02} \\
	\bigg(1+\frac{h}{\tau_{\rm sf}}\bigg)\vec{W}(u,v_0,\vec{v})
	- h\Delta\vec{v} &= \vec{W}^0\quad\mbox{in }\dom,
\end{align}
where $n_0(u,v_0,\vec{v})$, $W_0(u,v_0,\vec{v})$, and 
$\vec{W}(u,v_0,\vec{v})$ are given by \eqref{42.fct.n0}-\eqref{42.fct.w0}.

{\em Step 2: truncation.} We introduce for $\eps>0$ the truncation operator
$$
  [f]_\eps := \left\{\begin{array}{ll}
	0 &\quad\mbox{for }f\le 0, \\
	f &\quad\mbox{for }0<f\le 1/\eps, \\
	1/\eps &\quad\mbox{for }f>1/\eps,
	\end{array}\right.
$$
and the auxiliary functions
\begin{align*}
  \lambda(\xi,v_+,v_-) 
	&:= \frac52\xi\frac{(v_+^{3/7}+v_-^{3/7})(v_+ + v_-)}{(v_+^{5/7}+v_-^{5/7})^2}, \\
	\mu(\xi,v_+,v_-) 
	&:= \frac32\bigg(1+\frac{h}{\tau_{\rm sf}}\bigg)\xi
	\frac{v_+^{5/7}-v_-^{5/7}}{v_+^{5/7}+v_-^{5/7}}.
\end{align*}
These definitions imply that
$$
  n_0(u,v_0,\vec{v}) = \lambda(u/v_0,v_+,v_-)u, \quad
	\vec{W}(u,v_0,\vec{v}) = \mu(u/v_0,v_+,v_-)v_0\frac{\vec{v}}{|\vec{v}|}.
$$

We claim that the following estimate holds for $\lambda$ and $\mu$:
\begin{equation}\label{42.lamu}
  \frac52\xi\le\lambda(\xi,v_+,v_-) \le 6\xi, \quad
	0\le\mu(\xi,v_+,v_-) \le\frac32\bigg(1+\frac{h}{\tau_{\rm sf}}\bigg)\xi
\end{equation}
for all $\xi\ge 0$, $v_+\ge v_-\ge 0$.
Indeed, the bounds for $\mu$ are obvious. In order to prove the upper bound for
$\lambda$, we observe that
$$
  (v_+^{3/7}+v_-^{3/7})(v_+ + v_-)
	= v_+^{10/7} + v_-^{10/7} + v_+^{3/7}v_- + v_-^{3/7}v_+.
$$
By Young's inequality,
$$
  v_+^{3/7}v_- \le \frac{3}{10}v_+^{10/7} + \frac{7}{10}v_-^{10/7}
	\le \frac{7}{10}\big(v_+^{10/7} + v_-^{10/7}\big),
$$
and the same bound holds for $v_-^{3/7}v_+$ such that
$$
  (v_+^{3/7}+v_-^{3/7})(v_+ + v_-) \le \frac{12}{5}\big(v_+^{10/7} + v_-^{10/7}\big)
	\le \frac{12}{5}\big(v_+^{5/7} + v_-^{5/7}\big)^2.
$$
Inserting this estimate into the definition of $\lambda$, the upper bound
follows. The lower bound is equivalent to
$(v_+^{3/7}+v_-^{3/7})(v_+ + v_-)\ge (v_+^{5/7}+v_-^{5/7})^2$ which follows from
\begin{align*}
  \big(v_+^{5/7}&+v_-^{5/7}\big)^2 - \big(v_+^{3/7}+v_-^{3/7}\big)(v_+ + v_-)
	= 2v_+^{5/7}v_-^{5/7} - v_+^{3/7}v_- - v_-^{3/7}v_+ \\
	&= 2(v_+v_-)^{5/7}\bigg(1 - \frac12\bigg(\frac{v_-}{v_+}\bigg)^{2/7}
	- \frac12\bigg(\frac{v_+}{v_-}\bigg)^{2/7}\bigg) \le 0.
\end{align*}
This completes the proof of \eqref{42.lamu}.

With the above truncation, we wish to prove the existence of a weak solution
to
\begin{align}
  \lambda\big([u/v_0]_\eps,v_+,v_-\big)u - h\Delta u &= n_0^0, \label{42.eps1} \\
	\frac32[u/v_0]_\eps v_0 - h\Delta v_0 &= W_0^0, \label{42.eps2} \\
	\mu\big([u/v_0]_\eps,v_+,v_-\big)v_0\frac{\vec{v}}{|\vec{v}|} - h\Delta\vec{v}
	&= \vec{W}^0\quad\mbox{in }\dom, \label{42.eps3}
\end{align}
where, slightly abusing the notation, $v_\pm$ is here defined by
$v_\pm=\max\{0,v_0\pm|\vec{v}|\}$.
Since we will prove below that $v_0\pm|\vec{v}|\ge 0$, this notation is consistent.
The boundary conditions are
\begin{equation}\label{42.bc}
  u = u^D:=\frac23 W_0^D, \quad v_0 = v_0^D, \quad \vec{v}=\vec{v}^D
	\quad \mbox{on }\pa \dom,
\end{equation}
where
\begin{align*}
  v_0^D &:= \frac{5}{18n_0^D}((W_+^D)^{3/5}+(W_-^D)^{3/5})
	((W_+^D)^{7/5}+(W_-^D)^{7/5}), \\
	\vec{v}^D &:= \frac{5}{18n_0^D}((W_+^D)^{3/5}+(W_-^D)^{3/5})
	((W_+^D)^{7/5}-(W_-^D)^{7/5})\frac{\vec{W}^D}{|\vec{W}^D|},
\end{align*}
and $W_\pm^D:=W_0^D\pm|\vec{W}^D|$. 

{\em Step 3: existence of solutions to the truncated problem.}
The existence of a solution to \eqref{42.eps1}-\eqref{42.eps3} is shown using
the Leray-Schauder fixed-point theorem. For this, we define the mapping
$F:L^2(\dom;\R^5)\times[0,1]\to L^2(\dom;\R^5)$, $F(\rho,\nu_0,\vec{\nu};\sigma)
= (u/v_0,v_0,\vec{v})$, auch that
\begin{align}
  \sigma\lambda\big([\rho]_\eps,\nu_+,\nu_-\big)u - h\Delta u &= n_0^0
	\quad\mbox{in }\dom, \quad u=u^D\quad\mbox{on }\pa \dom, \label{42.lin1} \\
	\frac32\sigma[\rho]_\eps v_0 - h\Delta v_0 &= W_0^0
	\quad\mbox{in }\dom, \quad v_0=v_0^D\quad\mbox{on }\pa \dom, \label{42.lin2} \\
	\sigma\mu\big([\rho]_\eps,\nu_+,\nu_-\big)\nu_0
	\frac{\vec{\nu}}{|\vec{\nu}|} - h\Delta \vec{v} &= \vec{W}^0
	\quad\mbox{in }\dom, \quad \vec{v}=\vec{v}^D\quad\mbox{on }\pa \dom, 
	\label{42.lin3}
\end{align}
where $\nu_\pm := \max\{0,\nu_0\pm|\vec{\nu}|\}$. We first show that
$F$ is well defined, i.e.\ $u/v_0\in L^2(\dom)$. Standard eliptic regularity
implies that $u$, $v_0\in H^2(D)\subset L^\infty(\dom)$ (here we use $d\le 3$).
By Stampacchia's truncation technique, we infer that $u$ and $v_0$ are
strictly positive (see, e.g., Step 2 in \cite[Section~2]{ZaJu15}). We deduce
that $u/v_0\in H^1(\dom)$, and $F$ is well defined. Since $\vec{v}\in H^2(\dom;\R^3)$
by elliptic regularity again, the range of $F$ lies in $H^1(\dom;\R^5)$.
Employing $u$, $v_0$, $\vec{v}$, respectively, as test functions in the weak
formulation of \eqref{42.lin1}-\eqref{42.lin3} and using the Poincar\'e
inequality (note that $(u,v_0,\vec{v})$ are bounded functions), 
we obtain for some constant $C>0$,
$$
  \|F(\rho,\nu_0,\vec{\nu};\sigma)\|_{H^1(\dom;\R^5)}
	\le C\big(\|(n_0^0,W_0^0,\vec{W}^0)\|_{L^2(\dom;\R^5)}
	+ \|(\rho,\nu_0,\vec{\nu})\|_{L^2(\dom;\R^5)}\big).
$$
Standard arguments show that $F$ is continuous. Then the Sobolev embedding 
$H^1(\dom)\hookrightarrow L^2(\dom)$ implies that $F$ is compact.
Moreover, $F(\cdot;0)$ is constant.

It remains to derive uniform a priori estimates for all fixed points of 
$F(\cdot;\sigma)$. Let $(\rho,\nu_0,\vec{\nu})\in L^2(\dom;\R^5)$ be such a fixed 
point. Then $(\rho,\nu_0,\vec{\nu})\in H^1(\dom;\R^5)$ and $u=\rho \nu_0$. 
Employing $u-u^D$, $v_0-v_0^D$ as test functions in the weak formulation of 
\eqref{42.lin1}-\eqref{42.lin2}, respectively, and the Poincar\'e
inequality, we find that
$$
  \|u\|_{H^1(\dom)} + \|v_0\|_{H^1(\dom)} 
	\le ch^{-1/2}\big(\|n_0^0\|_{L^2(\dom)} + \|W_0^0\|_{L^2(\dom)}\big),
$$
where here and in the following, $c>0$ denotes a generic constant independent
of the solutions (and of $\eps$).
Similarly, with the test function $\vec{v}-\vec{v}^D$ in the weak formulation of 
\eqref{42.lin3}, using the nonnegativity of $\nu_0$ and $\mu$,
$$
  \|\vec{v}\|_{H^1(\dom)} \le Ch^{-1/2}\|\vec{W}^0\|_{L^2(\dom)}.
$$
These estimates provides the uniform bound in $L^2(\dom;\R^5)$ for all
fixed points of $F(\cdot,\sigma)$. By the Leray-Schauder fixed-point theorem, 
we infer the existence of a weak solution to \eqref{42.eps1}-\eqref{42.eps3}.

{\em Step 4: removing the truncation.} We prove that that there exists
a positive lower bound for $u/v_0$ which is independent of $\eps$.
As a consequence, the truncation in \eqref{42.eps1}-\eqref{42.eps3} can
be removed for sufficiently small values of $\eps>0$, giving a solution
to \eqref{12.n0}-\eqref{12.vecw}.

We choose $\eps:=\min\{\inf_{\dom}(W_0^0/n_0^0),(\sup_\dom(u^D/v_0^D))^{-1}\}$, 
which is positive by assumption,
and define $\phi(z)=\max\{0,z-1/\eps\}$. We use the (admissible) test functions
$v_0\phi(u/v_0)$, $u\phi(u/v_0)$ in \eqref{42.eps1}, \eqref{42.eps2},
respectively, and take the difference of the resulting equations:
\begin{align}
  \int_{\dom} & \bigg(\lambda\big([u/v_0]_\eps,v_+,v_-\big)
	- \frac32[u/v_0]_\eps\bigg)uv_0\phi(u/v_0)dx \nonumber  \\
	&\phantom{xx}{}+ h\int_{\dom}\big(\na(v_0\phi(u/v_0))\cdot\na u
	- \na(u\phi(u/v_0))\cdot\na v_0\big)dx \label{42.aux}\\
	&= \int_{\dom}(v_0n_0^0-uW_0^0)\phi(u/v_0)dx. \nonumber
\end{align}
By \eqref{42.lamu}, the first integral on the left-hand side can be estimated from
below,
$$
  \int_{\dom} \bigg(\lambda\big([u/v_0]_\eps,v_+,v_-\big)
	- \frac32[u/v_0]_\eps\bigg)uv_0\phi(u/v_0)dx 
	\ge \int_{\dom}[u/v_0]_\eps uv_0\phi(u/v_0)dx \ge 0.
$$
The second integral on the left-hand side of \eqref{42.aux} is nonnegative since
\begin{align*}
  \int_{\dom} & \big(\na(v_0\phi(u/v_0))\cdot\na u
	- \na(u\phi(u/v_0))\cdot\na v_0\big)dx \\
	&= \int_{\dom}\big(v_0\na\phi(u/v_0)\cdot\na u - u\na\phi(u/v_0)\cdot\na v_0
	\big)dx \\
	&= \int_{\dom}(v_0\na u-u\na v_0)\cdot\na(u/v_0)\phi'(u/v_0)dx
	= \int_{\dom}v_0^2|\na(u/v_0)|^2\phi'(u/v_0)dx \ge 0.
\end{align*}
Finally, because of $\phi(u/v_0)=0$ if $v_0/u\le\eps$ and $\eps\le W_0^0/n_0^0$, 
the integral on the right-hand side of \eqref{42.aux} becomes
\begin{align*}
  \int_{\dom}(v_0n_0^0-uW_0^0)\phi(u/v_0)dx
	&= \int_{\dom}un_0^0\bigg(\frac{v_0}{u}-\frac{W_0^0}{n_0^0}\bigg)\phi(u/v_0)dx \\
	&\le \int_{\dom}un_0^0\bigg(\eps-\frac{W_0^0}{n_0^0}\bigg)\phi(u/v_0)dx \le 0.
\end{align*}
Therefore, \eqref{42.aux} implies that
$$
  0\le \int_{\dom}uv_0\phi(u/v_0)dx\le 0,
$$
from which we deduce that $\phi(u/v_0)=0$ a.e.\ in $\dom$ and consequently,
$u/v_0\le 1/\eps$ a.e.\ in $\dom$. Hence, $[u/v_0]_\eps=u/v_0$ and we can remove
the truncation in \eqref{42.eps1}-\eqref{42.eps3}.

{\em Step 5: proof of $v_0>|\vec{v}|$ in $\dom$.}
More precisely, we show that $(1-\delta)v_0-|\vec{v}|\ge 0$ for
$$
  0 < \delta < \min\bigg\{1-\sup_{\dom}\frac{|\vec{W}^0|}{W_0^0},
	1-\sup_\dom\frac{|\vec{v}^D|}{v_0^D},\bigg(\frac16\bigg)^{7/5}\bigg\}.
$$
Note that such a choice is possible because of our assumptions.
To prove the claim, we use $w:=\min\{0,(1-\delta)v_0-|\vec{v}|\}$ and
$\vec{w}:=w\vec{v}/|\vec{v}|$ as test functions
in the weak formulations of \eqref{42.eps2} and \eqref{42.eps3}, respectively.
Note that, since $v_0$ is strictly positive, $\vec{w}$ vanishes in a neighborhood
of $\vec{v}=0$, so $\vec{w}\in H^1(\dom)$. By definition of $\delta$, it holds that
$w=0$ on $\pa \dom$, so $w$, $\vec{w}\in H_0^1(\dom)$.
We find that
\begin{align*}
  \frac32\int_{\dom}uw dx + h\int_{D}\na v_0\cdot\na w dx
	&= \int_{\dom}W_0^0wdx, \\
	\frac32\bigg(1+\frac{h}{\tau_{\rm sf}}\bigg)\int_{\dom}
	\frac{v_+^{5/7}-v_-^{5/7}}{v_+^{5/7}+v_-^{5/7}}uwdx
	+ h\int_{\dom}\na\vec{v}\cdot\na\vec{w}dx
	&= \int_{\dom}\vec{W}^0\cdot\vec{w}dx.
\end{align*}
We take the difference between the first equation, multiplied by $1-\delta$,
and the second equation:
\begin{align}
  \int_{\dom} & \bigg((1-\delta)-\frac32\bigg(1+\frac{h}{\tau_{\rm sf}}\bigg)
	\frac{v_+^{5/7}-v_-^{5/7}}{v_+^{5/7}+v_-^{5/7}}\bigg)uw dx \label{42.aux2} \\
	&{}+ h\int_{\dom}\big((1-\delta)\na v_0\cdot\na w - \na\vec{v}\cdot\na\vec{w}\big)dx
	= \int_{\dom}\bigg((1-\delta)W_0^0 - \frac{\vec{v}}{|\vec{v}|}\cdot\vec{W}^0
	\bigg) w dx. \nonumber
\end{align}
We deduce from the definition of $\delta$ that for any $z\ge 1-\delta$,
$$
  \frac{(1+z)^{5/7}-\max\{0,1-z\}^{5/7}}{(1+z)^{5/7}+\max\{0,1-z\}^{5/7}}
	= 1 - \frac{2\max\{0,1-z\}^{5/7}}{(1+z)^{5/7}+\max\{0,1-z\}^{5/7}}
	\ge 1 - 2\delta^{5/7} > \frac23.
$$
Thus, since $v_\pm=\max\{0,v_0\pm|\vec{v}|\}$ and taking $z=|\vec{v}|/v_0\ge
1-\delta$ on $\{w\le 0\}$, the first integral on the
left-hand side of \eqref{42.aux2} is estimated as
\begin{align*}
  \int_{\dom} & \bigg((1-\delta)-\frac32\bigg(1+\frac{h}{\tau_{\rm sf}}\bigg)
	\frac{v_+^{5/7}-v_-^{5/7}}{v_+^{5/7}+v_-^{5/7}}\bigg)uw dx \\
	&\ge \int_{\dom}\bigg(1 - \frac32\frac{(1+|\vec{v}|/v_0)^{5/7}
	-\max\{0,1-|\vec{v}|/v_0\}^{5/7}}{(1+|\vec{v}|/v_0)^{5/7}
	+ \max\{0,1-|\vec{v}|/v_0\}^{5/7}}\bigg)uwdx \\
	&\ge -c_\delta\int_{\dom}uwdx
  = -c_\delta\int_{\dom}u\max\{0,(1-\delta)v_0-|\vec{v}|\}dx,
\end{align*}
where $c_\delta=\frac32(1-2\delta^{5/7})-1=\frac12-3\delta^{5/7}>0$.
The second integral on the left-hand side of \eqref{42.aux2} equals
\begin{align*}
  \int_{\dom} & \bigg((1-\delta)\na v_0\cdot w - \na\vec{v}\cdot
	\na\bigg(w\frac{\vec{v}}{|\vec{v}|}\bigg)\bigg)dx \\
	&= \int_{\dom}\bigg((1-\delta)\na v_0\cdot w - \na|\vec{v}|\cdot\na w
	- w\na\vec{v}\cdot\na\frac{\vec{v}}{|\vec{v}|}\bigg)dx \\
	&= \int_{\dom}\bigg(|\na w|^2 - |\vec{v}|w\bigg|\na\frac{\vec{v}}{|\vec{v}|}\bigg|^2
	\bigg)dx \ge 0,
\end{align*}
using the fact that $w\le 0$. Finally, by the definition of $\delta$,
the integral on the right-hand side of \eqref{42.aux2} is nonpositive,
$$
  \int_{\dom} \bigg((1-\delta)W_0^0 - \frac{\vec{v}}{|\vec{v}|}\cdot\vec{W}^0
	\bigg) w dx 
	\le \int_{\dom}\big((1-\delta)W_0^0 - |\vec{W}^0|\big) w dx \le 0.
$$
Summarizing these estimates, \eqref{42.aux2} implies that
$$
  -c_\delta\int_{\dom}u\min\{0,(1-\delta)v_0-|\vec{v}|\}dx 
	= -c_\delta\int_{\dom}uw dx \le 0
$$
and hence, $(1-\delta)v_0-|\vec{v}|\ge 0$ a.e.\ in $\dom$, which proves the claim.
\end{proof}


\section{Numerical experiments}\label{sec.num}

We perform some numerical simulations using the first model 
\eqref{1.et1a.n0}-\eqref{1.et1a.vecn} with the spin polarization matrix,
We consider, as in \cite{PoNe11},
three- and five-layer structures that consist of alternating nonmagnetic and
ferromagnetic layers. Multilayer structures are promising for applications in
micro-sensor and high-frequency devices. In this paper, they serve to illustrate
the solution behavior rather than to model practical devices.

\subsection{Numerical scheme}

We solve equations \eqref{1.et1a.n0}-\eqref{1.et1a.vecn}
on the finite interval $[0,1]$ which is divided in
$m$ equal subintervals $K$ of length $\triangle x=1/m$. The finite-volume method is
employed and the generic unknown $u_K$ is an approximation of the integral
$\int_K udx$. The difference quotient 
$\DD u_{K,\sigma}/(\triangle x):=(u_{K,\sigma}-u_K)/(\triangle x)$ approximates
the gradient of $u$ in the subinterval $K$, where $u_{K,\sigma}$ is the value
in the neighboring element $K'$ such that 
$\overline{K}\cap\overline{K'}=\{\sigma\}$. Then the
flux $J_u=-(\na(uT)+u\na V)$ through the point $\sigma$ can be approximated
by 
\begin{equation}\label{6.J}
  J_{u,K,\sigma}=-\frac{1}{\triangle x}\bigg(\DD(uT)_{K,\sigma}
	+\frac12(u_K+u_{K,\sigma})\DD V_{K,\sigma}\bigg).
\end{equation}
Special care has to be taken for the discretization of the Joule heating term
$\J_n\cdot\na V$. We suggest to approximate it according to
$$
  \int_K \J_n\cdot\na V dx \approx
	\frac{1}{2\triangle x}\sum_{\sigma} \triangle x \J_{n,K,\sigma}\DD V_{K,\sigma},
$$
where the sum is (here and in the following) 
over the two end points of the interval $K$. 
The values $C_K$, $\vec{\Omega}_K$, $p_K$ are given by the
integrals of $C(x)$, $\vec{\Omega}(x)$, $p(x)$ over $K$, respectively, and
the values $\vec{\Omega}_\sigma$, $p_\sigma$ are the arithmetic averages of 
$\vec\Omega$, $p$ in the neighboring subintervals of the intersecting point 
$\sigma$, respectively. Finally, we set
$\eta_\sigma=\sqrt{1-p_\sigma^2}$.

The stationary solution is computed as the limit $t_k=k\triangle t\to\infty$ from
the implicit Euler finite-volume discretization of 
\eqref{1.et1a.n0}-\eqref{1.et1a.vecn}. We solve first the Poisson equation
for the electric potential $V^k$ with the charge density from the previous time
step $k-1$, solve then the moment equations for $(n_0^k,W_0^k,\vec{n}^k)$,
and update finally the temperature. Given 
$(n_{0,K}^{k-1},\vec{n}_K^{k-1},T_K^{k-1})$ and 
$W_{0,K}^{k-1}=\frac32 n_{0,K}^{k-1}T_K^{k-1}$, the numerical scheme reads as
\begin{align*}
  & -\frac{\lambda_D^2}{\triangle x}\sum_\sigma \DD V_{K,\sigma}^k 
	= \triangle x(n_{0,K}^{k-1}-C_K), \\
	& \frac{\triangle x}{\triangle t}(n_{0,K}^k - n_{0,K}^{k-1})
	+ \sum_\sigma \J_{n,K,\sigma}^k = 0, \\
		& \frac{\triangle x}{\triangle t}(W_{0,K}^{k}-W_{0,K}^{k-1})
  + \sum_\sigma \J_{W,K,\sigma}^k + \frac{1}{2\triangle x}\sum_\sigma\triangle x
	\J_{n,K,\sigma}^k \DD V_{K,\sigma}^k = 0, \\
	& \frac{\triangle x}{\triangle t}(\vec{n}_K^k - \vec{n}_K^{k-1})
	+ \sum_\sigma \vec{\J}_{K,\sigma}^k 
	+ \gamma\triangle x(\vec{\Omega}_K\times\vec{n}_K^k)
	= -\frac{\triangle x}{\tau_{\rm sf}}\vec{n}_K^k, \\
	& T_K^k = \frac23 \frac{W_{0,K}^{k}}{n_{0,K}^k},
\end{align*}
and the discrete fluxes are defined by
\begin{align*}
  \J_{n,K,\sigma}^k &= -D_0\eta^{-2}_\sigma\big(J_{n,K,\sigma}^k 
	- p_\sigma\vec{\Omega}_\sigma\cdot\vec{J}_{n,K,\sigma}^k\big), \\
	\J_{W,K,\sigma}^k &= -\frac53 D_0\eta_\sigma^{-2}\big(J_{W,K,\sigma}^k
	- p\vec{\Omega}_\sigma\cdot\vec{J}_{W,K,\sigma}\big), \\
	\vec\J_{K,\sigma}^k &= -D_0\eta_\sigma^{-2}
	\big(-p_\sigma\vec{\Omega}_\sigma J_{n,K,\sigma}^k
	+ (1-\eta_\sigma)\vec{\Omega}_\sigma\otimes\vec{\Omega}_\sigma
	\cdot\vec{J}_{n,K,\sigma}^k + \eta_\sigma\vec{J}_{n,k,\sigma}^k\big), 
\end{align*}
and the fluxes $J_{n,K,\sigma}^k$, $\J_{W,K,\sigma}^k$, and
$\vec{J}_{K,\sigma}^k$ are discretized according to \eqref{6.J} with the
exception that the temperature and the densities in the drift term are 
explicit, i.e.
$$
  J_{u,K,\sigma}^k = -\frac{1}{\triangle x}\bigg(\DD(u^kT^{k-1})_{K,\sigma}
	+\frac12(u_K^{k-1}+u^{k-1}_{K,\sigma})\DD V^k_{K,\sigma}\bigg).
$$
Note that we have introduced the scaled diffusion coefficient $D_0$ and the
parameter $\gamma$, which come from the scaling of the equations. 
The values are $D_0\approx 6.9\cdot 10^{-4}$ and $\gamma=4$.
The scaled Debye length equals $\lambda_D\approx 1.2\cdot 10^{-4}$.
We have chosen the (scaled) boundary conditions 
$n_0=1$, $\vec{n}=0$, and $V=V_D$ at $x=0,1$ with $V_D(0)=0$ and $V_D(1)=U/U_T$.
Here, $U_T=0.026\,$V is the thermal voltage at room temperature.

The discrete linear system is solved for each time step $k$ until the maximum norm
of the difference between two consecutive solutions is smaller than a predefined
threshold ($10^{-8}\ldots 10^{-10}$). This solution is considered as a steady state.
The numerical parameters are $\triangle x=0.003$, $\triangle t=5\cdot 10^{-4}\ldots
10^{-3}$, and the (unscaled) physical parameters are $D=10^{-3}\,$m$^2$s$^{-1}$
(diffusion coefficient), $\tau_{\rm sf}=10^{-12}\,$s, and $U=-1\,$V (applied
bias). 


\subsection{Three-layer structure}\label{sec.3}

As the first numerical experiment, we consider a three-layer structure
which consists of a nonmagnetic layer sandwiched between two ferromagnetic layers;
see Figure \ref{fig.3}. This structure may be regarded as a diode with ferromagnetic 
source and drain regions. The length of the diode is $L=1.2\,\mu$m, the
ferromagnetic layers have length $\ell=0.2\,\mu$m, and the doping concentrations
are $C=10^{23}$\,m$^{-3}$ in the highly doped regions and 
$C=4\cdot 10^{20}$\,m$^{-3}$ in the lowly doped region.

\begin{figure}[ht]
\centering\includegraphics[width=0.6\textwidth]{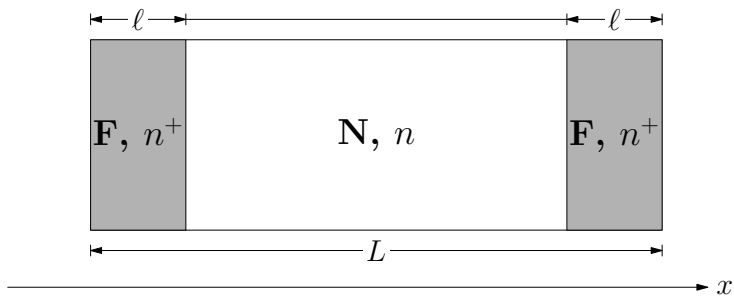}
\caption{Geometry of the three-layer structure with ferromagnetic ({\bf F}) 
highly doped ($n^+$) source and drain regions and nonmagnetic ({\bf N}) 
lowly doped ($n$) channel region.}
\label{fig.3}
\end{figure}

The local magnetization in the side regions is aligned with the $z$-axis
(orthogonal to the diode), $\vec{\Omega}(x)=(0,0,1)^\top$ for 
$x\in[0,\ell]\cup[L-\ell,L]$ and $\vec\Omega(x)=0$ else. 
The polarization in the ferromagnetic regions equals $p=0.66$. 

Figure \ref{fig.3n} shows the stationary charge density $n_0$ (left panel)
and the spin density $\vec{n}=(0,0,n_3)$ (right panel), 
compared with the solution to the corresponding
spinorial drift-diffusion model (with constant temperature). As expected, the
charge densities are similar with some small differences close to the junction of
the drain region. The spin component $n_3$ exhibits some peaks around the
junctions which can be explained by the discontinuity of $p(x)$ (and hence 
$\eta(x)$) at the junctions \cite[Sec.~8.1]{PoNe11}. The peaks are smaller
in the energy-transport model which may be due to thermal diffusion. 

\begin{figure}[ht]
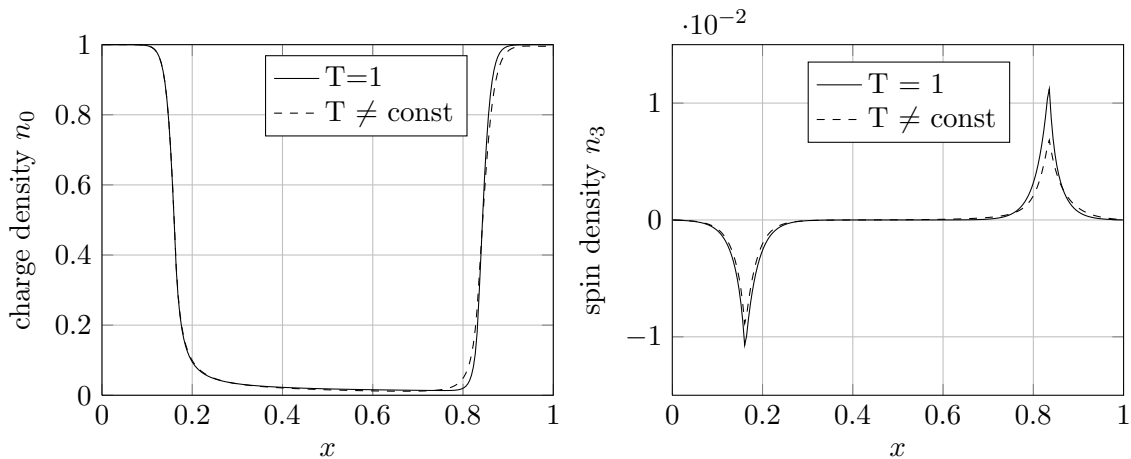

\centering\small
\input{n0_fnf.tikz}\input{n3_fnf.tikz}
\caption{Charge density $n_0$ (left) and spin density $n_3$ (right) in the
three-layer structure computed from the spin energy-transport model 
($T\neq\mbox{const.}$) and from the corresponding spin drift-diffusion model
($T=1$).}
\label{fig.3n}
\end{figure}

The temperature for different values of the polarization $p$ is illustrated
in Figure \ref{fig.3T}. The case $p=0$ corresponds to a nonmagnetic diode.
The temperature maximum increases with $p$ but the temperature decreases with $p$
in the drain region. Possibly, higher values of $p$ lead to stronger heat fluxes
increasing the temperature in the channel region.

\begin{figure}[ht]
\centering\small\input{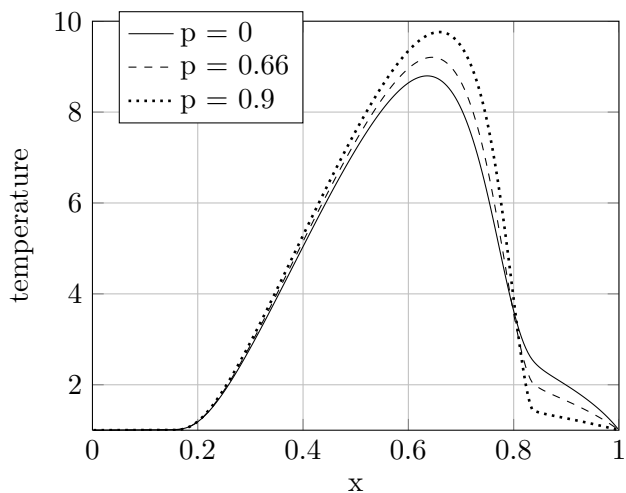}
\caption{Temperature in the three-layer structure for various 
polarizations $p$.}
\label{fig.3T}
\end{figure}


\subsection{Five-layer structure}

The five-lyer structure is composed of two ferromagnetic layers sandwiched between
two nonmagnetic layers and separated by a thin nonmagnetic layer in the middle
of the structure; see Figure \ref{fig.5}. 
The choice of the lengths $L$ and $\ell$ and of the doping
concentrations is as in Subsection \ref{sec.3}. The middle region has the
thickness $d=L/21\approx 60\,$nm. Again we take $p=0.66$. The local
magnetization is different in the two layers:
$\vec\Omega(x)=(0,0,1)^\top$ for $x\in[L/6,10L/21]$, 
$\vec\Omega(x)=(0,1,0)^\top$ for $x\in[11L/21,5L/6]$, and
$\vec\Omega(x)=0$ else. 

\begin{figure}[ht]
\includegraphics[width=0.6\textwidth]{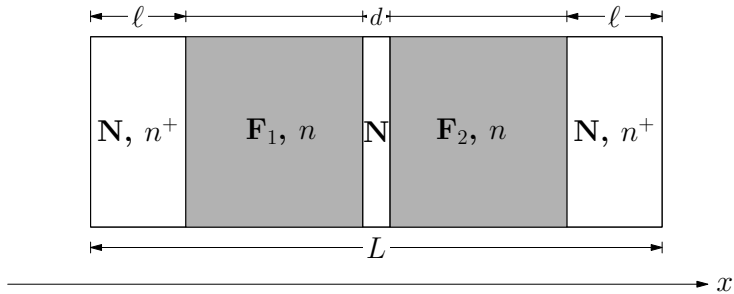}
\caption{Geometry of the five-layer structure with 
ferromagnetic ({\bf F}$_1$, {\bf F}$_2$) lowly doped ($n$) regions and
nonmagnetic ({\bf N}) regions. The source and drain regions are highly doped $(n^*$),
while the middle region is lowly doped.}
\label{fig.5}
\end{figure}

The effect of the temperature is now stronger than in the three-layer structure.
The charge density $n_0$ and temperature $T$ are presented
in Figure \ref{fig.5nT}. The interplay of the charge and spin densities in the
nonmagnetic middle region causes a small hump in $n_0$
and a more significant increase before the drain junction, compared to
Figure \ref{fig.3n} (left). The hump is larger when the electric potential is
a linear function and the temperature is constant; see Figure 3 in \cite{PoNe11}.
The temperature maximum decreases with $p$,
opposite to the situation in the three-layer structure. We observe that
the polarization strongly influences the temperature.
When $p=0$, we obtain the same curve as in Figure \ref{fig.3T} since
this describes the same nonmagnetic diode.

\begin{figure}[ht]
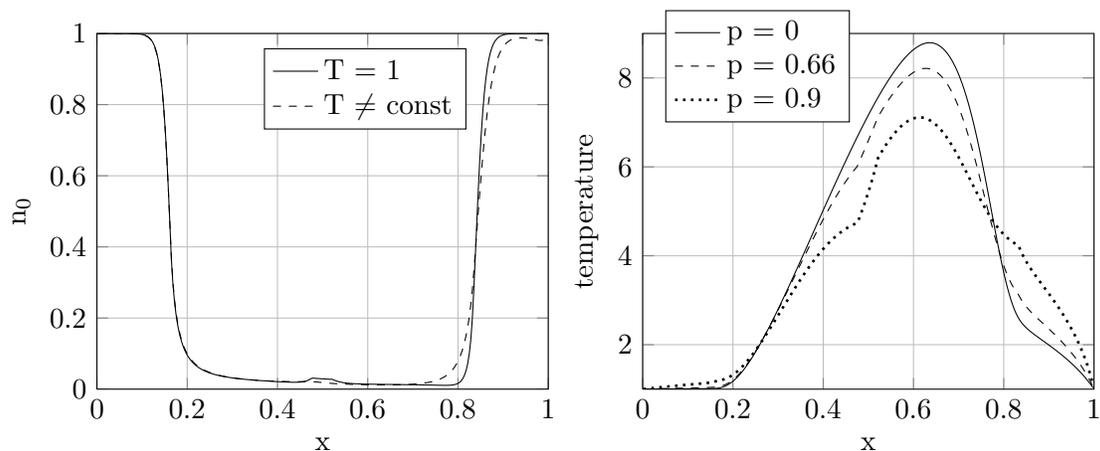

\centering\small
\input{n0_5layers.tikz}\input{temper_5layers_3.tikz}
\caption{Charge density $n_0$ (left) and temperature $T$ (right) 
in the five-layer structure.}
\label{fig.5nT}
\end{figure}

In contrast to the three-layer structure, all components of the spin vector
density are nonzero. However, the component $n_1$ is relatively small. We
present the remaining components $n_2$ and $n_3$ in Figure \ref{fig.5n23}.
The temperature causes a significant smoothing of the peaks between the
magnetic/nonmagnetic junctions.

\begin{figure}[ht]
\centering\small\input{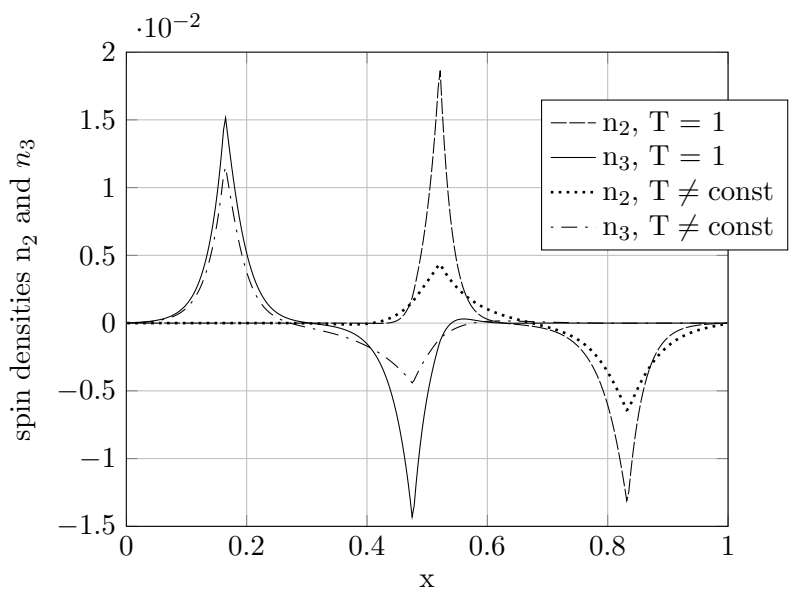}
\caption{Spin density components $n_2$ and $n_3$ in the
five-layer structure computed from the spin energy-transport model 
($T\neq\mbox{const.}$) and from the corresponding spin drift-diffusion model
($T=1$).}
\label{fig.5n23}
\end{figure}


\end{document}